\documentclass[12pt,a4paper]{article}

\usepackage{graphicx}
\usepackage{subfig}
\usepackage{amsmath,amsthm,amssymb}
\usepackage{esint}
\usepackage{mathrsfs}
\usepackage{appendix}

\usepackage{hyperref}

\usepackage[left=2.2cm,right=2.2cm,top=3cm,bottom=3.5cm]{geometry}

\usepackage{color}
\usepackage{array}
\usepackage{url}

\usepackage{float}

\DeclareMathOperator{\supp}{supp}

\newtheorem{theorem}{Theorem}[section]
\newtheorem*{theorem*}{Theorem}
\newtheorem{lemma}[theorem]{Lemma}
\newtheorem{corollary}[theorem]{Corollary}
\newtheorem{proposition}[theorem]{Proposition}

\theoremstyle{definition}
\newtheorem{definition}[theorem]{Definition}

\theoremstyle{remark}
\newtheorem{remark}[theorem]{Remark}
\usepackage{authblk}

\usepackage{mathtools}
\usepackage{amssymb}
\usepackage{appendix}

\numberwithin{equation}{section}

\usepackage[numbers,sort&compress]{natbib}

% === COMMANDS ======================

%\newcommand{\Div}{{\rm div}}

\DeclareMathOperator{\Div}{div}

\allowdisplaybreaks

\begin{document}
	
	\title{On the conservation of helicity by weak solutions of the 3D Euler and inviscid MHD equations}

	\author[]{Daniel W. Boutros\footnote{Department of Applied Mathematics and Theoretical Physics, University of Cambridge, Cambridge CB3 0WA UK. Email: \textsf{dwb42@cam.ac.uk}} \space and Edriss S. Titi\footnote{Department of Mathematics, Texas A\&M University, College Station, TX 77843-3368, USA; Department of Applied Mathematics and Theoretical Physics, University of Cambridge, Cambridge CB3 0WA UK; also Department of Computer Science and Applied Mathematics, Weizmann Institute of Science, Rehovot 76100, Israel. Emails: \textsf{titi@math.tamu.edu} \; \textsf{Edriss.Titi@maths.cam.ac.uk}}}

	\date{July 31, 2025} 
	
	\maketitle
	
\begin{abstract}
Classical solutions of the three-dimensional Euler equations of an ideal incompressible fluid conserve the helicity. We introduce a new weak formulation of the vorticity formulation of the Euler equations in which (by implementing the Bony paradifferential calculus) the advection terms are interpreted as paraproducts for weak solutions with low regularity. Using this approach we establish an equation of local helicity balance, which gives a rigorous foundation to the concept of local helicity density and flux at low regularity. We provide a sufficient criterion for helicity conservation which is weaker than many of the existing sufficient criteria for helicity conservation in the literature.

Subsequently, we prove a sufficient condition for the helicity to be conserved in the zero viscosity limit of the Navier-Stokes equations. Moreover, we establish a relation between the defect measure (which is part of the local helicity balance) and a third-order structure function for solutions of the Euler equations. As a byproduct of the approach introduced in this paper, we also obtain a new sufficient condition for the conservation of magnetic helicity in the inviscid MHD equations, as well as for the kinematic dynamo model. 

Finally, it is known that classical solutions of the ideal (inviscid) MHD equations which have divergence-free initial data will remain divergence-free, but this need not hold for weak solutions. We show that weak solutions of the ideal MHD equations arising as weak-$*$ limits of Leray-Hopf weak solutions of the viscous and resistive MHD equations remain divergence-free in time. 
\end{abstract}

\noindent \textbf{Keywords:} Incompressible Euler equations, helicity conservation, Onsager's conjecture, structure functions, helical turbulence, inviscid limit, magnetic helicity, MHD equations, kinematic dynamo model 

\vspace{0.1cm} \noindent \textbf{Mathematics Subject Classification:} 35Q35 (primary), 35Q31, 76F02, 76B99, 76W05, 35D30 (secondary)

\section{Introduction}
Classical solutions of the equations of motion of inviscid incompressible flows possess several conserved quantities, such as the energy and the helicity. In this work, we focus on establishing sufficient conditions for the regularity of weak solutions to conserve the helicity. We recall that the Euler equations of an ideal incompressible fluid subject to periodic boundary conditions on the three-dimensional torus $\mathbb{T}^3$ are given by
\begin{equation} \label{eulerequation}
\partial_t u + \nabla \cdot (u \otimes u) + \nabla p = 0, \quad \nabla \cdot u = 0,
\end{equation}
where $u : \mathbb{T}^3 \times (0,T) \rightarrow \mathbb{R}^3$ is the velocity field and $p : \mathbb{T}^3 \times (0,T) \rightarrow \mathbb{R}$ is the pressure. By taking the curl of the momentum equation, the Euler equations can be stated in the vorticity formulation, which is
\begin{equation} \label{vorticityequation}
\partial_t \omega + u \cdot \nabla \omega - \omega \cdot \nabla u = 0, 
\end{equation} 
where $\omega = \nabla \times u$ is the vorticity, which is also an incompressible vector field. We recall that the velocity field $u$ can be recovered from the vorticity $\omega$ by means of the Biot-Savart law
\begin{equation}
u = \nabla \times (-\Delta)^{-1} \omega.
\end{equation}
We will assume throughout that $\omega$ is mean-free, such that $\int_{\mathbb{T}^3} \omega dx = 0$. Moreover, we define $(-\Delta)^{-1} \omega$ to be a mean-free function.

For the Euler equations the helicity is given by
\begin{equation}
\mathcal{H} (t) = \int_{\mathbb{T}^3} u (x,t) \cdot \omega (x,t) dx,
\end{equation}
We will write $h(x,t) \coloneqq u(x,t) \cdot \omega(x,t)$ for the helicity density. 
One can check, see for example \cite[~Section 2.1]{derosa}, that classical solutions of the Euler equations conserve the helicity. The concept of helicity as a conserved quantity for inviscid flows was introduced independently in the papers \cite{moffatt} and \cite{moreau}, which pointed out its fundamental topological importance (see also \cite{moffattreview}). 

The helicity describes the knottedness of vortex tubes and vortex lines and is therefore of importance in topological fluid mechanics. In particular, (local) helicity conservation implies that the topological configuration (i.e. the linkage) of vortex lines cannot change in time \cite{davidson}. More generally speaking, it means that the topological properties of the vorticity field are invariant under the dynamics \cite{moffattreview}. 

Helicity is also an important quantity in the study of three-dimensional turbulence, as it is the only other known conserved quantity for inviscid flow in addition to the kinetic energy. One of the effects of helicity dynamics is that it can impede the forward energy cascade, as noted in \cite{moffatt2014} (see also \cite{yan,biferale2012}). 

In a recent work \cite{boutroshydrostatic}, two new types of weak solutions were introduced for the hydrostatic Euler equations (also known as the inviscid primitive equations of oceanic and atmospheric dynamics), in which the nonlinearity was interpreted as a paraproduct in a negative Besov space by using the Bony paradifferential calculus. The existence of a version of these weak solutions to the hydrostatic Euler equations was established in \cite{boutrosnonuniqueness}.

In this work we would like to extend this idea to the problem of helicity conservation. In particular, we are interested in the case where the vorticity lies in negative Sobolev (or Besov) spaces and the velocity field has sufficient regularity such that the nonlinear terms in the vorticity equation \eqref{vorticityequation} can be interpreted as paraproducts. 

In recent years the conservation of kinetic energy by weak solutions of the incompressible Euler equations has received a lot of attention in the literature. It was conjectured by Lars Onsager in \cite{onsager} that there exists a critical regularity threshold for energy conservation. The positive part of this conjecture was investigated and established in \cite{eyink,constantin,duchon,cheskidovconstantin}, while for the negative part there are the works \cite{lellisinclusion,lellisadmissibility,isettproof,buckmasteradmissible} (and see references therein). 

Since the helicity is also a conserved quantity of the Euler equations, it is natural to investigate the sufficient regularity assumptions for weak solutions of the Euler equations to conserve the helicity. In \cite{chae} it was shown that helicity conservation holds if $\omega$ belongs to the space $L^3 ((0,T); B^{1/3+}_{9/5,\infty} (\mathbb{T}^3))$, these results were then improved in \cite{chaeconserved}. In \cite{cheskidovconstantin} it was proven that the helicity is conserved if $u \in L^3 ((0,T); B^{2/3}_{3,c(\mathbb{N})} (\mathbb{T}^3))$ (where $c (\mathbb{N})$ indicates an assumption on the decay rate of the Littlewood-Paley blocks, see \cite{cheskidovconstantin} for more details). This condition was slightly improved in \cite{wanghelicity}.

It is also possible to derive sufficient criteria by making independent regularity assumptions on both the velocity and vorticity fields, which was done in \cite{derosa,wangcompressible}. It was shown in \cite{derosa,liuholder} that under sufficient regularity assumptions on the velocity field, the helicity is H\"older continuous in time (if it is not conserved). The asymptotic behaviour of the helicity for deterministic and statistical solutions of the three-dimensional Navier-Stokes equations was studied in \cite{foias2007,foias2009}. In \cite{berselli2009} a regularity criterion for the three-dimensional Navier-Stokes equations based on the vanishing of the helicity density was established.

In this paper, we will also consider the incompressible ideal magnetohydrodynamic (MHD) equations, which are given by
\begin{align}
&\partial_t u + u \cdot \nabla u - B \cdot \nabla B + \nabla p = 0, \label{MHD1} \\ 
&\partial_t B + u \cdot \nabla B - B \cdot \nabla u = 0, \label{MHD2} \\
&\nabla \cdot u = 0, \label{MHD3}
\end{align}
where $u: \mathbb{T}^3 \times (0,T) \rightarrow \mathbb{R}^3 $ is again the velocity field and $B : \mathbb{T}^3 \times (0,T) \rightarrow \mathbb{R}^3$ is the magnetic field. We recall the definition of the vector potential $A : \mathbb{T}^3 \times (0,T) \rightarrow \mathbb{R}^3$, which is defined by the relation $\nabla \times A = B$ (up to a gradient term $\nabla \varphi$). We then choose $A$ to be divergence-free (which implies $\varphi \equiv 0$ and hence determines $A$ uniquely up to a constant), and we have the relation (again up to a constant)
\begin{equation} \label{potentialequation}
A = \nabla \times (-\Delta)^{-1} B,
\end{equation}
where we assume $B$ to be mean-free, and we fix $(-\Delta)^{-1} B$ to be a mean-free function. We also recall that the inviscid and irresistive MHD equations in potential form are given by
\begin{align}
&\partial_t A + B \times u + \nabla \tilde{\phi} = 0, \label{mhdpotential1} \\
&\partial_t u + u \cdot \nabla u - B \cdot \nabla B + \nabla p = 0, \label{mhdpotential2} \\
&\nabla \cdot u = \nabla \cdot A = 0, \label{mhdpotential3}
\end{align}
where $\tilde{\phi}$ is a gauge term that enforces the incompressibility condition for the vector potential $A$. We note that for strong solutions of the ideal MHD equations (with divergence-free magnetic field), the formulations \eqref{MHD1}-\eqref{MHD3} respectively \eqref{mhdpotential1}-\eqref{mhdpotential3} are equivalent.

In particular, we require that $\nabla \cdot B \lvert_{t= 0} = 0$. We recall that the divergence-free constraint for the magnetic field describes the absence of magnetic monopoles \cite{goedbloed}. For classical solutions, the divergence-free property of the magnetic field is preserved by the evolution given in equation \eqref{MHD2}. However, this need not be true for weak solutions, as $\Div B$ satisfies a transport equation where the vector field has low regularity (and therefore uniqueness of weak solutions of the transport equation does not hold in general). We will address this matter in more detail in Section \ref{divergencesection}. 

We recall that the MHD system has three formally conserved quantities (under the assumption $\nabla \cdot B = 0$), namely:
\begin{enumerate}
    \item The total energy
    \begin{equation}
    \mathcal{E} (t) = \int_{\mathbb{T}^3} \big( \lvert u(x,t) \rvert^2 + \lvert B(x,t) \rvert^2 \big) dx.
    \end{equation}
    \item The cross helicity
    \begin{equation}
    \mathcal{H}_{u,B} (t) = \int_{\mathbb{T}^3} u(x,t) \cdot B(x,t) dx.
    \end{equation}
    \item The magnetic helicity
    \begin{equation} \label{magnetichelicity}
    \mathcal{H}_{B,B} = \int_{\mathbb{T}^3} A(x,t) \cdot B(x,t) dx. 
    \end{equation}
\end{enumerate}
We observe that the magnetic helicity is independent of the choice of the vector potential $A$. The inclusion of an additional gradient term $\nabla \varphi$ in the vector potential leads to the following extra term in the magnetic helicity (when $\varphi$ is suitably smooth)
\begin{equation*}
\int_{\mathbb{T}^3} \nabla \varphi (x,t) \cdot B(x,t) dx = 0,
\end{equation*}
because of the divergence-free property of the magnetic field.

One can consider analogues of the Onsager conjecture for each of these three conserved quantities. In this work, we will only consider the magnetic helicity. As was said before, the derivation of these conservation laws relies on the magnetic field being divergence-free. For the sufficient condition for magnetic helicity conservation we derive in this paper, we will assume that $B$ is weakly divergence-free.

In \cite{caflisch} it was shown that under sufficient Besov regularity, weak solutions of the ideal MHD system conserve energy, these results were subsequently refined by \cite{kang} using the method of \cite{cheskidovconstantin} (see also \cite{yu}).  Sufficient conditions for the conservation of magnetic helicity were found in \cite{caflisch,kang} (see also \cite{aluie,faracobounded}). Sufficient conditions for the conservation of cross helicity were established in \cite{kang,yu}. In \cite{faracotaylor} Taylor's conjecture was proven, which states that weak-$*$ limits of Leray-Hopf weak solutions of the viscous and resistive MHD equations conserve magnetic helicity (see also \cite{faracobounded}). For each of these results it was assumed but not established that $\nabla \cdot B = 0$ in a weak sense.

Using the method of convex integration, examples of weak solutions that violate the three conservation laws were constructed in \cite{beekie}. In \cite{faracobounded} weak solutions were found which conserve magnetic helicity, but did not conserve energy and cross helicity. Further results on the nonuniqueness of weak solutions can be found in \cite{limhd,linonuniqueness}.

We now give an overview of the results in this paper. In Section \ref{eulersection} we consider the Euler equations. We establish an equation of local helicity balance under some regularity assumptions. This balance contains a defect measure, which captures the potential lack of regularity of the solution as well as the helicity flux. 

We then proceed to prove a sufficient condition for the defect measure to be zero and hence the helicity to be conserved. Subsequently we prove that several of the sufficient criteria for helicity conservation in the literature imply that the sufficient condition introduced in this paper is satisfied, and therefore this new condition is stronger.

In Section \ref{inviscidlimitsection} we prove a sufficient condition for the helicity to be conserved in the zero viscosity limit (of the Navier-Stokes equations). In Section \ref{triplecorrsection} we establish a relation between a third-order structure function and the defect measure found in Section \ref{eulersection}. 

In Section \ref{MHDsection} the MHD equations are studied, in particular a new sufficient condition for conservation of the magnetic helicity is obtained. These results are extended to the kinematic dynamo model in Section \ref{dynamosection}. In Section \ref{divergencesection}, we prove the global existence of Leray-Hopf weak solutions for the viscous and resistive MHD equations, for general $L^2 (\mathbb{T}^3)$ initial data for the magnetic field (which need not be divergence-free). 

We then show that for such Leray-Hopf weak solutions, if the magnetic field is divergence-free initially, it will remain divergence-free. Finally, we show that weak solutions of the ideal MHD equations arising as weak-$*$ vanishing viscosity and resistivity limits of Leray-Hopf solutions with divergence-free initial data, will also have the property that the magnetic field will remain divergence-free if it is initially divergence-free. We note that the preservation of the divergence-free condition for the magnetic field could serve as a (partial) selection/ruling out criterion for physically relevant weak solutions of the ideal MHD system (in a similar spirit as in \cite{bardos2012,bardos2013,szekelyhidi2011}), and the results of this paper show that this criterion is satisfied in the vanishing viscosity limit.

Some concluding remarks are provided in Section \ref{conclusion}. In Appendix \ref{paradifferentialappendix} some inequalities from paradifferential calculus are stated that will be used in this paper, for the convenience of the reader. For some inequalities we will provide a proof for the sake of completeness, as we could not locate them in the literature in the required form.

In this paper we will use the Einstein summation convention. We will often use Besov spaces, which are denoted by $B^s_{p,q} (\mathbb{T}^3)$. Moreover, throughout we will use the convention that $f \in H^{s+} (\mathbb{T}^3)$ means that there exists an $\epsilon > 0$ such that $f \in H^{s + \epsilon} (\mathbb{T}^3)$. We will denote spatial duality brackets by $\langle \cdot, \cdot \rangle$ (so without subscript), while temporal and spatial duality brackets will be denoted by $\langle \cdot , \cdot \rangle_{x,t}$. Finally, $C$ will denote a numerical constant which may change value from line to line. 

\section{The incompressible Euler equations} \label{eulersection}
We start by introducing a new weak formulation of the Euler equations in the vorticity form, such solutions will be referred to as functional vorticity solutions. We introduce this new terminology to contrast it with the usual distributional solutions of the vorticity equation, as in this case the vorticity does not need to lie in an $L^p (\mathbb{T}^3)$ space but is assumed to have negative Sobolev regularity.
\begin{definition} \label{functionalvorticity}
We call a pair $(u, \omega)$ a functional vorticity solution of the Euler equations \eqref{vorticityequation} (or \eqref{eulerequation}) if $u \in L^\infty ((0,T) ; H^{1/2+} (\mathbb{T}^3))$ and it satisfies the following equation for all $\psi \in \mathcal{D} ( \mathbb{T}^3 \times (0,T))$
\begin{align} \label{functionalvorticityeq}
&\int_0^T \langle \omega, \partial_t \psi \rangle dt + \int_0^T \big\langle u \otimes \omega - \omega \otimes u , \nabla \psi \big\rangle dt = 0. 
\end{align}
Here $\langle \cdot, \cdot \rangle$ denote the spatial distributional duality brackets. Moreover, $\omega = \nabla \times u$ and $\nabla \cdot u = \nabla \cdot \omega = 0$ in the distributional sense. 
\end{definition}
\begin{remark}
The notion of a functional vorticity solution is well-defined because we can interpret the duality brackets as duality brackets between $B^{-1/2}_{1,2} (\mathbb{T}^3)$ and $B^{1/2+\theta}_{\infty,2} (\mathbb{T}^3)$ for some small $\theta > 0$. This is done by using Bony's paradifferential calculus from which it follows that $u \otimes \omega \in L^\infty ((0,T); B^{-1/2}_{1,2} (\mathbb{T}^3))$, see Lemma \ref{paraproductlemma} for more details. \\
We note that in order to obtain a well-defined concept of solution, it would have been sufficient to require $u \in L^2 ((0,T); H^{1/2+} (\mathbb{T}^3))$. However, such regularity is not sufficient to make sense of the (global) helicity pointwise (almost everywhere) in time, which is needed to prove a sufficient condition for helicity conservation. Moreover, observe that we have not specified how the initial data is attained for functional vorticity solutions, but this can be done by requiring the attainment of the initial data (for $u$) in $H^{1/2+} (\mathbb{T}^3)$ in the strong sense.
\end{remark}
We recall that the (standard) weak formulation of the Euler equations is given by (for divergence-free test functions $\zeta \in \mathcal{D} (\mathbb{T}^3 \times (0,T);\mathbb{R}^3)$)
\begin{equation} \label{weakformulationeuler}
\int_0^T \int_{\mathbb{T}^3} u \partial_t \zeta dx dt + \int_0^T \int_{\mathbb{T}^3} \big[ (u \otimes u) : \nabla \zeta \big] dx dt = 0,
\end{equation}
where $u \in L^\infty ((0,T); L^2 (\mathbb{T}^3))$. 
We first show that if $u \in L^\infty ((0,T); H^{1/2+} (\mathbb{T}^3))$ then the definition of a functional vorticity solution is equivalent to the standard definition of a weak solution to the Euler equations. Before we prove the equivalence, let us first fix some notation. We let $\phi : \mathbb{R}^3 \rightarrow \mathbb{R}$ be a standard radial $C^\infty$ compactly supported mollifier such that $\int_{\mathbb{R}^3} \phi dx = 1$. We then define
\begin{equation}
\phi_\epsilon (x) \coloneqq \frac{1}{\epsilon^3} \phi \bigg( \frac{x}{\epsilon} \bigg).
\end{equation}
Moreover, we introduce the notation $u^\epsilon \coloneqq u * \phi_\epsilon$. 
\begin{proposition} \label{funcvorticityprop}
Let $u \in L^\infty ((0,T); H^{1/2+} (\mathbb{T}^3))$ be a weak solution of the incompressible Euler equations \eqref{eulerequation}. Then $u$ and its associated vorticity $\omega \in L^\infty ((0,T); H^{-1/2+} (\mathbb{T}^3))$ satisfy the weak formulation in equation \eqref{functionalvorticityeq}, i.e. $(u, \omega)$ is a functional vorticity solution.
\end{proposition}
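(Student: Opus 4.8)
The plan is to exploit the fact that the vorticity formulation \eqref{vorticityequation} is the curl of the momentum equation \eqref{eulerequation}, which at the level of weak formulations means testing the standard formulation \eqref{weakformulationeuler} against the specific test field $\zeta = \nabla\times\psi$. For any vector-valued $\psi\in\mathcal{D}(\mathbb{T}^3\times(0,T);\mathbb{R}^3)$ this $\zeta$ is smooth, compactly supported in time, and divergence-free (since $\nabla\cdot(\nabla\times\psi)=0$), hence an admissible test function in \eqref{weakformulationeuler}. The goal is then to show that the two terms obtained after this substitution coincide, after integration by parts, with the two terms of \eqref{functionalvorticityeq}.

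The time term is immediate. Substituting $\zeta=\nabla\times\psi$ gives $\int_0^T\int_{\mathbb{T}^3} u\cdot\partial_t(\nabla\times\psi)\,dx\,dt = \int_0^T\int_{\mathbb{T}^3} u\cdot(\nabla\times\partial_t\psi)\,dx\,dt$, and since $\partial_t\psi$ is smooth one may transfer the curl onto $u$ using the (formal) self-adjointness of the curl on the torus, $\int_{\mathbb{T}^3} u\cdot(\nabla\times v)\,dx = \int_{\mathbb{T}^3}(\nabla\times u)\cdot v\,dx$, which is nothing but the definition of the distributional curl paired against the smooth field $\partial_t\psi$. This yields exactly $\int_0^T\langle\omega,\partial_t\psi\rangle\,dt$, the first term of \eqref{functionalvorticityeq}, and the step is fully rigorous at the regularity $u\in L^\infty((0,T);H^{1/2+})$, $\omega\in L^\infty((0,T);H^{-1/2+})$.

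The nonlinear term is the crux. I must identify $\int_0^T\int_{\mathbb{T}^3}(u\otimes u):\nabla(\nabla\times\psi)\,dx\,dt$ with the paraproduct term $\int_0^T\langle u\otimes\omega-\omega\otimes u,\nabla\psi\rangle\,dt$ of \eqref{functionalvorticityeq} (up to the overall sign fixed by the index conventions, which is immaterial since the identity is an equation to zero). For smooth fields this is a routine integration by parts: using $\nabla\cdot u=\nabla\cdot\omega=0$ together with the identity $\nabla\times(\omega\times u)=(u\cdot\nabla)\omega-(\omega\cdot\nabla)u$, one checks directly that $\int(u\otimes u):\nabla(\nabla\times\psi) = -\langle u\otimes\omega-\omega\otimes u,\nabla\psi\rangle$. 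To make this rigorous at low regularity I would mollify, setting $u^\epsilon = u*\phi_\epsilon$ and $\omega^\epsilon = \nabla\times u^\epsilon = \omega*\phi_\epsilon$, apply the smooth identity to the pair $(u^\epsilon,\omega^\epsilon)$, and then let $\epsilon\to0$. On the left-hand side $u^\epsilon\otimes u^\epsilon\to u\otimes u$ in $L^1((0,T)\times\mathbb{T}^3)$ (using $u\in L^\infty((0,T);L^2)$, the $L^2$-convergence of mollifications, and dominated convergence in time), so the left-hand side converges after pairing against the smooth tensor $\nabla(\nabla\times\psi)$.

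The main obstacle, and the only genuinely delicate point, is the passage to the limit on the right-hand side, because the product $u\otimes\omega$ of the velocity $u\in H^{1/2+}$ with the negative-regularity vorticity $\omega\in H^{-1/2+}$ is not classically defined and exists only as a Bony paraproduct. Here I would invoke the continuity of the paraproduct operation from $H^{1/2+}(\mathbb{T}^3)\times H^{-1/2+}(\mathbb{T}^3)$ into $B^{-1/2}_{1,2}(\mathbb{T}^3)$ provided by Lemma \ref{paraproductlemma}. For the smooth fields $u^\epsilon,\omega^\epsilon$ the classical product coincides with its Bony decomposition, and since for almost every $t$ one has $u^\epsilon(t)\to u(t)$ and $\omega^\epsilon(t)\to\omega(t)$ strongly in slightly weaker Sobolev norms while the bilinear paraproduct estimates are uniform, one obtains $u^\epsilon(t)\otimes\omega^\epsilon(t)\to u(t)\otimes\omega(t)$ in $B^{-1/2}_{1,2}(\mathbb{T}^3)$ for almost every $t$, with a uniform bound in $t$ and $\epsilon$. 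Pairing against $\nabla\psi$, which is smooth and hence lies in $B^{1/2+\theta}_{\infty,2}$, and using dominated convergence in time, the right-hand side converges to $\int_0^T\langle u\otimes\omega-\omega\otimes u,\nabla\psi\rangle\,dt$ interpreted in the paraproduct sense. Combining the limits of the time term and the nonlinear term yields \eqref{functionalvorticityeq}, so $(u,\omega)$ is a functional vorticity solution.
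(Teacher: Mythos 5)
Your proposal is correct, and its core strategy is the same as the paper's: substitute $\zeta=\nabla\times\psi$ in \eqref{weakformulationeuler}, use smooth vector-calculus identities for mollified fields, and pass to the limit using the paraproduct estimates of Lemma \ref{paraproductlemma} together with strong convergence of mollifications (Lemma \ref{mollificationbesovlemma}). Two points of execution differ from the paper, both in your favor. First, you keep $\psi$ arbitrary throughout, observing that $\nabla\times\psi$ is divergence-free whatever $\psi$ is; the paper instead first derives the identity for divergence-free $\psi$ and then extends to general test functions $\Psi=\psi+\nabla\varphi$ via the Helmholtz--Weyl decomposition, tracking an extra commutator $\int[(u_iu_j)^\epsilon-u_i^\epsilon u_j^\epsilon]\partial_i\partial_j\varphi\,dx\,dt$. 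Your shortcut is legitimate: the smooth identity $\int(u^\epsilon\otimes u^\epsilon):\nabla(\nabla\times\psi)\,dx=\int(u^\epsilon\otimes\omega^\epsilon-\omega^\epsilon\otimes u^\epsilon):\nabla\psi\,dx$ holds for arbitrary smooth $\psi$ (its derivation never uses $\nabla\cdot\psi=0$; equivalently, the gradient part of $\psi$ pairs to zero against the antisymmetric tensor $u\otimes\omega-\omega\otimes u$), so the two-step extension is not needed. Second, you avoid the commutator $(u_iu_j)^\epsilon-u_i^\epsilon u_j^\epsilon$ altogether by replacing $u\otimes u$ with $u^\epsilon\otimes u^\epsilon$ directly, via $L^1$ convergence against the fixed smooth tensor $\nabla(\nabla\times\psi)$, whereas the paper mollifies inside the weak formulation and must estimate that commutator in $W^{1/2,1}$ using \eqref{paradiffineq3}. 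Both routes then rest on the same limit $u^\epsilon\otimes\omega^\epsilon\to u\otimes\omega$ in $B^{-1/2}_{1,\cdot}$, which you justify correctly by bilinearity, the uniform paraproduct bounds, and dominated convergence in time.

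One caveat: your parenthetical claim that the sign in $\int(u\otimes u):\nabla(\nabla\times\psi)\,dx=-\langle u\otimes\omega-\omega\otimes u,\nabla\psi\rangle$ is ``immaterial since the identity is an equation to zero'' is faulty reasoning. An overall sign of the whole equation is immaterial; the sign of the nonlinear term \emph{relative} to the time term (which you computed with a definite sign, $+\int_0^T\langle\omega,\partial_t\psi\rangle\,dt$) is not --- with a genuine minus sign you would have derived the weak form of $\partial_t\omega-\nabla\cdot(u\otimes\omega-\omega\otimes u)=0$ rather than \eqref{functionalvorticityeq}. Under the paper's conventions, namely $(u\otimes\omega)_{ij}=u_i\omega_j$, $T:\nabla\psi=T_{ij}\partial_i\psi_j$ and $[\nabla\cdot T]_j=\partial_iT_{ij}$, the identity in fact comes out with a plus sign:
\begin{equation*}
\int_{\mathbb{T}^3}(u\otimes u):\nabla(\nabla\times\psi)\,dx=-\int_{\mathbb{T}^3}(\omega\times u)\cdot(\nabla\times\psi)\,dx=-\int_{\mathbb{T}^3}\big[\nabla\times(\omega\times u)\big]\cdot\psi\,dx=\int_{\mathbb{T}^3}(u\otimes\omega-\omega\otimes u):\nabla\psi\,dx,
\end{equation*}
using \eqref{vectorcalculusidentity2}, the vanishing of $\nabla(\lvert u\rvert^2)$ against the divergence-free field $\nabla\times\psi$, and \eqref{vectorcalculusidentity}. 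So your argument does produce \eqref{functionalvorticityeq} exactly, but the sign bookkeeping must be carried out rather than dismissed.
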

\begin{proof}
In equation \eqref{weakformulationeuler}, we choose $\zeta = \nabla \times \psi$, where $\psi$ is a divergence-free test function in $\mathcal{D} (\mathbb{T}^3 \times (0,T); \mathbb{R}^3)$. Let $\epsilon_{ijk}$ be the Levi-Civita tensor. We can then write the weak formulation from \eqref{weakformulationeuler} as follows in component notation (recall that we are using the Einstein summation convention)
\begin{equation*}
\int_0^T \int_{\mathbb{T}^3} \bigg[ u_i \partial_t \big( \epsilon_{ijk} \partial_j \psi_k \big) + u_i u_j \partial_i \big( \epsilon_{jkl} \partial_k \psi_l) \bigg] dx dt = 0.
\end{equation*}
By definition of the distributional derivative the left-hand side is equivalent to equation \eqref{functionalvorticityeq}. This can be established rigorously by introducing a mollified version of the left-hand side, which is
\begin{equation} \label{mollifiedformulation}
\int_0^T \int_{\mathbb{T}^3} \bigg[ u_i^\epsilon \partial_t \big( \epsilon_{ijk} \partial_j \psi_k \big) + u_i^\epsilon u_j^\epsilon \partial_i \big( \epsilon_{jkl} \partial_k \psi_l) + \big[ (u_i u_j)^\epsilon - u_i^\epsilon u_j^\epsilon \big] \partial_i \big( \epsilon_{jkl} \partial_k \psi_l) \bigg] dx dt = 0.
\end{equation}
It is clear that the commutator term $\big[ (u_i u_j)^\epsilon - u_i^\epsilon u_j^\epsilon \big]$ vanishes as $\epsilon \rightarrow 0$. By inequality \eqref{paradiffineq3} it follows that $u \otimes u \in L^\infty ((0,T); W^{1/2,1} (\mathbb{T}^3))$, because $u \in L^\infty ((0,T); H^{1/2+} (\mathbb{T}^3))$. Therefore by applying Proposition 17.12 in \cite{leoni}, we find (for $i,j=1,2,3$)
\begin{align*}
\lVert (u_i u_j)^\epsilon - u_i^\epsilon u_j^\epsilon \rVert_{L^\infty (L^1)} &\lesssim \lVert (u_i u_j)^\epsilon - u_i u_j \rVert_{L^\infty (W^{1/2,1})} + \lVert u_i u_j - u_i^\epsilon u_j^\epsilon \rVert_{L^\infty (L^2)} \\
&\lesssim \lVert (u_i u_j)^\epsilon - u_i u_j \rVert_{L^\infty (W^{1/2,1})} + 2 \lVert u \rVert_{L^\infty (H^{1/2+})} \lVert u - u^\epsilon \rVert_{L^\infty (H^{1/2+})} \xrightarrow[]{\epsilon \rightarrow 0} 0.
\end{align*}
Now let $\omega^\epsilon \coloneqq \nabla \times u^\epsilon$, by standard identities from vector calculus it is clear that the weak formulation \eqref{mollifiedformulation} can be written as
\begin{equation*}
\int_0^T \int_{\mathbb{T}^3} \bigg[ \omega^\epsilon \partial_t \psi + u^\epsilon \otimes \omega^\epsilon : \nabla \psi - \omega^\epsilon \otimes u^\epsilon : \nabla \psi + \big[ (u_i u_j)^\epsilon - u_i^\epsilon u_j^\epsilon \big] \partial_i \big( \epsilon_{jkl} \partial_k \psi_l \big) \bigg] dx dt = 0.
\end{equation*}
One then observes that at the expense of an additional commutator term (by using that $\omega^\epsilon$ is divergence-free), in fact the equation holds for all test functions (which are not necessarily divergence-free). By defining a test function $\Psi \coloneqq \psi + \nabla \varphi$ (where $\varphi \in \mathcal{D} (\mathbb{T}^3 \times (0,T))$) we observe that
\begin{align}
&\int_0^T \int_{\mathbb{T}^3} \bigg[ \omega^\epsilon \partial_t \Psi + u^\epsilon \otimes \omega^\epsilon : \nabla \Psi - \omega^\epsilon \otimes u^\epsilon : \nabla \Psi + \big[ (u_i u_j)^\epsilon - u_i^\epsilon u_j^\epsilon \big] \partial_i \big( \epsilon_{jkl} \partial_k \psi_l + \partial_j \varphi \big) \bigg] dx dt \nonumber \\
&= \int_0^T \int_{\mathbb{T}^3} \big[ (u_i u_j)^\epsilon - u_i^\epsilon u_j^\epsilon \big] \partial_i \partial_j \varphi dx dt. \label{mollifiedformulation2}
\end{align}
We recall that due to the Helmholtz-Weyl decomposition (see for example \cite[~Theorem 2.6]{robinsonbook}) the weak formulation therefore holds for all test functions $\Psi \in \mathcal{D} (\mathbb{T}^3 \times (0,T))$ (which are not necessarily divergence-free). The last thing that remains to be shown is that $u^\epsilon \otimes \omega^\epsilon \xrightarrow[]{\epsilon \rightarrow} u \otimes \omega$ in $L^1 ((0,T); B^{-1/2}_{1,1} (\mathbb{T}^3))$. By applying inequality \eqref{paradiffineq1} we find
\begin{align*}
&\lVert u^\epsilon \otimes \omega^\epsilon - u \otimes \omega \rVert_{L^1 (B^{-1/2}_{1,1})} \leq \lVert u^\epsilon \otimes \omega^\epsilon - u^\epsilon \otimes \omega + u^\epsilon \otimes \omega - u \otimes \omega \rVert_{L^1 (B^{-1/2}_{1,1})} \\
&\lesssim \lVert u^\epsilon \rVert_{L^2 (H^{1/2+})} \lVert \omega^\epsilon - \omega \rVert_{L^2 (H^{-1/2+})} + \lVert u^\epsilon - u \rVert_{L^2 (H^{1/2+})} \lVert \omega \rVert_{L^2 (H^{-1/2+})} \xrightarrow[]{\epsilon \rightarrow 0} 0,
\end{align*}
where we have applied Lemma \ref{mollificationbesovlemma} and Proposition 17.12 in \cite{leoni}. Therefore we conclude that equation \eqref{mollifiedformulation2} converges to equation \eqref{functionalvorticityeq} for all test functions in $\mathcal{D} (\mathbb{T}^3 \times (0,T))$.
\end{proof}
Next we show that a functional vorticity solution also solves the Euler equations in the velocity formulation. 
\begin{proposition}
Let $(u, \omega)$ be a functional vorticity solution of the Euler equations, then $u$ is a weak solution of the Euler equations in the standard sense (i.e., $u$ satisfies equation \eqref{weakformulationeuler}).
\end{proposition}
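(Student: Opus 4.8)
The plan is to reverse the computation carried out in Proposition \ref{funcvorticityprop}. Given a functional vorticity solution $(u,\omega)$, I want to recover the standard weak formulation \eqref{weakformulationeuler} for divergence-free test functions $\zeta$. The key observation is that for a divergence-free $\zeta \in \mathcal{D}(\mathbb{T}^3\times(0,T);\mathbb{R}^3)$ on the torus, one can (after subtracting its mean, which is harmless since we may assume $\zeta$ is mean-free in each time slice by the divergence-free constraint, or handle the mean separately) write $\zeta = \nabla \times \psi$ for some divergence-free vector potential $\psi$, namely $\psi = \nabla \times (-\Delta)^{-1}\zeta$. This $\psi$ is again a smooth test function in $\mathcal{D}(\mathbb{T}^3\times(0,T);\mathbb{R}^3)$, so it is an admissible test function in \eqref{functionalvorticityeq}.

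First I would substitute this $\psi$ into the functional vorticity formulation \eqref{functionalvorticityeq} and unwind the definitions using $\omega = \nabla \times u$. The time-derivative term $\int_0^T \langle \omega, \partial_t \psi\rangle\, dt$ should, after integrating the curl by parts onto $u$, reproduce $\int_0^T\int_{\mathbb{T}^3} u \cdot \partial_t(\nabla\times\psi)\,dx\,dt = \int_0^T\int_{\mathbb{T}^3} u\cdot\partial_t\zeta\,dx\,dt$. The transport term $\int_0^T \langle u\otimes\omega - \omega\otimes u, \nabla\psi\rangle\,dt$ should, via the vector-calculus identities used in the forward direction (the same $\epsilon_{ijk}$ manipulations that converted $u_iu_j\,\partial_i(\epsilon_{jkl}\partial_k\psi_l)$ into the $u\otimes\omega - \omega\otimes u$ structure), collapse back to $\int_0^T\int_{\mathbb{T}^3}(u\otimes u):\nabla\zeta\,dx\,dt$. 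To keep every integration by parts rigorous at the low regularity of $\omega$, I would again mollify: set $u^\epsilon = u*\phi_\epsilon$, work with $\omega^\epsilon = \nabla\times u^\epsilon$, perform the smooth manipulations, and then pass to the limit using the convergences $u^\epsilon\otimes\omega^\epsilon \to u\otimes\omega$ in $L^1((0,T);B^{-1/2}_{1,1})$ and the commutator estimate established in the proof of Proposition \ref{funcvorticityprop}, invoking inequalities \eqref{paradiffineq1} and \eqref{paradiffineq3} together with Lemma \ref{paraproductlemma}.

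The main obstacle I anticipate is bookkeeping the pressure and the role of mean-free/divergence-free constraints. The standard weak formulation \eqref{weakformulationeuler} is tested only against divergence-free $\zeta$, precisely so that $\nabla p$ drops out; the functional vorticity formulation has no pressure because it lives at the level of the curl. I must therefore confirm that the potential $\psi = \nabla\times(-\Delta)^{-1}\zeta$ recovers $\zeta$ exactly (i.e. $\nabla\times\psi = \zeta$ holds because $\zeta$ is divergence-free and mean-free, using $\nabla\times\nabla\times = -\Delta$ on divergence-free fields), and that no spurious gradient contribution survives. Handling the constant (zero-frequency) mode of $\zeta$ requires a small separate argument, since $(-\Delta)^{-1}$ is only defined on mean-free fields; but the nonlinear term $(u\otimes u):\nabla\zeta$ and the time-derivative term both annihilate constants in $\zeta$, so the mean of $\zeta$ contributes nothing and can be discarded.

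Because this direction is essentially the forward computation of Proposition \ref{funcvorticityprop} run in reverse, I expect no genuinely new analytic estimate is needed --- the same paraproduct bounds and mollification lemma that justified the passage to the limit there apply verbatim here. The proof should thus reduce to (i) constructing the admissible potential $\psi$ from $\zeta$, (ii) reversing the vector-calculus identities under mollification, and (iii) citing the already-established convergences to pass $\epsilon \to 0$.
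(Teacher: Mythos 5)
Your proposal is correct and is essentially the paper's own argument run in reverse: the paper tests \eqref{functionalvorticityeq} with an arbitrary potential $\psi$, converts the nonlinearity through the mollification-justified identities $\nabla\times(\omega\times u)=\nabla\cdot(u\otimes\omega-\omega\otimes u)$ and $\omega\times u=\nabla\cdot(u\otimes u)-\tfrac12\nabla(\lvert u\rvert^2)$ (the gradient part vanishing against the divergence-free $\zeta=\nabla\times\psi$), and then invokes the Biot--Savart representation to conclude that every divergence-free $\zeta$ arises as such a curl --- exactly the correspondence $\psi=\nabla\times(-\Delta)^{-1}\zeta$ that you set up, with the same paraproduct estimates and mollification lemma justifying the limit $\epsilon\to 0$. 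One small caution: your claim that the time-derivative term annihilates spatially constant $\zeta$ holds only because the paper's conventions make $u$ mean-free (via $u=\nabla\times(-\Delta)^{-1}\omega$), since otherwise $\int_0^T\int_{\mathbb{T}^3}u\cdot\partial_t\zeta\,dx\,dt$ need not vanish for $\zeta$ depending only on $t$ --- a zero-mode subtlety the paper's own proof also passes over silently.
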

\begin{proof}
We start by rewriting equation \eqref{functionalvorticityeq} as follows (because $\nabla \times u = \omega$ as distributions)
\begin{align} \label{curlformvorticityeq}
&\int_0^T \langle \nabla \times u, \partial_t \psi \rangle dx dt - \int_0^T \big\langle \omega \times u , \nabla \times \psi \big\rangle dt = 0,
\end{align}
where we have used the following identity
\begin{equation} \label{vectorcalculusidentity}
\nabla \times ( \omega \times u ) = \nabla \cdot (u \otimes \omega - \omega \otimes u).
\end{equation}
Notice that this identity holds in the space $L^\infty ((0,T); B^{-3/2}_{1,1} (\mathbb{T}^3))$. One can prove it as follows: Let $u^\epsilon$ be the mollification of $u$, and define $\omega^\epsilon \coloneqq \nabla \times u^\epsilon$. By standard identities from vector calculus, it follows that equation \eqref{vectorcalculusidentity} holds for $u^\epsilon$ and $\omega^\epsilon$. One can deduce that by using Lemma \ref{paraproductlemma} (similarly to the proof of Proposition \ref{funcvorticityprop})
\begin{align*}
&\lVert \nabla \times ( \omega \times u ) - \nabla \times ( \omega^\epsilon \times u^\epsilon ) \rVert_{L^1 (B^{-3/2}_{1,1})} \lesssim \lVert \omega \times u - \omega^\epsilon \times u^\epsilon \rVert_{L^1 (B^{-1/2}_{1,1})} \\
&\lesssim \lVert \omega \rVert_{L^2 (H^{-1/2+})} \lVert u - u^\epsilon \rVert_{L^2 (H^{1/2+})} + \lVert \omega^\epsilon - \omega \rVert_{L^2 (H^{-1/2+})} \lVert u^\epsilon \rVert_{L^2 (H^{1/2+})} \xrightarrow[]{\epsilon \rightarrow 0} 0,
\end{align*}
where the convergence to zero follows due to Lemma \ref{mollificationbesovlemma} and Proposition 17.12 in \cite{leoni}. Therefore identity \eqref{vectorcalculusidentity} holds. Subsequently, we use the related identity
\begin{equation} \label{vectorcalculusidentity2}
\omega \times u = \nabla \cdot (u \otimes u) - \frac{1}{2} \nabla (\lvert u \rvert^2),
\end{equation}
which holds in $L^\infty ((0,T); B^{-1/2}_{1,1} (\mathbb{T}^3))$. Again this can be proved by observing that this identity holds for $u^\epsilon$ and $\omega^\epsilon = \nabla \times u^\epsilon$ and using that $\omega^\epsilon \times u^\epsilon \xrightarrow[]{\epsilon \rightarrow 0} \omega \times u$ in $L^1 ((0,T); B^{-1/2}_{1,1} (\mathbb{T}^3))$ and $u^\epsilon \otimes u^\epsilon \xrightarrow[]{\epsilon \rightarrow 0} u \otimes u$ in $L^\infty ((0,T); L^1 (\mathbb{T}^3))$ (as was shown before). By identity \eqref{vectorcalculusidentity2} the second integral in equation \eqref{curlformvorticityeq} can be written as follows
\begin{align*}
- \int_0^T \big\langle \omega \times u , \nabla \times \psi \big\rangle dt &= \int_0^T \big\langle u \otimes u , \nabla ( \nabla \times \psi) \big\rangle dt.
\end{align*}
We observe that the term $\frac{1}{2} \nabla (\lvert u \rvert^2)$ does not appear, as it vanishes due to $\nabla \times \psi$ being divergence-free. If we set $\zeta = \nabla \times \psi$, we can rewrite equation \eqref{curlformvorticityeq} as
\begin{equation*}
\int_0^T \int_{\mathbb{T}^3} u \partial_t \zeta dx dt + \int_0^T \big[ (u \otimes u) : \nabla \zeta \big] dt = 0.
\end{equation*}
We observe that this equation holds for all divergence-free test functions $\zeta \in \mathcal{D} (\mathbb{T}^3 \times (0,T);\mathbb{R}^3)$ because divergence-free vector fields can be written as the curl of another vector field by means of the Biot-Savart law (see for example \cite[~Theorem 12.2]{robinsonbook}). Hence we conclude that equation \eqref{weakformulationeuler} holds.
\end{proof}
We also remark that for functional vorticity solutions the pressure can be recovered in the standard fashion, as we have just proven such solutions to also be weak solutions of the Euler equations for the space of divergence-free test functions. Then by applying the De Rham theorem (see \cite[~Chapter 1, Proposition 1.1]{temambook}) one finds that there exist a pressure $p \in \mathcal{D}' (\mathbb{T}^3 \times (0,T))$ such that
\begin{equation*}
\nabla p = - \partial_t u - \nabla \cdot (u \otimes u) \quad \text{in } \mathcal{D}' (\mathbb{T}^3 \times (0,T)).
\end{equation*}
Then one can check that the pressure $p$ satisfies the following usual elliptic problem 
\begin{equation} \label{ellipticproblem}
-\Delta p = (\nabla \otimes \nabla) : (u \otimes u), \quad \int_{\mathbb{T}^3} p dx = 0.
\end{equation}
By applying standard results from elliptic regularity theory, for functional vorticity solutions we have at least $p \in L^\infty ((0,T); W^{1/2,1} (\mathbb{T}^3))$, as $u \otimes u \in L^1 ((0,T); W^{1/2,1+} (\mathbb{T}^3)$ by Lemma \ref{paraproductlemma}.

From now on, we will often refer to functional vorticity solutions without mentioning Definition \ref{functionalvorticity}, for the sake of brevity. Now we derive the equation of local helicity balance. In order to do so, we will first need a proposition.
\begin{proposition} \label{gentestfunctions}
The weak formulation for functional vorticity solutions to the Euler equations, given in equation \eqref{functionalvorticityeq}, still holds for test functions $\psi \in W^{1,1}_0 ((0,T); H^{1/2+} (\mathbb{T}^3)) \cap L^1 ((0,T); H^4 (\mathbb{T}^3))$.
\end{proposition}
\begin{proof}
For any $\psi \in W^{1,1}_0 ((0,T); H^{1/2+} (\mathbb{T}^3)) \cap L^1 ((0,T); H^4 (\mathbb{T}^3))$ we take an approximating sequence $\{ \psi_m \} \subset \mathcal{D} (\mathbb{T}^3 \times (0,T);\mathbb{R}^3)$ such that $\psi_m \rightarrow \psi$ in $W^{1,1}_0 ((0,T); H^{1/2+} (\mathbb{T}^3)) \cap L^1 ((0,T); H^4 (\mathbb{T}^3))$. Then it is easy to check that 
\begin{align*}
&\int_0^T \langle \omega, \partial_t \psi_m \rangle dx dt + \int_0^T \big\langle u \otimes \omega - \omega \otimes u , \nabla \psi_m \big\rangle dt \xrightarrow[]{m \rightarrow \infty} \int_0^T \langle \omega, \partial_t \psi \rangle dt \\
&+ \int_0^T \big\langle u \otimes \omega - \omega \otimes u , \nabla \psi \big\rangle dt,
\end{align*} 
as $u \otimes \omega \in L^\infty ((0,T);B^{-1/2}_{1,1} (\mathbb{T}^3))$.
\end{proof}
We now state the same proposition for the Euler equations in velocity form.
\begin{proposition} \label{gentestfunctions2}
The weak formulation of the Euler equations in velocity form given in equation \eqref{weakformulationeuler} still holds for test functions in $W^{1,1}_0 ((0,T); L^2 (\mathbb{T}^3)) \cap L^1 ((0,T); H^3 (\mathbb{T}^3))$.
\end{proposition}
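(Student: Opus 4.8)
The plan is to reproduce, for the velocity formulation, the density argument used in the proof of Proposition \ref{gentestfunctions}. Given a divergence-free test function $\zeta \in W^{1,1}_0 ((0,T); L^2 (\mathbb{T}^3)) \cap L^1 ((0,T); H^3 (\mathbb{T}^3))$, I would first construct a sequence $\{ \zeta_m \} \subset \mathcal{D} (\mathbb{T}^3 \times (0,T);\mathbb{R}^3)$ of smooth, divergence-free, compactly-supported-in-time test functions with $\zeta_m \to \zeta$ in this intersection norm, and then pass to the limit in \eqref{weakformulationeuler}, which is already known to hold for each $\zeta_m$.

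The passage to the limit reduces to checking that both pairings in \eqref{weakformulationeuler} are continuous with respect to this convergence. For the first integral, since $u \in L^\infty ((0,T); L^2 (\mathbb{T}^3))$ and $\partial_t \zeta \in L^1 ((0,T); L^2 (\mathbb{T}^3))$, I would estimate
\[
\left| \int_0^T \int_{\mathbb{T}^3} u \cdot \partial_t (\zeta_m - \zeta) \, dx \, dt \right| \le \| u \|_{L^\infty(L^2)} \, \| \partial_t (\zeta_m - \zeta) \|_{L^1(L^2)} \xrightarrow[]{m \to \infty} 0.
\]
For the second integral I would use that $u \otimes u \in L^\infty((0,T); L^1(\mathbb{T}^3))$, since $\| u_i u_j \|_{L^1} \le \| u \|_{L^2}^2$, together with the Sobolev embedding $H^3 (\mathbb{T}^3) \hookrightarrow W^{1,\infty}(\mathbb{T}^3)$ in three dimensions, which gives $\nabla \zeta_m \to \nabla \zeta$ in $L^1 ((0,T); L^\infty(\mathbb{T}^3))$; hence
\[
\left| \int_0^T \int_{\mathbb{T}^3} (u \otimes u) : \nabla (\zeta_m - \zeta) \, dx \, dt \right| \le \| u \otimes u \|_{L^\infty(L^1)} \, \| \nabla (\zeta_m - \zeta) \|_{L^1(L^\infty)} \xrightarrow[]{m \to \infty} 0.
\]

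The step I expect to require the most care is the construction of $\{ \zeta_m \}$, specifically producing approximants that are simultaneously smooth, compactly supported in time, \emph{and} divergence-free. I would proceed in two stages. First I would cut off in time using a smooth cutoff $\chi_\delta$ that equals one on $[\delta, T-\delta]$ and vanishes near $t = 0$ and $t = T$; the fields $\chi_\delta \zeta$ converge to $\zeta$ in $L^1 ((0,T); H^3)$ by dominated convergence, and in $W^{1,1}_0 ((0,T); L^2)$ provided one controls the term $\chi_\delta' \zeta$ appearing in $\partial_t (\chi_\delta \zeta)$. Here the $W^{1,1}_0$ structure is essential: because $\zeta$ has vanishing trace at the endpoints one has $\| \zeta(t) \|_{L^2} \le \int_0^t \| \partial_t \zeta(s) \|_{L^2} \, ds$ near $t = 0$ (and symmetrically near $t = T$), so that $\int_0^T |\chi_\delta'(t)| \, \| \zeta(t) \|_{L^2} \, dt \lesssim \int_0^\delta \| \partial_t \zeta \|_{L^2} \, ds + \int_{T-\delta}^T \| \partial_t \zeta \|_{L^2} \, ds \to 0$ as $\delta \to 0$. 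Second, I would mollify $\chi_\delta \zeta$ in space and time; since mollification commutes with the spatial divergence on $\mathbb{T}^3$, the divergence-free constraint is preserved, and for a small enough mollification parameter the temporal support remains compact, yielding the required sequence. Since \eqref{weakformulationeuler} holds along $\{ \zeta_m \}$, the two limits above establish the identity for $\zeta$.
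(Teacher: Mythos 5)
Your proposal is correct and follows essentially the same route as the paper: the paper's proof of this proposition simply invokes the density argument of Proposition \ref{gentestfunctions} (approximate the test function by smooth, compactly-supported-in-time test functions and pass to the limit in both pairings of \eqref{weakformulationeuler}). Your write-up additionally fills in details the paper leaves implicit — the explicit continuity estimates using $u\otimes u \in L^\infty((0,T);L^1(\mathbb{T}^3))$ with $H^3(\mathbb{T}^3)\hookrightarrow W^{1,\infty}(\mathbb{T}^3)$, and the construction of divergence-free approximants via a temporal cutoff (exploiting the vanishing trace from $W^{1,1}_0$) followed by mollification — all of which are sound.
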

\begin{proof}
The proof proceeds in the same way as the proof of Proposition \ref{gentestfunctions}.
\end{proof}
We next introduce the following notation (where we recall that $u \in L^\infty ((0,T); H^{1/2+} (\mathbb{T}^3))$)
\begin{equation}
\delta u (\xi;x,t) \coloneqq u ( x + \xi, t) - u (x,t), \quad \text{for } x, \xi \in \mathbb{T}^3. 
\end{equation}
As $\omega \in L^\infty ((0,T); H^{-1/2+} (\mathbb{T}^3))$ and hence is only a functional, we define $\delta \omega$ by duality. This means that for $\xi \in \mathbb{T}^3$, $t \in (0,T)$ and $\psi \in L^\infty ((0,T); H^{1/2} (\mathbb{T}^3))$
\begin{equation} \label{dualityincrement}
\langle \delta \omega (\xi; \cdot, t) , \psi (\cdot, t) \rangle_{H^{-1/2} (\mathbb{T}^3) \times H^{1/2} (\mathbb{T}^3)} = \langle \omega (\cdot, t) , \delta \psi (-\xi; \cdot, t) \rangle_{H^{-1/2} (\mathbb{T}^3) \times H^{1/2} (\mathbb{T}^3)}.
\end{equation}
Now we turn to establishing the equation of local helicity balance. 
\begin{theorem} \label{helicitybalance}
Let $(u,\omega)$ be a functional vorticity solution of the Euler equations such that $u \in L^3 ((0,T); W^{1/2+,3} (\mathbb{T}^3))$. Then the following equation of local helicity balance holds for all test functions $\varphi \in \mathcal{D} (\mathbb{T}^3 \times (0,T); \mathbb{R})$
\begin{align}
&\int_0^T \bigg[ - \bigg\langle 2 \omega u, \partial_t \varphi \bigg\rangle + \bigg\langle  2 \nabla \cdot (p \omega) + \nabla \cdot \big[ 2 (u \cdot \omega) u - \lvert u \rvert^2 \omega \big], \varphi \bigg\rangle \bigg] dt \nonumber \\
&+ \bigg\langle D_{1} (u, \omega) - \frac{1}{2} D_{2} (u, \omega) ,  \varphi \bigg\rangle_{x,t} = 0. \label{localhelicitybalance}
\end{align}
where the defect terms are given by
\begin{align}
D_1 (u,\omega) - \frac{1}{2} D_2 (u, \omega) &\coloneqq \lim_{\epsilon \rightarrow 0} \bigg[\int_{\mathbb{R}^3} \nabla \phi_\epsilon (\xi) \cdot \delta u (\xi ; x,t) (\delta \omega (\xi ;x,t) \cdot \delta u (\xi ; x, t) ) d \xi \nonumber \\
&- \frac{1}{2} \int_{\mathbb{R}^3} \nabla \phi_\epsilon (\xi) \cdot \delta \omega (\xi ; x,t) \lvert \delta u (\xi ;x,t) \rvert^2 d \xi \bigg],
\end{align}
where the convergence is in the sense of distributions and independent of the choice of the mollifier. Specifically, we have the following evolution equation for the helicity density
\begin{equation}
2 \partial_t (u \omega) + 2 \nabla \cdot (p \omega) + \nabla \cdot \big[2 (u \cdot \omega) u - \lvert u \rvert^2 \omega \big] + \bigg[ D_1 (u,\omega) - \frac{1}{2} D_2 (u, \omega) \bigg] = 0, \quad \text{in } \mathcal{D}' (\mathbb{T}^3 \times (0,T)).
\end{equation}
\end{theorem}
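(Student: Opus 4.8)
The plan is to adapt the Duchon--Robert local energy balance to the helicity density $u\cdot\omega$, reinterpreting the mollification errors as Duchon--Robert-type defects. First I would pass to mollified equations. Since $(u,\omega)$ is a functional vorticity solution it is also a weak solution in velocity form, and the pressure satisfies \eqref{ellipticproblem}; mollifying the velocity formulation \eqref{weakformulationeuler} and the vorticity formulation \eqref{functionalvorticityeq} (using that $u\otimes u\in L^\infty((0,T);W^{1/2,1}(\mathbb{T}^3))$ and $u\otimes\omega-\omega\otimes u\in L^\infty((0,T);B^{-1/2}_{1,1}(\mathbb{T}^3))$, and that the extended test-function classes of Propositions \ref{gentestfunctions} and \ref{gentestfunctions2} are used to legitimise testing against mollified functions that carry only the solution's regularity) yields the pointwise-in-$x$ identities
\begin{align*}
\partial_t u^\epsilon + \nabla\cdot(u\otimes u)^\epsilon + \nabla p^\epsilon &= 0, \\
\partial_t\omega^\epsilon + \nabla\cdot(u\otimes\omega-\omega\otimes u)^\epsilon &= 0,
\end{align*}
valid in $\mathcal{D}'((0,T))$ for each $x$, with $u^\epsilon,\omega^\epsilon$ smooth in $x$ and absolutely continuous in $t$. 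I may then legitimately differentiate the smooth product and write $2\partial_t(u^\epsilon\cdot\omega^\epsilon)=2\omega^\epsilon\cdot\partial_t u^\epsilon+2u^\epsilon\cdot\partial_t\omega^\epsilon$.

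Substituting the two equations produces the pressure term $-2\omega^\epsilon\cdot\nabla p^\epsilon=-2\nabla\cdot(p^\epsilon\omega^\epsilon)$ (using $\nabla\cdot\omega^\epsilon=0$) together with three nonlinear contributions, stemming from $\omega^\epsilon\cdot\nabla\cdot(u\otimes u)^\epsilon$, from $u^\epsilon\cdot\nabla\cdot(u\otimes\omega)^\epsilon$ and from $u^\epsilon\cdot\nabla\cdot(\omega\otimes u)^\epsilon$. For smooth fields the first two combine, via $\nabla\cdot u=0$, into the advective flux $-2\nabla\cdot[(u\cdot\omega)u]$ and the third into the vortex-stretching flux $\nabla\cdot(|u|^2\omega)$, which reproduces the classical helicity balance with no defect. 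The heart of the argument is to carry out these combinations at the mollified level without discarding the error. For this I would represent each product as $(fg)^\epsilon(x)=\int_{\mathbb{R}^3}\phi_\epsilon(\xi)f(x+\xi)g(x+\xi)\,d\xi$, decompose the fields into their base value plus increments $\delta u(\xi;x)$, $\delta\omega(\xi;x)$, and integrate by parts in $\xi$ so that the spatial derivative falls on the mollifier and $\nabla\phi_\epsilon$ appears; the pure base-value contributions cancel because $\int_{\mathbb{R}^3}\nabla\phi_\epsilon\,d\xi=0$, while the mixed contributions reassemble into perfect $x$-divergences (the mollified fluxes). What is left is a cubic Duchon--Robert residual: the two advective terms yield $D_1(u,\omega)=\int_{\mathbb{R}^3}\nabla\phi_\epsilon(\xi)\cdot\delta u\,(\delta\omega\cdot\delta u)\,d\xi$, while the stretching term yields $-\tfrac12 D_2(u,\omega)=-\tfrac12\int_{\mathbb{R}^3}\nabla\phi_\epsilon(\xi)\cdot\delta\omega\,|\delta u|^2\,d\xi$, the factor $\tfrac12$ arising from $|u|^2=u\cdot u$ and the increment identity $\delta(|u|^2)=2u\cdot\delta u+|\delta u|^2$.

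This produces, for each $\epsilon>0$, an exact local balance consisting of mollified flux terms plus the residual $(D_1-\tfrac12 D_2)_\epsilon$. Pairing against a fixed $\varphi\in\mathcal{D}(\mathbb{T}^3\times(0,T))$, integrating the time derivative by parts, and letting $\epsilon\to0$, I would show that the flux terms converge to their un-mollified counterparts: by Lemma \ref{mollificationbesovlemma}, Lemma \ref{paraproductlemma} and the regularity $u\in L^\infty((0,T);H^{1/2+})$, $\omega\in L^\infty((0,T);H^{-1/2+})$, one obtains $2\omega^\epsilon u^\epsilon\to 2\omega u$, $p^\epsilon\omega^\epsilon\to p\omega$, and convergence of $(u\cdot\omega)u$ and $|u|^2\omega$ in the relevant $L^1$-in-time negative-Besov norms, exactly as in the proof of Proposition \ref{funcvorticityprop}. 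Since every term in the $\epsilon$-balance other than the residual converges to an intrinsic limit, the distributional limit of $(D_1-\tfrac12 D_2)_\epsilon$ is forced to exist, being determined by the limits of all the other terms; and because each of those limits is manifestly independent of the mollifier $\phi$, so is the limit of the residual. This establishes at once the distributional convergence of the defect and its independence of the mollifier, and delivers both \eqref{localhelicitybalance} and the equivalent pointwise-distributional evolution equation.

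The main obstacle is making sense of, and passing to the limit in, the cubic defect integrands, which contain the genuinely distributional factor $\delta\omega$ (recall $\omega\in H^{-1/2+}$ only). These products have to be interpreted through the duality definition \eqref{dualityincrement}, pairing $\delta\omega$ against the higher-regularity object built from $\delta u$; it is precisely here that the extra hypothesis $u\in L^3((0,T);W^{1/2+,3}(\mathbb{T}^3))$ enters, being the helicity analogue of the Onsager space $L^3((0,T);B^{1/3}_{3,\infty}(\mathbb{T}^3))$. Two increments of $u$ in $W^{1/2+,3}$ supply the spatial regularity needed to pair with $\delta\omega$, while the $L^3$ integrability in time closes the Hölder estimate for the triple product. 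Controlling $(D_1-\tfrac12 D_2)_\epsilon$ as a well-defined distribution, rather than merely establishing convergence of the flux terms, is the delicate step, and it is where the paraproduct inequalities of Lemma \ref{paraproductlemma} together with the increment bounds do the real work; this is also the representation of the defect that will be matched to a third-order structure function in Section \ref{triplecorrsection}.
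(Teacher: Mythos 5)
Your overall strategy (mollify, derive an exact $\epsilon$-balance, and let the convergence of the flux terms force the existence and mollifier-independence of the defect limit) is indeed the paper's strategy, but your implementation breaks at its central algebraic step. By evolving $u^\epsilon\cdot\omega^\epsilon$, every cubic term in your balance, e.g. $\omega^\epsilon\cdot\nabla\cdot(u\otimes u)^\epsilon$ or $u^\epsilon\cdot\nabla\cdot(u\otimes\omega)^\epsilon$, carries \emph{two} independent mollification integrals: expanding in increments produces doubly smeared expressions of the form
$\int\!\!\int \phi_\epsilon(\eta)\,\nabla\phi_\epsilon(\xi)\,\delta\omega(\eta;x)\cdot\big[\cdots\delta u(\xi;x)\cdots\big]\,d\eta\,d\xi$,
in which the $\omega$-increment sits at a displacement $\eta$ unrelated to the displacement $\xi$ of the $u$-increments. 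Such residuals are \emph{not} the quantities $D_{1,\epsilon}-\tfrac12 D_{2,\epsilon}=\int\nabla\phi_\epsilon(\xi)\cdot\big[\delta u\,(\delta\omega\cdot\delta u)-\tfrac12\,\delta\omega\,\lvert\delta u\rvert^2\big]\,d\xi$ appearing in the theorem, where all three increments share the same $\xi$; so the "reassembly" step, as you state it, does not produce the defect the theorem asserts. Nor can this be repaired cheaply by swapping the outer mollified factors for un-mollified ones: the error terms, e.g. $(u^\epsilon-u)\cdot\nabla\cdot(u\otimes\omega)^\epsilon$, pair something small in $W^{1/2+,3}$ against something only bounded in $B^{-3/2}_{1,q}$, and since $\tfrac12-\tfrac32<0$ (and, after integrating by parts, $-\tfrac12-\tfrac12<0$) this lies outside the reach of the paraproduct estimates of Lemma \ref{paraproductlemma}. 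Individual nonlinear terms genuinely fail to converge at this regularity; only the correctly assembled combination does.

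What the paper does instead, and what your argument is missing, is a point-splitting (doubling) that guarantees each nonlinear term carries exactly \emph{one} mollification: test the weak vorticity formulation \eqref{functionalvorticityeq} with $\varphi u^\epsilon$ and the weak velocity formulation \eqref{weakformulationeuler} with $\varphi\omega^\epsilon$ (legitimised by Propositions \ref{gentestfunctions} and \ref{gentestfunctions2}), then subtract the mollified equations \eqref{mollifiedeuler1}--\eqref{mollifiedeuler2} paired with $\omega\varphi$ and $u\varphi$ respectively. The evolved density is then the symmetric point-split quantity $\omega^\epsilon u+\omega u^\epsilon$ rather than $u^\epsilon\cdot\omega^\epsilon$, all cubic terms are mixed products of un-mollified fields with singly mollified ones, and for exactly these the identities \eqref{D1epsilon}--\eqref{D2epsilon} hold; this is the link between the balance residual and the single-$\xi$ increment integrals $D_{1,\epsilon}$, $D_{2,\epsilon}$ that the statement requires (and that Section \ref{triplecorrsection} later exploits). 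From that point on, your limiting argument --- convergence of the flux terms via Lemmas \ref{paraproductlemma} and \ref{mollificationbesovlemma}, hence existence and mollifier-independence of the distributional limit of the residual --- goes through as you describe.
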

\begin{proof}
We first mollify the velocity and vorticity formulations of the Euler equations, i.e. equations \eqref{eulerequation} and \eqref{vorticityequation} respectively, to give that
\begin{align}
&\partial_t u^\epsilon + \nabla \cdot ( u \otimes u)^\epsilon + \nabla p^\epsilon = 0, \label{mollifiedeuler1} \\
&\partial_t \omega^\epsilon + \nabla \cdot (u \otimes \omega)^\epsilon - \nabla \cdot (\omega \otimes u)^\epsilon = 0. \label{mollifiedeuler2}
\end{align}
We know that $u^\epsilon, \omega^\epsilon \in L^\infty ((0,T); C^\infty (\mathbb{T}^3))$ and from the mollified equations one can see that $\partial_t u^\epsilon, \partial_t \omega^\epsilon \in L^\infty ((0,T); C^\infty (\mathbb{T}^3))$. Hence we are able to conclude that $u^\epsilon, \omega^\epsilon \in W^{1,\infty} ((0,T); \linebreak C^\infty (\mathbb{T}^3))$. By Propositions \ref{gentestfunctions} and \ref{gentestfunctions2} it follows that we are allowed to use $u^\epsilon$ and $\omega^\epsilon$ in the weak formulations given in equations \eqref{functionalvorticityeq} and \eqref{weakformulationeuler}.

Due to Proposition \ref{funcvorticityprop} functional vorticity solutions satisfy the weak formulation \eqref{weakformulationeuler} (which is for divergence-free test functions). As has been explained before, the pressure can then be recovered by means of the De Rham theorem (see \cite[~Chapter 1, Proposition 1.1]{temambook}) and one finds it satisfies the elliptic problem \eqref{ellipticproblem}. As $u \otimes u \in L^{3/2} ((0,T); W^{1/2+,3/2} (\mathbb{T}^3))$ by Lemma \ref{paraproductlemma}, we have that $p \in L^{3/2} ((0,T); W^{1/2+,3/2} (\mathbb{T}^3))$ by results from elliptic regularity.

Now let $\varphi \in \mathcal{D} (\mathbb{T}^3 \times (0,T); \mathbb{R})$. We take $\varphi u^\epsilon$ as a test function in equation \eqref{functionalvorticityeq} and $\varphi \omega^\epsilon$ as a test function in equation \eqref{weakformulationeuler}. Adding them together gives
\begin{align}
&\int_0^T \int_{\mathbb{T}^3} \bigg[ u \partial_t (\omega^\epsilon \varphi) + (u \otimes u) : \nabla (\omega^\epsilon \varphi) + p \nabla \cdot (\omega^\epsilon \varphi) \bigg] dx dt \nonumber \\
&+ \int_0^T \langle \omega, \partial_t (u^\epsilon \varphi) \rangle + \langle u \otimes \omega - \omega \otimes u, \nabla (u^\epsilon \varphi ) \rangle dt = 0. \label{mollifiedtestfunction}
\end{align}
Now by taking the $H^{-1/2} (\mathbb{T}^3) \times H^{1/2} (\mathbb{T}^3)$ duality bracket respectively the $L^2 (\mathbb{T}^3)$ inner product of equations \eqref{mollifiedeuler1} and \eqref{mollifiedeuler2} with $\omega \varphi$ (which lies in $L^\infty ((0,T); H^{-1/2+} (\mathbb{T}^3))$) and respectively $u \varphi$ then subtracting it from equation \eqref{mollifiedtestfunction} we obtain
\begin{align*}
&\int_0^T \int_{\mathbb{T}^3} \bigg[ u \partial_t (\omega^\epsilon \varphi) + (u \otimes u) : \nabla (\omega^\epsilon \varphi) + p \nabla \cdot (\omega^\epsilon \varphi) - u \varphi \partial_t \omega^\epsilon -  u \varphi \cdot \nabla \cdot (u \otimes \omega)^\epsilon \\
&+  u \varphi \cdot \nabla \cdot (\omega \otimes u)^\epsilon \bigg] dx dt + \int_0^T \bigg[ \langle \omega, \partial_t (u^\epsilon \varphi) \rangle + \langle u \otimes \omega - \omega \otimes u, \nabla (u^\epsilon \varphi ) \rangle  \\
&- \langle \omega \varphi, \partial_t u^\epsilon + \nabla \cdot ( u \otimes u)^\epsilon + \nabla p^\epsilon \rangle \bigg] dt= 0.
\end{align*}
We first observe that the time derivative terms can be written as follows
\begin{equation*}
\int_0^T \bigg[ \int_{\mathbb{T}^3} \big(u \partial_t (\omega^\epsilon \varphi) - u \varphi \partial_t \omega^\epsilon \big) dx + \langle \omega, \partial_t (u^\epsilon \varphi) \rangle - \langle \omega \varphi, \partial_t u^\epsilon \rangle \bigg] dt = \int_0^T \langle \omega^\epsilon u + \omega u^\epsilon, \partial_t \varphi \rangle dt.
\end{equation*}
Now we write the pressure terms as follows
\begin{align*}
\int_0^T \bigg[ \int_{\mathbb{T}^3} p \nabla \cdot (\omega^\epsilon \varphi) dx - \langle \omega \varphi, \nabla p^\epsilon \rangle \bigg] dt = \int_0^T \langle p \omega^\epsilon + \omega p^\epsilon, \nabla \varphi \rangle dt.
\end{align*}
To deal with the advective terms, we introduce two defect terms given by
\begin{align}
&D_{1,\epsilon} (u, \omega) (x,t) = \int_{\mathbb{R}^3} \nabla \phi_\epsilon (\xi) \cdot \delta u (\xi ; x,t) (\delta \omega (\xi ;x,t) \cdot \delta u (\xi ; x, t) ) d \xi = - \nabla \cdot ( (\omega \cdot u) u)^\epsilon \nonumber \\
&+ u \cdot \nabla (\omega \cdot u)^\epsilon + u \cdot \big( \nabla \cdot (u \otimes \omega)^\epsilon \big) + \omega \cdot \big( \nabla \cdot (u \otimes u)^\epsilon \big) - u \otimes \omega : \nabla u^\epsilon - u \otimes u : \nabla \omega^\epsilon, \label{D1epsilon} \\
&D_{2,\epsilon} (u, \omega) (x,t) = \int_{\mathbb{R}^3} \nabla \phi_\epsilon (\xi) \cdot \delta \omega (\xi ; x,t) \lvert \delta u (\xi ;x,t) \rvert^2 d \xi = - \nabla \cdot ( \lvert u \rvert^2 \omega)^\epsilon + \omega \cdot \nabla (\lvert u \rvert^2 )^\epsilon \nonumber \\
&+ 2 u \cdot \big( \nabla \cdot (\omega \otimes u)^\epsilon \big) - 2 \omega \otimes u : \nabla u^\epsilon. \label{D2epsilon}
\end{align}
Observe that equations \eqref{D1epsilon} and \eqref{D2epsilon} hold in the space $L^1 ((0,T);B^{-1/2}_{1,3}(\mathbb{T}^3))$, which can be seen by applying inequalities \eqref{paradiffineq1} and \eqref{paradiffineq2} in the Appendix. Now we are able to rewrite the advective terms as follows
\begingroup
\allowdisplaybreaks
\begin{align*}
&\int_0^T \int_{\mathbb{T}^3} \bigg[ (u \otimes u) : \nabla (\omega^\epsilon \varphi)  -  u \varphi \cdot \big(\nabla \cdot (u \otimes \omega)^\epsilon \big) +  u \varphi \cdot \big(\nabla \cdot (\omega \otimes u)^\epsilon \big) \bigg] dx dt \\
&+ \int_0^T \bigg[ \langle u \otimes \omega - \omega \otimes u, \nabla (u^\epsilon \varphi ) \rangle - \langle \omega \varphi, \nabla \cdot ( u \otimes u)^\epsilon \rangle \bigg] dt \\
&= \int_0^T \bigg[ \bigg\langle (u \cdot \omega^\epsilon) u + (\omega \cdot u^\epsilon) u - (u \cdot u^\epsilon) \omega , \nabla \varphi \bigg\rangle + \frac{1}{2} \bigg\langle 2((\omega \cdot u) u)^\epsilon - 2(\omega \cdot u)^\epsilon u \\
&- ( \lvert u \rvert^2 \omega)^\epsilon + \omega (\lvert u \rvert^2)^\epsilon, \nabla \varphi \bigg\rangle + \bigg\langle - D_{1,\epsilon} (u,\omega) + \frac{1}{2} D_{2,\epsilon} (u,\omega) ,  \varphi \bigg\rangle \bigg] dt.
\end{align*}
\endgroup
Now we can collect the various terms to arrive at the following equation of local helicity balance (for fixed $\epsilon > 0$)
\begin{align}
&\int_0^T \bigg\langle \partial_t \big( \omega^\epsilon u + \omega u^\epsilon \big) + \nabla \cdot (p \omega^\epsilon + p^\epsilon \omega) + \nabla \cdot \big[ (u \cdot \omega^\epsilon) u + (\omega \cdot u^\epsilon) u - (u \cdot u^\epsilon) \omega \big], \varphi \bigg\rangle dt \nonumber \\
&+ \frac{1}{2} \int_0^T \bigg\langle \nabla \cdot \big[ 2((\omega \cdot u) u)^\epsilon - 2(\omega \cdot u)^\epsilon u - ( \lvert u \rvert^2 \omega)^\epsilon + \omega (\lvert u \rvert^2)^\epsilon \big], \varphi \bigg\rangle dt \nonumber \\
&= \int_0^T \bigg\langle - D_{1,\epsilon} (u, \omega) + \frac{1}{2} D_{2,\epsilon} (u, \omega) ,  \varphi \bigg\rangle dt. \label{mollifiedbalance}
\end{align}
We observe that the left-hand side converges in $W^{-1,1} ((0,T); W^{-2,1} (\mathbb{T}^3))$ as $\epsilon \rightarrow 0$, as $\lvert u \rvert^2 \omega \in L^1 ((0,T); B^{-1/2}_{1,1} (\mathbb{T}^3))$. This means that the limit $\lim_{\epsilon \rightarrow 0} \big[ - D_{1,\epsilon} (u, \omega) + \frac{1}{2} D_{2,\epsilon} (u, \omega) \big]$ is well-defined as an element of $W^{-1,1} ((0,T); W^{-2,1} (\mathbb{T}^3))$. We will denote the limit as $-D_{1} (u, \omega) + \frac{1}{2} D_{2} (u, \omega)$, which is a slight abuse of notation as the limits $D_1 (u,\omega)$ and $D_2 (u,\omega)$ might not in general be individually well-defined. Then by taking the distributional limit as $\epsilon \rightarrow 0$, we arrive at the following equation of local helicity balance
\begin{align}
&\int_0^T \bigg[ - \bigg\langle 2 \omega u, \partial_t \varphi \bigg\rangle + \bigg\langle  2 \nabla \cdot (p \omega) + \nabla \cdot \big[ 2 (u \cdot \omega) u - \lvert u \rvert^2 \omega \big], \varphi \bigg\rangle \bigg] dt \nonumber \\
&= \bigg\langle - D_{1} (u, \omega) + \frac{1}{2} D_{2} (u, \omega) ,  \varphi \bigg\rangle_{x,t}. \label{limithelicitybalance}
\end{align}
\end{proof}
We now prove a sufficient condition for the vanishing of the defect terms, which will then imply conservation of the (local) helicity density. We first prove a proposition which will be needed in the proof of the sufficient condition.
\begin{proposition} \label{defectidentitylemma}
Let $\epsilon > 0$ and let $(u, \omega)$ be a functional vorticity solution of the Euler equations such that $u \in L^3 ((0,T); W^{1/2+,3} (\mathbb{T}^3))$. Then the following identity holds
\begin{align}
&\int_0^T \bigg\langle - D_{1,\epsilon} (u, \omega) + \frac{1}{2} D_{2,\epsilon} (u, \omega) , \varphi \bigg\rangle_{B^{-1/2}_{1,3} \times B^{1/2}_{\infty,3/2}} dt \nonumber \\
&= \int_0^T \int_{\mathbb{R}^3} \nabla \phi_\epsilon (\xi) \bigg[- \langle \delta u (\delta \omega \cdot \delta u), \varphi \rangle_{W^{-1/2,1} \times W^{1/2,\infty}} + \frac{1}{2} \langle \delta \omega \lvert \delta u \rvert^2 , \varphi \rangle_{W^{-1/2,1} \times W^{1/2,\infty}} \bigg] d \xi dt. \label{defectidentity}
\end{align}
We note that we have restricted the dual space considered in the duality brackets, but this is sufficient for our purposes. 
\end{proposition}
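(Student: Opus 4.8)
The plan is to read the identity \eqref{defectidentity} as a Fubini-type interchange. By their defining $\xi$-integral representations in \eqref{D1epsilon} and \eqref{D2epsilon}, the defect terms $D_{1,\epsilon}$ and $D_{2,\epsilon}$ are integrals against the fixed, compactly supported weight $\nabla\phi_\epsilon$, and it is the mollification forms appearing after the first equality signs in \eqref{D1epsilon}--\eqref{D2epsilon} that manifestly place $D_{i,\epsilon}$ in $L^1((0,T);B^{-1/2}_{1,3})$, so that the left-hand pairing with $\varphi\in B^{1/2}_{\infty,3/2}$ is well-defined. The content of the proposition is then that the spatial duality pairing with $\varphi$ may be moved inside the integration in $\xi$. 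The only genuine difficulty is that, for fixed $\xi$, the factor $\delta\omega(\xi;\cdot,t)$ is merely a functional (an element of $W^{-1/2+,3}(\mathbb{T}^3)$, since $\omega=\nabla\times u$ with $u\in W^{1/2+,3}$), so both the fibrewise pairing $\langle\delta u(\delta\omega\cdot\delta u),\varphi\rangle$ and the interchange itself must be justified within the paradifferential framework rather than by the classical Fubini theorem.

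First I would check that the integrand on the right-hand side is well-defined and measurable in $\xi$. Using the increment-by-duality definition \eqref{dualityincrement} together with the paraproduct bounds of Lemma \ref{paraproductlemma} and the appendix inequalities \eqref{paradiffineq1}--\eqref{paradiffineq2}, the product $\delta u\,(\delta\omega\cdot\delta u)$ defines an element of $W^{-1/2,1}(\mathbb{T}^3)$ with
\[
\|\delta u(\delta\omega\cdot\delta u)\|_{W^{-1/2,1}}\lesssim \|\delta u(\xi;\cdot,t)\|_{W^{1/2+,3}}^2\,\|\delta\omega(\xi;\cdot,t)\|_{W^{-1/2+,3}},
\]
and similarly for $\delta\omega\,|\delta u|^2$. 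Since translation is strongly continuous on $W^{\pm 1/2+,3}(\mathbb{T}^3)$, the maps $\xi\mapsto\delta u(\xi;\cdot,t)$ and $\xi\mapsto\delta\omega(\xi;\cdot,t)$ are continuous into the respective spaces, so the scalar $\xi\mapsto\langle\delta u(\delta\omega\cdot\delta u),\varphi\rangle$ is continuous, hence measurable. Because $\nabla\phi_\epsilon$ is bounded with support in a fixed ball, and $\|\delta u(\xi;\cdot,t)\|_{W^{1/2+,3}}\le 2\|u(\cdot,t)\|_{W^{1/2+,3}}$, $\|\delta\omega(\xi;\cdot,t)\|_{W^{-1/2+,3}}\le 2\|\omega(\cdot,t)\|_{W^{-1/2+,3}}$ uniformly in $\xi$, the $\xi$-integral on the right converges absolutely and, after integrating in $t$ and invoking $u\in L^3((0,T);W^{1/2+,3})$, is finite.

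To justify the interchange I would argue by smooth approximation in the distributional factor. Replacing $\omega$ by its mollification $\omega^\delta\coloneqq\omega*\phi_\delta$, all quantities become smooth functions, the increments $\delta\omega^\delta$ are genuine functions, and the asserted identity reduces to the classical Fubini theorem on $\mathbb{T}^3\times\supp\nabla\phi_\epsilon\times(0,T)$; note that for smooth $\omega^\delta$ the $\xi$-integral and mollification forms of $D_{i,\epsilon}(u,\omega^\delta)$ coincide, so the two sides of the smooth identity match the two sides of \eqref{defectidentity}. I would then pass to the limit $\delta\to 0$. On the left, the mollification representations \eqref{D1epsilon}--\eqref{D2epsilon} and the paraproduct estimates show that $-D_{1,\epsilon}(u,\omega^\delta)+\tfrac12 D_{2,\epsilon}(u,\omega^\delta)\to -D_{1,\epsilon}(u,\omega)+\tfrac12 D_{2,\epsilon}(u,\omega)$ in $L^1((0,T);B^{-1/2}_{1,3})$, using $\omega^\delta\to\omega$ in $L^3((0,T);W^{-1/2+,3})$, so the pairing with $\varphi$ converges. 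On the right, for each fixed $\xi$ and $t$ the fibrewise pairing converges by continuity of the paraproduct, while the uniform bound above, with $\|\delta\omega^\delta\|_{W^{-1/2+,3}}\le 2\|\omega\|_{W^{-1/2+,3}}$ independent of $\delta$, supplies an integrable majorant, so dominated convergence applies.

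The step I expect to be the main obstacle is precisely this passage to the limit on the right-hand side: one must establish the pointwise-in-$\xi$ convergence of the distributional increment pairings simultaneously with a $\delta$-independent integrable majorant, which rests on the continuity of the paraproduct maps on the negative-regularity Sobolev scale and on the uniform translation bounds for $\delta\omega^\delta$. Matching the global pairing in $B^{-1/2}_{1,3}\times B^{1/2}_{\infty,3/2}$ on the left with the fibrewise pairing in $W^{-1/2,1}\times W^{1/2,\infty}$ on the right is a bookkeeping point handled by the embeddings recorded in the appendix, which is the reason the proposition restricts the dual spaces as noted in its statement.
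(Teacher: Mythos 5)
Your proposal is correct and follows essentially the same route as the paper: the paper's proof also mollifies the vorticity ($\omega^\eta = \omega * \phi_\eta$), applies the Fubini--Tonelli theorem in the smooth setting where the identity is classical, and then passes to the limit $\eta \to 0$ on both sides using the paradifferential estimates from the appendix (the explicit mollification forms \eqref{D1epsilon}--\eqref{D2epsilon} for the left-hand side, and paraproduct bounds with uniform-in-$\xi$ majorants for the right-hand side). Your write-up fills in details the paper leaves implicit (measurability in $\xi$, the dominated-convergence majorant), but the underlying argument is the same.
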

\begin{remark} \label{defectidentityremark}
We observe that the expression on the left-hand side of equation \eqref{defectidentity} is clearly well-defined (for any fixed $\epsilon > 0$), as it was written out explicitly in equations \eqref{D1epsilon} and \eqref{D2epsilon}. The fact that the right-hand side is well-defined can be seen in multiple ways. As was noted before, the difference $\delta \omega$ was defined by duality in equation \eqref{dualityincrement}. Therefore, we have
\begin{align*}
&\langle \delta u (\delta \omega \cdot \delta u), \varphi \rangle = \langle \delta \omega, (\delta u \cdot \varphi) \delta u \rangle \\
&= \bigg\langle \omega, (\delta u (-\xi ; \cdot , t) \cdot \varphi (\cdot - \xi, t)) \delta u (-\xi ; \cdot , t) - (\delta u (\xi ; \cdot, t) \cdot \varphi (\cdot, t)) \delta u (\xi ; \cdot, t) \bigg\rangle \\
&\langle \delta \omega \lvert \delta u \rvert^2 , \varphi \rangle = \langle \delta \omega , \lvert \delta u \rvert^2 \varphi \rangle = \bigg\langle \omega, \lvert \delta u (-\xi ; \cdot, t) \rvert^2 \varphi (\cdot - \xi , t) - \lvert \delta u ( \xi ; \cdot, t) \rvert^2 \varphi (\cdot, t) \bigg\rangle.
\end{align*}
Inserting these expressions into the right-hand side of equation \eqref{defectidentity} yields
\begin{align}
&\int_0^T \int_{\mathbb{R}^3} \nabla \phi_\epsilon (\xi) \bigg[- \bigg\langle \omega, (\delta u (-\xi ; \cdot , t) \cdot \varphi (\cdot - \xi, t)) \delta u (-\xi ; \cdot , t) - (\delta u (\xi ; \cdot, t) \cdot \varphi (\cdot, t)) \delta u (\xi ; \cdot, t) \bigg\rangle \nonumber \\
&+ \frac{1}{2} \bigg\langle \omega, \lvert \delta u (-\xi ; \cdot, t) \rvert^2 \varphi (\cdot - \xi , t) - \lvert \delta u ( \xi ; \cdot, t) \rvert^2 \varphi (\cdot, t) \bigg\rangle \bigg] d \xi dt. \label{dualdiffquotient}
\end{align}
Now one can see that the integral of the duality bracket together with $\nabla \phi_\epsilon$ with respect to $\xi$ is well-defined as follows. One option is to view $\nabla \phi_\epsilon$ as an element of $\mathcal{D}' (\mathbb{R}^3 \times (0,T))$ and then apply the distributional Fubini-Tonelli theorem, which can be found in \cite[~Theorem IV, Section 3, Chapter IV]{schwartz}. Note that the result from \cite{schwartz} does not apply verbatim, as $\delta u \notin \mathcal{D} (\mathbb{T}^3 \times (0,T); \mathbb{R}^3)$. However, the proof can be adapted to suit this particular case. \\
Another option is that the right-hand side of \eqref{defectidentity} can be interpreted by observing that the duality brackets in equation \eqref{dualdiffquotient} are measurable as functions of $\xi$. This can be proven by noting that the translation operator is continuous in $\xi$, and the fact that both $\delta u$ and $\varphi$ have Sobolev regularity. As the duality brackets are bounded uniformly in $\xi$, it follows that the integral in equation \eqref{dualdiffquotient} is well-defined.
\end{remark}
We now turn to the proof of Proposition \ref{defectidentitylemma}.
\begin{proof}
We introduce a mollification of the vorticity (called $\omega^\eta = \omega * \phi_\eta$) and we observe that the following identity holds (by the Fubini-Tonelli theorem)
\begingroup
\allowdisplaybreaks
\begin{align}
&\int_0^T \bigg\langle - D_{1,\epsilon} (u, \omega^\eta) + \frac{1}{2} D_{2,\epsilon} (u, \omega^\eta) , \varphi \bigg\rangle dt = \int_0^T \int_{\mathbb{R}^3} \nabla \phi_\epsilon (\xi) \bigg[- \langle \delta u (\delta \omega^\eta \cdot \delta u), \varphi \rangle_{W^{-1/2,1} \times W^{1/2,\infty}} \nonumber \\
&+ \frac{1}{2} \langle \delta \omega^\eta \lvert \delta u \rvert^2 , \varphi \rangle_{W^{-1/2,1} \times W^{1/2,\infty}} \bigg] d \xi dt. \nonumber
\end{align}
\endgroup
Then by applying standard estimates from the paradifferential calculus as before (see Appendix \ref{paradifferentialappendix}), one obtains
\begin{align}
&\int_0^T \int_{\mathbb{R}^3} \nabla \phi_\epsilon (\xi) \bigg[- \langle \delta u (\delta \omega^\eta \cdot \delta u), \varphi \rangle_{W^{-1/2,1} \times W^{1/2,\infty}} + \frac{1}{2} \langle \delta \omega^\eta \lvert \delta u \rvert^2 , \varphi \rangle_{W^{-1/2,1} \times W^{1/2,\infty}} \bigg] d \xi dt \nonumber \\
&\xrightarrow[]{\eta \rightarrow 0} \int_0^T \int_{\mathbb{R}^3} \nabla \phi_\epsilon (\xi) \bigg[- \langle \delta u (\delta \omega \cdot \delta u), \varphi \rangle_{W^{-1/2,1} \times W^{1/2,\infty}} + \frac{1}{2} \langle \delta \omega \lvert \delta u \rvert^2 , \varphi \rangle_{W^{-1/2,1} \times W^{1/2,\infty}} \bigg] d \xi dt. \label{etalimit1}
\end{align}
Moreover, by using the explicit form of the defect terms $D_{1,\epsilon} - \frac{1}{2} D_{2,\epsilon}$ in equations \eqref{D1epsilon} and \eqref{D2epsilon} one can establish that
\begin{align}
\int_0^T \bigg\langle - D_{1,\epsilon} (u, \omega^\eta) + \frac{1}{2} D_{2,\epsilon} (u, \omega^\eta) , \varphi \bigg\rangle dt \xrightarrow[]{\eta \rightarrow 0 } \int_0^T \bigg\langle - D_{1,\epsilon} (u, \omega) + \frac{1}{2} D_{2,\epsilon} (u, \omega) , \varphi \bigg\rangle dt, \label{etalimit2}
\end{align}
where we note that the convergence rates in limits \eqref{etalimit1} and \eqref{etalimit2} depend on $\epsilon > 0$. This completes the proof.
\end{proof}
Next we state and prove the sufficient condition for the defect terms to be zero.
\begin{proposition} \label{sufficientconditioneuler}
Let $(u, \omega)$ be a functional vorticity solution of the Euler equations which satisfies the following inequalities
\begin{align}
&\bigg\lvert \langle \delta u (\delta \omega \cdot \delta u), \varphi \rangle_{W^{-1/2,1} \times W^{1/2,\infty}} \bigg\rvert \leq C(t) \lvert \xi \rvert \sigma (\lvert \xi \rvert) \lVert \varphi \rVert_{W^{1/2,\infty}}, \label{defectineq1} \\
&\bigg\lvert \langle \delta \omega \lvert \delta u \rvert^2 , \varphi \rangle_{W^{-1/2,1} \times W^{1/2,\infty}} \bigg\rvert \leq C(t) \lvert \xi \rvert \sigma (\lvert \xi \rvert) \lVert \varphi \rVert_{W^{1/2,\infty}}, \label{defectineq2}
\end{align}
where $C \in L^1 (0,T)$ and $\sigma \in L^\infty_{loc} (\mathbb{R})$ with the property that $\sigma (\lvert \xi \rvert)\rightarrow 0$ as $\lvert \xi \rvert \rightarrow 0$. Then the individual limits $D_{1} (u, \omega)$ and $D_2 (u, \omega)$ exist as distributions and are equal to zero. Consequently, this implies that the solution conserves the (local) helicity density. 
\end{proposition}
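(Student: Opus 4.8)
\section*{Proof proposal}

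The plan is to feed the hypotheses \eqref{defectineq1}--\eqref{defectineq2} directly into the integral representation of the defect terms furnished by Proposition \ref{defectidentitylemma}, and then to exploit the scaling of the mollifier $\phi_\epsilon$ to drive the resulting bound to zero. The first point to settle is that the identity of Proposition \ref{defectidentitylemma}, stated there only for the combination $-D_{1,\epsilon}+\tfrac12 D_{2,\epsilon}$, actually splits term by term. Indeed, the mollification $\omega^\eta$, the Fubini--Tonelli step, and the limit $\eta\to0$ employed in its proof are all linear in $\omega$ and act on the two defect contributions independently (recall that \eqref{D1epsilon} and \eqref{D2epsilon} are each linear in $\omega$ for fixed $u$), so for every fixed $\epsilon>0$ one obtains the two separate identities
\begin{align*}
\int_0^T \langle D_{1,\epsilon}(u,\omega), \varphi\rangle\, dt &= \int_0^T \int_{\mathbb{R}^3} \nabla\phi_\epsilon(\xi)\cdot\langle \delta u\,(\delta\omega\cdot\delta u), \varphi\rangle_{W^{-1/2,1}\times W^{1/2,\infty}}\, d\xi\, dt, \\
\int_0^T \langle D_{2,\epsilon}(u,\omega), \varphi\rangle\, dt &= \int_0^T \int_{\mathbb{R}^3} \nabla\phi_\epsilon(\xi)\cdot\langle \delta\omega\,\lvert\delta u\rvert^2, \varphi\rangle_{W^{-1/2,1}\times W^{1/2,\infty}}\, d\xi\, dt.
\end{align*}

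Next I would estimate the first of these; the second is identical. Taking absolute values, inserting hypothesis \eqref{defectineq1}, and bounding the smooth compactly supported test function by $M_\varphi := \sup_{t\in(0,T)}\lVert\varphi(\cdot,t)\rVert_{W^{1/2,\infty}}<\infty$ gives, via Tonelli,
\[
\left\lvert \int_0^T \langle D_{1,\epsilon}(u,\omega), \varphi\rangle\, dt \right\rvert \leq M_\varphi \left( \int_0^T C(t)\, dt \right) \int_{\mathbb{R}^3} \lvert\nabla\phi_\epsilon(\xi)\rvert\, \lvert\xi\rvert\, \sigma(\lvert\xi\rvert)\, d\xi,
\]
where $\int_0^T C(t)\, dt<\infty$ since $C\in L^1(0,T)$. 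The decisive step is the scaling substitution $\xi=\epsilon\zeta$, using $\nabla\phi_\epsilon(\xi)=\epsilon^{-4}(\nabla\phi)(\xi/\epsilon)$ and $d\xi=\epsilon^3\, d\zeta$, which collapses the $\epsilon^{-4}$, the factor $\lvert\xi\rvert=\epsilon\lvert\zeta\rvert$, and the Jacobian into
\[
\int_{\mathbb{R}^3} \lvert\nabla\phi_\epsilon(\xi)\rvert\, \lvert\xi\rvert\, \sigma(\lvert\xi\rvert)\, d\xi = \int_{\mathbb{R}^3} \lvert(\nabla\phi)(\zeta)\rvert\, \lvert\zeta\rvert\, \sigma(\epsilon\lvert\zeta\rvert)\, d\zeta.
\]
Since $\nabla\phi$ is supported in some ball $\{\lvert\zeta\rvert\le R\}$ and $\sigma\in L^\infty_{loc}(\mathbb{R})$, the integrand is dominated by the integrable function $\lvert(\nabla\phi)(\zeta)\rvert\,\lvert\zeta\rvert\,\lVert\sigma\rVert_{L^\infty([0,R])}$ for all $\epsilon\le1$, while $\sigma(\epsilon\lvert\zeta\rvert)\to0$ pointwise as $\epsilon\to0$ by the assumption on $\sigma$. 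Dominated convergence then forces this integral to vanish in the limit, so $\int_0^T\langle D_{1,\epsilon},\varphi\rangle\, dt\to0$ for every $\varphi$; that is, the individual limit $D_1(u,\omega)=\lim_{\epsilon\to0}D_{1,\epsilon}(u,\omega)$ exists in $\mathcal{D}'$ and equals zero. The same argument with \eqref{defectineq2} gives $D_2(u,\omega)=0$.

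Finally, substituting $D_1=D_2=0$ into the limiting balance \eqref{limithelicitybalance} annihilates its right-hand side, leaving
\[
2\partial_t(u\cdot\omega) + 2\nabla\cdot(p\omega) + \nabla\cdot\big[2(u\cdot\omega)u - \lvert u\rvert^2\omega\big] = 0 \quad \text{in } \mathcal{D}'(\mathbb{T}^3\times(0,T)),
\]
so the helicity density evolves purely through the flux (divergence) terms; integrating over the torus (equivalently, testing against spatially constant functions, admissible on $\mathbb{T}^3$) makes the flux terms vanish and yields $\tfrac{d}{dt}\mathcal{H}(t)=0$, i.e.\ conservation of the (local) helicity density. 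I expect the only genuine subtlety to be the opening step: justifying that the identity of Proposition \ref{defectidentitylemma} splits into the two separate representations above, which is precisely what is needed to conclude that each defect limit vanishes on its own rather than merely their linear combination. Everything afterward is a one-line scaling computation followed by dominated convergence.
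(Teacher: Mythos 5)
Your proposal is correct and follows essentially the same route as the paper: feed the hypotheses into the integral representation of Proposition \ref{defectidentitylemma}, rescale via $\xi=\epsilon z$, and apply dominated convergence. In fact you are slightly more careful than the paper's own proof, which only estimates the combination $-D_{1,\epsilon}+\tfrac12 D_{2,\epsilon}$ and leaves implicit both the term-by-term splitting (needed for the claim that the \emph{individual} limits vanish) and the passage from $D_1=D_2=0$ to conservation, which the paper defers to Theorem \ref{conservationeuler}.
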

\begin{proof}

By using Proposition \ref{defectidentitylemma} as well as inequalities \eqref{defectineq1} and \eqref{defectineq2}, we are able to estimate
\begin{align*}
&\bigg\lvert \int_0^T \bigg\langle - D_{1,\epsilon} (u, \omega) + \frac{1}{2} D_{2,\epsilon} (u, \omega) , \varphi \bigg\rangle dt \bigg\rvert \leq \int_0^T C(t) \lVert \varphi \rVert_{W^{1/2,\infty}}  \int_{\mathbb{R}^3} \lvert \nabla \phi_\epsilon (\xi) \rvert \lvert \xi \rvert \sigma (\lvert \xi \rvert) d \xi dt \\
&= \int_0^T \bigg[ C(t) \lVert \varphi \rVert_{W^{1/2,\infty}}  \int_{\mathbb{R}^3} \lvert \nabla \phi (z) \rvert \lvert z \rvert \sigma (\epsilon \lvert z \rvert) d z \bigg] dt \xrightarrow[]{\epsilon \rightarrow 0} 0,
\end{align*}
where in the second line we have made the change of variable $\xi = \epsilon z$ and we have then used the Lebesgue dominated convergence theorem. 
\end{proof}
Finally, we prove the conservation of (global) helicity under the assumptions of Proposition \ref{sufficientconditioneuler}. 
\begin{theorem} \label{conservationeuler}
Let $(u, \omega)$ be a functional vorticity solution of the Euler equations such that $D_1 (u, \omega) = D_2 (u, \omega) = 0$ and $u \in L^3 ((0,T); W^{1/2+,3} (\mathbb{T}^3))$. Then the helicity is conserved, in particular for almost every $t_1, t_2 \in (0,T)$ it holds that
\begin{equation}
\mathcal{H} (t_1) = \mathcal{H} (t_2).
\end{equation}
\end{theorem}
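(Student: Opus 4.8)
The plan is to integrate the local helicity balance of Theorem \ref{helicitybalance} against a purely time-dependent test function, thereby annihilating all the spatial flux terms and reducing helicity conservation to a distributional ODE for $\mathcal{H}$. First I would substitute the hypothesis $D_1(u,\omega) = D_2(u,\omega) = 0$ into equation \eqref{localhelicitybalance}. The defect pairing $\langle D_1 - \frac{1}{2} D_2, \varphi \rangle_{x,t}$ then vanishes identically, so that for every $\varphi \in \mathcal{D}(\mathbb{T}^3 \times (0,T); \mathbb{R})$ one is left with
\begin{equation*}
\int_0^T \Big[ - \big\langle 2 \omega u, \partial_t \varphi \big\rangle + \big\langle 2 \nabla \cdot (p \omega) + \nabla \cdot [ 2 (u\cdot\omega) u - |u|^2 \omega ], \varphi \big\rangle \Big]\, dt = 0.
\end{equation*}

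Next I would specialise to test functions of the form $\varphi(x,t) = \chi(t)$ with $\chi \in \mathcal{D}((0,T))$; since constant functions are smooth and periodic, such $\varphi$ is an admissible element of $\mathcal{D}(\mathbb{T}^3 \times (0,T))$ on the torus. For this choice $\nabla \varphi \equiv 0$, so each spatial-divergence term pairs to zero — on $\mathbb{T}^3$ there is no boundary, and $\langle \nabla \cdot F, \chi(t) \rangle = - \langle F, \nabla \chi(t) \rangle = 0$ for the (negative Besov) fields $F = p\omega$ and $F = (u\cdot\omega)u - \frac{1}{2}|u|^2\omega$. Recognising that pairing the scalar density $2\, u\cdot\omega$ against the spatially constant function $\chi'(t)$ returns $2 \chi'(t) \mathcal{H}(t)$, where $\mathcal{H}(t) = \langle \omega(t), u(t) \rangle_{H^{-1/2} \times H^{1/2}}$, the identity collapses to $\int_0^T \mathcal{H}(t)\, \chi'(t)\, dt = 0$ for all $\chi \in \mathcal{D}((0,T))$.

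Finally, I would use that $\mathcal{H} \in L^\infty((0,T))$, which holds because $u \in L^\infty((0,T); H^{1/2+})$ and $\omega \in L^\infty((0,T); H^{-1/2+})$ give the uniform bound $|\mathcal{H}(t)| \leq \lVert \omega(t) \rVert_{H^{-1/2}} \lVert u(t) \rVert_{H^{1/2}}$; this is precisely the point of requiring $L^\infty$ rather than $L^2$ control in time. Thus $\mathcal{H}$ is a genuine distribution on $(0,T)$, and the relation above says exactly that $\partial_t \mathcal{H} = 0$ in $\mathcal{D}'((0,T))$. Hence $\mathcal{H}$ coincides almost everywhere with a constant, yielding $\mathcal{H}(t_1) = \mathcal{H}(t_2)$ for a.e. $t_1, t_2 \in (0,T)$.

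I expect the only real subtlety to lie in the second step: one must verify that the paraproduct-defined helicity density $u\cdot\omega$, when tested against a spatially constant function, reproduces exactly the global helicity $\mathcal{H}(t) = \langle \omega(t), u(t) \rangle$, and that the low-regularity fluxes genuinely annihilate against a constant. Both facts follow from the fact that these objects are constructed as limits of their smooth mollifications (for which the identities and the integration by parts are classical) in spaces — such as $L^\infty((0,T); B^{-1/2}_{1,1}(\mathbb{T}^3))$ — that pair continuously with smooth functions, so the vanishing of the mollified divergence integrals passes to the limit. Once this consistency is secured, the remainder is routine distribution theory.
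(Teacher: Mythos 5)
Your proposal is correct and follows essentially the same route as the paper: both proofs set the defect terms to zero in the local helicity balance of Theorem \ref{helicitybalance} and then test against spatially constant, purely time-dependent test functions so that all flux terms annihilate, leaving the statement that the paraproduct density $u\cdot\omega$ paired with $1$ (i.e.\ the duality pairing $\langle \omega, u\rangle_{H^{-1/2}\times H^{1/2}}$, well-defined pointwise in time thanks to the $L^\infty$ temporal regularity) has vanishing distributional time derivative. The only cosmetic difference is the endgame: the paper plugs in the specific test functions $\varphi(t)=\int_0^t(\chi_\delta(t'-t_1)-\chi_\delta(t'-t_2))\,dt'$ and invokes the Lebesgue differentiation theorem, whereas you use arbitrary $\chi\in\mathcal{D}((0,T))$ and the standard lemma that a distribution with zero derivative is a.e.\ constant — both are equally valid.
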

\begin{proof}
Under the assumption that $D_1 (u, \omega) = D_2 (u, \omega) = 0$ (which occurs for example if the conditions of Proposition \ref{sufficientconditioneuler} are satisfied), then by Theorem \ref{helicitybalance} we end up with the following equation of local helicity balance (for $\varphi \in \mathcal{D} (\mathbb{T}^3 \times (0,T); \mathbb{R})$)
\begin{equation*}
\int_0^T \bigg[ - \bigg\langle 2 \omega u, \partial_t \varphi \bigg\rangle + \bigg\langle  2 \nabla \cdot (p \omega) + \nabla \cdot \big[ 2 (u \cdot \omega) u - \lvert u \rvert^2 \omega \big] , \varphi \bigg\rangle \bigg] dt = 0.
\end{equation*}
Then we take $\chi \in C_c^\infty (\mathbb{R}; \mathbb{R})$ to be a standard smooth mollifier such that $\int_{\mathbb{R}} \chi (t) dt = 1$, $\chi_\delta (t) = \delta^{-1} \chi (t/\delta)$ and $\supp \chi \subset (-1,1)$. Then we take the following test function
\begin{equation*}
\varphi (x,t) = \int_0^t \big( \chi_\delta (t' - t_1) - \chi_\delta (t' - t_2) \big) dt'.
\end{equation*}
We observe that $\varphi (t) = 0$ if $t \in (0,t_1 - \delta) \cup (t_2 + \delta)$ (without loss of generality we take $t_1 < t_2$). Moreover, for $\delta$ sufficiently small we have that $\varphi (t) = 1$ if $t \in (t_1 + \delta, t_2 - \delta)$. Now since $\nabla \varphi \equiv 0$, we obtain
\begin{equation*}
\int_0^T \langle \omega, u \rangle_{H^{-1/2} \times H^{1/2}} \; \chi_\delta (t - t_1) dt = \int_0^T \langle \omega, u \rangle_{H^{-1/2} \times H^{1/2}} \; \chi_\delta (t - t_2) dt.
\end{equation*}
Now by applying the Lebesgue differentiation theorem when taking the limit $\delta \rightarrow 0$ we obtain (for almost every $t_1, t_2 \in (0,T)$)
\begin{equation}
\langle \omega (\cdot, t_1) , u (\cdot, t_1) \rangle_{H^{-1/2} \times H^{1/2}} = \langle \omega (\cdot, t_2), u (\cdot, t_2) \rangle_{H^{-1/2} \times H^{1/2}}.
\end{equation}
Note that we have used the $L^\infty$ regularity in time to make sense of the duality bracket pointwise in time. Therefore we conclude that the functional vorticity solution $(u,\omega)$ conserves the (global) helicity.
\end{proof}
Now we will prove that the sufficient condition given in Proposition \ref{sufficientconditioneuler} is implied by the sufficient conditions for (global) helicity conservation given in \cite{cheskidovconstantin,derosa}. 
\begin{proposition} \label{conservationbesov}
Let $(u, \omega)$ be a functional vorticity solution of the Euler equations such that $u \in L^3 ((0,T); B^{2/3+}_{3,\infty} (\mathbb{T}^3))$, then the (global) helicity is conserved.
\end{proposition}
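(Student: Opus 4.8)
The plan is to reduce everything to the pointwise increment inequalities \eqref{defectineq1}--\eqref{defectineq2} of Proposition \ref{sufficientconditioneuler}: once these are established with a modulus $\sigma(\lvert\xi\rvert)\to 0$ and an $L^1(0,T)$ prefactor, Proposition \ref{sufficientconditioneuler} forces $D_1(u,\omega)=D_2(u,\omega)=0$, and Theorem \ref{conservationeuler} then yields conservation of the global helicity. First I would check that the hypothesis is compatible with the functional-vorticity framework: by the Besov embeddings $B^{2/3+}_{3,\infty}\hookrightarrow B^{1/2+}_{3,1}\hookrightarrow W^{1/2+,3}$ one has $u\in L^3((0,T);W^{1/2+,3}(\mathbb{T}^3))$, so Theorem \ref{helicitybalance} applies, while the Biot--Savart relation gives $\omega=\nabla\times u\in L^3((0,T);B^{-1/3+}_{3,\infty}(\mathbb{T}^3))$.

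The core estimate is then carried out for fixed $t$ and fixed $\xi$. Using the duality definition \eqref{dualityincrement} of $\delta\omega$ exactly as in Remark \ref{defectidentityremark}, I rewrite $\langle\delta u(\delta\omega\cdot\delta u),\varphi\rangle=\langle\delta\omega,(\delta u\cdot\varphi)\delta u\rangle$ and $\langle\delta\omega\lvert\delta u\rvert^2,\varphi\rangle=\langle\delta\omega,\lvert\delta u\rvert^2\varphi\rangle$, so that both quantities take the form $\langle\delta\omega,\varphi\,w\rangle$ where $w$ is a product of two velocity increments. I then pair by Besov duality, placing $\delta\omega$ in $B^{-1/3+}_{3,\infty}$ (with no increment gain, bounded by $\lVert\omega\rVert_{B^{-1/3+}_{3,\infty}}\lesssim\lVert u\rVert_{B^{2/3+}_{3,\infty}}$) against $\varphi w$ in the dual space $B^{1/3-}_{3/2,1}$.

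The heart of the matter is the product bound $\lVert\varphi w\rVert_{B^{1/3-}_{3/2,1}}\lesssim\lVert\varphi\rVert_{W^{1/2,\infty}}\,\lvert\xi\rvert^{1+}\,\lVert u\rVert_{B^{2/3+}_{3,\infty}}^2$. Here I would invoke the increment estimates from Appendix \ref{paradifferentialappendix} in two regimes, namely $\lVert\delta u(\xi)\rVert_{L^3}\lesssim\lvert\xi\rvert^{2/3+}\lVert u\rVert_{B^{2/3+}_{3,\infty}}$ and $\lVert\delta u(\xi)\rVert_{B^{\rho}_{3,\infty}}\lesssim\lvert\xi\rvert^{2/3-\rho+}\lVert u\rVert_{B^{2/3+}_{3,\infty}}$, combined with a Bony paraproduct/product rule to obtain $\lVert w\rVert_{B^{\rho}_{3/2,\infty}}\lesssim\lvert\xi\rvert^{4/3-\rho+}\lVert u\rVert^2$. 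Multiplication by $\varphi\in W^{1/2,\infty}\hookrightarrow B^{1/2}_{\infty,\infty}$ preserves the regularity $\rho<1/2$ and the power of $\lvert\xi\rvert$, and the embedding $B^{\rho}_{3/2,\infty}\hookrightarrow B^{1/3}_{3/2,1}$ (valid for $\rho$ slightly above $1/3$) upgrades the third index to $q=1$. Choosing $\rho$ just above $1/3$ leaves a net decay $\lvert\xi\rvert^{1+\delta}$ for some $\delta>0$, so that $\sigma(\lvert\xi\rvert)=\lvert\xi\rvert^{\delta}$; this is the algebraic manifestation of $3\cdot\tfrac23-1>1$. The prefactor is $C(t)\sim\lVert u(t)\rVert^3_{B^{2/3+}_{3,\infty}}\in L^1(0,T)$ since $u\in L^3$, and by translation invariance of the norms the same bookkeeping controls both the $+\xi$ and $-\xi$ contributions of Remark \ref{defectidentityremark} as well as both inequalities \eqref{defectineq1} and \eqref{defectineq2}.

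The step requiring the most care is this critically scaled product estimate: the regularity split must be tuned so that $\delta\omega$ sits at $-1/3+\epsilon$ while $\varphi w$ sits at $1/3-\epsilon$, and one must verify that the Bony decomposition of the triple product closes without logarithmic loss, the resonant (diagonal) paraproduct term being the delicate one. It is precisely here that the strict inequality $s>2/3$ (rather than $s=2/3$) is consumed in order to retain a positive power $\lvert\xi\rvert^{\delta}$. A secondary technical point, namely the measurability and $\xi$-integrability needed to feed the estimate back into the defect via $\int\nabla\phi_\epsilon(\xi)\,(\cdot)\,d\xi$, is already handled by Remark \ref{defectidentityremark} and Proposition \ref{defectidentitylemma}, after which the change of variables $\xi=\epsilon z$ and dominated convergence (as in the proof of Proposition \ref{sufficientconditioneuler}) complete the argument.
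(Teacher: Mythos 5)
Your proposal is correct and follows essentially the same route as the paper: both reduce the statement to verifying the hypotheses of Proposition \ref{sufficientconditioneuler} (and then conclude via Theorem \ref{conservationeuler}), using the identical regularity split in which $\delta\omega$ is measured in $B^{-1/3+\theta}_{3,\infty}$ with no increment gain while the two velocity increments carry the total gain $\lvert \xi \rvert^{1+\theta}$ through Lemma \ref{diffquotientlemma} and the Bony product estimates of Appendix \ref{paradifferentialappendix}. The only cosmetic difference is that you place the test function inside the bracket and pair $\delta\omega$ against $\varphi\, w$ by $B^{-1/3+}_{3,\infty}\times B^{1/3-}_{3/2,1}$ duality (mirroring Remark \ref{defectidentityremark}), whereas the paper estimates $\delta u(\delta\omega\cdot\delta u)$ in $W^{-1/3,1}$ and pairs it against $\varphi\in W^{1/3,\infty}$; the exponent bookkeeping, and the way the strict inequality $2/3+\theta$ produces $\sigma(\lvert\xi\rvert)=\lvert\xi\rvert^{\theta}$, are the same.
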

\begin{proof}
Let $u \in L^3 ((0,T); B^{2/3+\theta}_{3,\infty} (\mathbb{T}^3))$ for some $\theta > 0$, then $\omega \in L^3 ((0,T); B^{-1/3+\theta}_{3,\infty} (\mathbb{T}^3))$. By applying inequality \eqref{paradiffineq2} we know that $\lvert \delta u \rvert^2 \in L^{3/2} ((0,T); B^{2/3+\theta}_{3/2,\infty} (\mathbb{T}^3))$, and similarly for $\delta u \otimes \delta u$ (which we will use in the estimate for the limit $D_1 (u,\omega)$). 

We can then obtain the estimates (by using the identification $W^{-1/3,1} (\mathbb{T}^3) = B^{-1/3}_{1,1} (\mathbb{T}^3)$ as well as inequalities \eqref{paradiffineq1} and \eqref{paradiffineq3}) 
\begin{align*}
\bigg\lvert \langle \delta u (\delta \omega \cdot \delta u), \varphi \rangle_{W^{-1/3,1} \times W^{1/3,\infty}} \bigg\rvert &\leq \lVert \delta u (\delta \omega \cdot \delta u) \rVert_{W^{-1/3,1}} \lVert \varphi \rVert_{W^{1/3,\infty}} \\
&\leq \lVert \delta \omega \rVert_{B^{-1/3+\theta}_{3,\infty}} \lVert \delta u \otimes \delta u \rVert_{B^{1/3}_{3/2,\infty}} \lVert \varphi \rVert_{W^{1/3,\infty}} \\
&\leq \lVert \delta \omega \rVert_{B^{-1/3+\theta}_{3,\infty}} \lVert \delta u \rVert_{B^{\theta/2}_{3,\infty}} \lVert \delta u \rVert_{B^{1/3 + \theta/2}_{3,\infty}} \lVert \varphi \rVert_{W^{1/3,\infty}} \\
&\leq \lvert \xi \rvert^{1 + \theta} \lVert \omega \rVert_{B^{-1/3+\theta}_{3,\infty}} \lVert u \rVert_{B^{2/3+\theta}_{3,\infty}}^2 \lVert \varphi \rVert_{W^{1/3,\infty}}, \\
\bigg\lvert \langle \delta \omega \lvert \delta u \rvert^2 , \varphi \rangle_{W^{-1/3,1} \times W^{1/3,\infty}} \bigg\rvert &\leq \lVert \lvert \delta u \rvert^2 \delta \omega \rVert_{W^{-1/3,1}} \lVert \varphi \rVert_{W^{1/3,\infty}} \leq \lVert \delta \omega \rVert_{B^{-1/3+\theta}_{3,\infty}} \lVert \lvert \delta u \rvert^2 \rVert_{B^{1/3}_{3/2,\infty}} \lVert \varphi \rVert_{W^{1/3,\infty}} \\
&\leq \lVert \delta \omega \rVert_{B^{-1/3+\theta}_{3,\infty}} \lVert \delta u \rVert_{B^{\theta/2}_{3,\infty}} \lVert \delta u \rVert_{B^{1/3 + \theta/2}_{3,\infty}} \lVert \varphi \rVert_{W^{1/3,\infty}} \\
&\leq \lvert \xi \rvert^{1 + \theta} \lVert \omega \rVert_{B^{-1/3+\theta}_{3,\infty}} \lVert u \rVert_{B^{2/3+\theta}_{3,\infty}}^2 \lVert \varphi \rVert_{W^{1/3,\infty}},
\end{align*}
where in the last line of both estimates we have used Lemma \ref{diffquotientlemma}.
Hence the conditions of Proposition \ref{sufficientconditioneuler} are satisfied (as we can take $\sigma (\lvert \xi \rvert) \coloneqq \lvert \xi \rvert^\theta$) from which it follows that $D_1 (u,\omega) = D_2 (u,\omega) = 0$, and therefore the (global) helicity conservation holds due to Theorem \ref{conservationeuler}.
\end{proof}
\begin{proposition}
Suppose that $(u,\omega)$ is a functional vorticity solution of the Euler equations and assume that $u \in L^{2r} ((0,T); W^{\beta, 2p} (\mathbb{T}^3))$ and $\omega \in L^k ((0,T); W^{\alpha,q} (\mathbb{T}^3))$, for $0 < \alpha, \beta < 1$ and $1 \leq p, q, r, k \leq \infty$, such that
\begin{equation*}
\frac{1}{p} + \frac{1}{q} = \frac{1}{r} + \frac{1}{k} = 1, \quad 2 \beta + \alpha > 1.
\end{equation*}
Then the functional vorticity solution conserves helicity.
\end{proposition}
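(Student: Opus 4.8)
The plan is to verify that the hypotheses of Proposition \ref{sufficientconditioneuler} are met, with the modulus of continuity $\sigma(\lvert\xi\rvert)=\lvert\xi\rvert^{2\beta+\alpha-1}$, and then to invoke Theorem \ref{conservationeuler}, exactly mirroring the strategy of Proposition \ref{conservationbesov}. The essential simplification here is that $\omega\in W^{\alpha,q}(\mathbb{T}^3)$ with $\alpha>0$ is a genuine function, so the duality pairings defining the defect terms reduce to ordinary integrals and may be estimated by Hölder's inequality directly, without recourse to the full paraproduct machinery used in Proposition \ref{conservationbesov}.

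First I would record the difference-quotient bounds supplied by Lemma \ref{diffquotientlemma}: for $0<\beta,\alpha<1$ and uniformly in $\xi$ one has $\lVert\delta u(\xi;\cdot,t)\rVert_{L^{2p}}\lesssim\lvert\xi\rvert^\beta\lVert u(t)\rVert_{W^{\beta,2p}}$ and $\lVert\delta\omega(\xi;\cdot,t)\rVert_{L^q}\lesssim\lvert\xi\rvert^\alpha\lVert\omega(t)\rVert_{W^{\alpha,q}}$. The spatial conjugacy $\tfrac1p+\tfrac1q=1$ is precisely what makes the Hölder exponents close: since $\tfrac{1}{2p}+\tfrac{1}{2p}+\tfrac1q=1$ and $\tfrac1p+\tfrac1q=1$, we get $\lVert\delta u\,(\delta\omega\cdot\delta u)\rVert_{L^1}\leq\lVert\delta\omega\rVert_{L^q}\lVert\delta u\rVert_{L^{2p}}^2$ and $\lVert\lvert\delta u\rvert^2\delta\omega\rVert_{L^1}\leq\lVert\delta\omega\rVert_{L^q}\lVert\delta u\rVert_{L^{2p}}^2$. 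Inserting the difference-quotient bounds yields, for both defect integrands, the pointwise-in-time estimate $\lesssim\lvert\xi\rvert^{2\beta+\alpha}\lVert\omega(t)\rVert_{W^{\alpha,q}}\lVert u(t)\rVert_{W^{\beta,2p}}^2$.

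Next I would pair against the test function, using $\lvert\langle\delta u\,(\delta\omega\cdot\delta u),\varphi\rangle\rvert\leq\lVert\delta u\,(\delta\omega\cdot\delta u)\rVert_{L^1}\lVert\varphi\rVert_{L^\infty}\leq\lVert\delta u\,(\delta\omega\cdot\delta u)\rVert_{L^1}\lVert\varphi\rVert_{W^{1/2,\infty}}$ (and likewise for the second bracket). This places us precisely in the form of inequalities \eqref{defectineq1}--\eqref{defectineq2} with $C(t)\coloneqq\lVert u(t)\rVert_{W^{\beta,2p}}^2\lVert\omega(t)\rVert_{W^{\alpha,q}}$ and $\sigma(\lvert\xi\rvert)=\lvert\xi\rvert^{2\beta+\alpha-1}$; since $2\beta+\alpha>1$ we have $\sigma\in L^\infty_{loc}$ with $\sigma(\lvert\xi\rvert)\to0$ as $\lvert\xi\rvert\to0$. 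The temporal conjugacy $\tfrac1r+\tfrac1k=1$, together with $u\in L^{2r}((0,T);W^{\beta,2p})$ (so that $\lVert u\rVert_{W^{\beta,2p}}^2\in L^r(0,T)$) and $\omega\in L^k((0,T);W^{\alpha,q})$, gives $C\in L^1(0,T)$ by Hölder's inequality in time. Proposition \ref{sufficientconditioneuler} then yields $D_1(u,\omega)=D_2(u,\omega)=0$, i.e. conservation of the local helicity density, and Theorem \ref{conservationeuler} upgrades this to conservation of the global helicity.

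The bookkeeping of the exponents is entirely routine; the one point requiring care is the passage from local to global conservation via Theorem \ref{conservationeuler}, which presupposes $u\in L^3((0,T);W^{1/2+,3}(\mathbb{T}^3))$. I would therefore check that this regularity is a consequence of the stated hypotheses, through the appropriate Sobolev embedding $W^{\beta,2p}\hookrightarrow W^{1/2+,3}$ in space combined with the integrability exponents (as happens automatically for the Besov scale $B^{2/3+}_{3,\infty}$ in Proposition \ref{conservationbesov}); absent a clean embedding one retains at least the local helicity density statement furnished by Proposition \ref{sufficientconditioneuler}. This interplay between the spatial regularity used in the defect estimate and that demanded by the global conservation theorem is the only genuine obstacle, the difference-quotient and Hölder estimates themselves being direct.
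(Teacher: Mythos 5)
Your argument is correct and is essentially the paper's own proof: the paper verifies Proposition \ref{sufficientconditioneuler} in exactly this way, writing the duality brackets as genuine integrals (since $\omega \in W^{\alpha,q}$ with $\alpha>0$), applying H\"older with $\frac{1}{2p}+\frac{1}{2p}+\frac{1}{q}=1$ together with the increment bounds $\lVert\delta u\rVert_{L^{2p}}\lesssim\lvert\xi\rvert^{\beta}\lVert u\rVert_{W^{\beta,2p}}$ and $\lVert\delta\omega\rVert_{L^{q}}\lesssim\lvert\xi\rvert^{\alpha}\lVert\omega\rVert_{W^{\alpha,q}}$, taking $\sigma(\lvert\xi\rvert)=\lvert\xi\rvert^{2\beta+\alpha-1}$ and $C(t)=\lVert u(t)\rVert_{W^{\beta,2p}}^{2}\lVert\omega(t)\rVert_{W^{\alpha,q}}\in L^{1}(0,T)$ by temporal H\"older, and then citing Theorem \ref{conservationeuler}.

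The one place where you go beyond the paper is the compatibility check in your final paragraph, and it is worth settling. Your concern is legitimate: Theorem \ref{conservationeuler} (and already Proposition \ref{defectidentitylemma}, on which Proposition \ref{sufficientconditioneuler} rests) presupposes $u\in L^{3}((0,T);W^{1/2+,3}(\mathbb{T}^{3}))$, and the paper invokes these results without comment. However, the fix you propose, a single embedding $W^{\beta,2p}(\mathbb{T}^{3})\hookrightarrow W^{1/2+,3}(\mathbb{T}^{3})$, cannot work in general: an embedding can never increase differentiability, so it would force $\beta>\frac12$, whereas the hypotheses allow, say, $\beta=\frac{2}{5}$, $\alpha=\frac{3}{10}$, $p=q=2$. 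The gap closes once the vorticity hypothesis is used as well. Biot--Savart gives $u\in L^{k}((0,T);W^{1+\alpha,q}(\mathbb{T}^{3}))$, and the interpolation inequality
\begin{equation*}
\lVert u\rVert_{W^{s,3}}\lesssim\lVert u\rVert_{W^{\beta,2p}}^{2/3}\,\lVert u\rVert_{W^{1+\alpha,q}}^{1/3},\qquad s=\frac{2\beta+\alpha+1}{3},
\end{equation*}
is available precisely because $\frac{2}{3}\cdot\frac{1}{2p}+\frac{1}{3}\cdot\frac{1}{q}=\frac{1}{3}\big(\frac{1}{p}+\frac{1}{q}\big)=\frac{1}{3}$; cubing it and applying H\"older in time with $\frac{1}{r}+\frac{1}{k}=1$ yields $u\in L^{3}((0,T);W^{s,3}(\mathbb{T}^{3}))$ with $s=\frac{2\beta+\alpha+1}{3}>\frac{2}{3}>\frac{1}{2}$ (up to an arbitrarily small loss of regularity from interpolating Slobodeckij scales, and with Besov substitutes at the endpoints $q\in\{1,\infty\}$, both harmless since the inequality $s>\frac{1}{2}$ is strict). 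So the conjugacy conditions $\frac1p+\frac1q=\frac1r+\frac1k=1$ are exactly what makes the stated hypotheses land in the space that Theorem \ref{conservationeuler} requires, and the global, not merely local, conclusion stands. A last, very minor, point: the $L^{2p}$ and $L^q$ increment bounds you attribute to Lemma \ref{diffquotientlemma} are really the classical Sobolev--Slobodeckij difference estimate (inequality (7) of the Constantin--E--Titi paper cited in the appendix); Lemma \ref{diffquotientlemma} as stated only controls $B^{0}_{p,\infty}$ norms, which do not dominate $L^{p}$ norms.
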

\begin{proof}
We will verify the sufficient condition from Proposition \ref{sufficientconditioneuler}. We can estimate that
\begin{align*}
\lvert \langle \delta u (\delta \omega \cdot \delta u), \varphi \rangle_{W^{-1/2,1} \times W^{1/2,\infty}} \rvert &= \bigg\lvert \int_{\mathbb{T}^3} \delta u (\delta \omega \cdot \delta u) \varphi dx \bigg\rvert \leq \lVert \varphi \rVert_{L^\infty} \lVert \delta u \rVert_{L^p}^2 \lVert \delta \omega \rVert_{L^q}  \\
&\leq \lvert \xi \rvert^{2 \beta + \alpha } \lVert \varphi \rVert_{L^\infty} \lVert \delta u \rVert_{W^{\beta,2 p}}^2 \lVert \delta \omega \rVert_{W^{\alpha,q}}.
\end{align*}
We observe that $\lVert \varphi \rVert_{L^\infty} \lVert \delta u \rVert_{W^{\beta,2 p}}^2 \lVert \delta \omega \rVert_{W^{\alpha,q}} \in L^1 (0,T)$ and we can take $\sigma (\lvert \xi \rvert) \coloneqq \lvert \xi \rvert^{2 \beta + \alpha - 1}$. The term $\lvert \langle \delta \omega \lvert \delta u \rvert^2 , \varphi \rangle_{W^{-1/2,1} \times W^{1/2,\infty}} \rvert$ can be estimated similarly. Therefore by Proposition \ref{sufficientconditioneuler} we know that $D_1 (u, \omega) = D_2 (u, \omega) = 0$. Then by Theorem \ref{conservationeuler} we conclude that the (global) helicity is conserved.
\end{proof}
The results in \cite{wangcompressible,wanghelicity} can be derived in a similar fashion, by using suitable interpolation inequalities.

\section{Conservation of helicity in the vanishing viscosity limit} \label{inviscidlimitsection}
In this section we consider the problem of helicity conservation in the vanishing viscosity limit of the Navier-Stokes equations, which are given by
\begin{equation}
\partial_t u - \nu \Delta u + \nabla \cdot (u \otimes u) + \nabla p = 0, \quad \nabla \cdot u = 0,
\end{equation}
where $\nu > 0$ is the viscosity parameter. 

It is still unclear whether helicity is conserved in the inviscid limit as $\nu \rightarrow 0$. The results of several experimental and numerical studies have suggested either conservation or non-conservation of helicity in the vanishing viscosity limit as $\nu \rightarrow 0$, see for example \cite{yaoreview,meng} (and references therein). In this work, we approach this problem from the analytical point of view, as we will provide a sufficient condition for helicity conservation in the inviscid limit, as $\nu \rightarrow 0$. 

The helicity is not a (formally) conserved quantity of solutions of the Navier-Stokes equations. Formally, for sufficiently smooth solutions of the Navier-Stokes equations the derivative of the helicity satisfies
\begin{equation}
\frac{d}{dt} \mathcal{H} (t) = - 2 \nu \int_{\mathbb{T}^3} \bigg( \nabla u : \nabla \omega \bigg) dx. 
\end{equation}
For the integral on the right-hand side to be well-defined (after integrating in time), we would at least have to require that $u \in L^2 ((0,T); H^{3/2} (\mathbb{T}^3))$. We recall that $L^2 ((0,T); \dot{H}^{3/2} (\mathbb{T}^3))$ is a critical norm in the standard scaling of the Navier-Stokes equations in $\mathbb{R}^3$, which is
\begin{equation}
u_\lambda (x,t) = \lambda^{-1} u (\lambda^{-1} x, \lambda^{-2} t). 
\end{equation}
In fact, by a straightforward adaption of the proof of the Prodi-Serrin criteria (see for example \cite{bedrossianbook}), one can show that any Leray-Hopf weak solution of the Navier-Stokes equations for which the $L^2 ((0,T); H^{3/2} (\mathbb{T}^3))$ norm is finite, is in fact a strong solution. If one wishes to formulate an evolution equation for the (local) helicity density of a solution of the Navier-Stokes equations, it would take the form
\begin{equation} \label{viscoushelicitybalance}
\int_0^T \bigg\langle 2 \partial_t \big( \omega u \big) + 4 \nu \nabla u : \nabla \omega + 2 \nabla \cdot (p \omega) + \nabla \cdot \big[ 2 (u \cdot \omega) u - \lvert u \rvert^2 \omega \big] + D_1 (u,\omega) - \frac{1}{2} D_2 (u, \omega), \varphi \bigg\rangle dt = 0.
\end{equation}
We conclude therefore that equation \eqref{viscoushelicitybalance} can only be rigorously justified for strong solutions of the Navier-Stokes equations, as the required regularity to bound the terms coming from the viscous dissipation implies uniqueness among the class of Leray-Hopf weak solutions of the Navier-Stokes equations, by means of the weak-strong uniqueness result originally proved in \cite{serrin}. 

As formal manipulations are justified for strong solutions, we do not need to prove the equation of local helicity balance and in fact the defect terms $D_1 (u, \omega) = D_2 (u, \omega) = 0$ for strong solutions, which follows from Proposition \ref{sufficientconditioneuler}. As a result the following equation (pointwise in space and time) holds for strong solutions of the Navier-Stokes equations on their interval of existence
\begin{equation}
\partial_t \big( \omega u \big) + 2 \nu \nabla u : \nabla \omega + \nabla \cdot (p \omega) + \frac{1}{2} \bigg( \nabla \cdot \big[ 2 (u \cdot \omega) u - \lvert u \rvert^2 \omega \big] \bigg) = 0.
\end{equation}
We now seek to establish sufficient conditions for helicity to be conserved in the inviscid limit of strong solutions of the Navier-Stokes equations. We prove the following theorem, which is related to the results in \cite{drivaseyink}.
\begin{theorem}
Let $\{ u^\nu \}$ be a sequence of strong solutions of the Navier-Stokes equations with corresponding viscosities $\nu$ on a time interval $[0,T]$, such that the sequence is uniformly bounded (in $\nu$, as $\nu \rightarrow 0$) in the space $L^3 ((0,T);B^{\alpha}_{3,\infty} (\mathbb{T}^3)) \cap L^\infty ((0,T); H^{\beta} (\mathbb{T}^3))$, for $\alpha > \frac{2}{3}$ and $\beta > \frac{1}{2}$. Then for any strong limit $u$ of a subsequence $\{ u^{\nu_k} \}$ converging in $L^3 (\mathbb{T}^3 \times (0,T))$, we have that $u^{\nu_k} \rightarrow u$ in $L^3 ((0,T);B^{\alpha-\theta}_{3,\infty} (\mathbb{T}^3)) \cap L^\infty ((0,T); H^{\beta - \theta} (\mathbb{T}^3))$ for any $\theta > 0$ as $\nu_k \rightarrow 0$. Moreover, any such limit $u$ is a functional vorticity solution of the Euler equations (as introduced in Definition \ref{functionalvorticity}) which conserves the (global) helicity. Finally, there exists at least one such strong limit $u$.
\end{theorem}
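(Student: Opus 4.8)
The plan is to split the argument into three parts: (i) extracting a strongly convergent subsequence by a compactness argument, (ii) upgrading the mode of convergence by interpolating the strong $L^3$ convergence against the uniform high-regularity bounds, and (iii) passing to the limit to identify $u$ as a functional vorticity solution and then invoking the helicity-conservation criterion already established.

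For the existence of a limit (the last assertion), I would first record the uniform bounds: the family $\{u^\nu\}$ is bounded in $L^3((0,T);B^\alpha_{3,\infty}(\mathbb{T}^3))\cap L^\infty((0,T);H^\beta(\mathbb{T}^3))$ by hypothesis. To obtain temporal compactness I would bound $\partial_t u^\nu$ from the equation $\partial_t u^\nu=\nu\Delta u^\nu-\nabla\cdot(u^\nu\otimes u^\nu)-\nabla p^\nu$. The viscous term is bounded in $L^\infty((0,T);H^{\beta-2}(\mathbb{T}^3))$ uniformly (and in fact vanishes as $\nu\to0$); for the nonlinear and pressure terms I would use the paraproduct estimates of Lemma \ref{paraproductlemma} exactly as in the Euler section to place $u^\nu\otimes u^\nu$, and hence $p^\nu$, uniformly in $L^{3/2}((0,T);W^{\alpha,3/2}(\mathbb{T}^3))$, so that $\partial_t u^\nu$ is uniformly bounded in $L^{3/2}((0,T);Y)$ for a fixed space $Y$ of negative regularity. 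Since $H^\beta\hookrightarrow\hookrightarrow H^{\beta-\theta}\hookrightarrow Y$, the Aubin--Lions--Simon lemma then yields relative compactness of $\{u^\nu\}$ in $C([0,T];H^{\beta-\theta}(\mathbb{T}^3))$ for every $\theta>0$. Choosing $\theta<\beta-\tfrac12$ and using the critical Sobolev embedding $H^{\beta-\theta}\hookrightarrow L^3$ produces a subsequence converging in $C([0,T];L^3)\subset L^3(\mathbb{T}^3\times(0,T))$, which establishes the existence of a strong limit.

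For the improved convergence (the first assertion), set $v_k=u^{\nu_k}-u$. The Besov statement follows by real interpolation at each fixed time, $\|v_k\|_{B^{\alpha-\theta}_{3,\infty}}\lesssim\|v_k\|_{L^3}^{\theta/\alpha}\|v_k\|_{B^\alpha_{3,\infty}}^{1-\theta/\alpha}$, followed by H\"older in time with exponents $\alpha/\theta$ and $\alpha/(\alpha-\theta)$, which gives $\|v_k\|_{L^3(B^{\alpha-\theta}_{3,\infty})}^3\lesssim\|v_k\|_{L^3(L^3)}^{3\theta/\alpha}\|v_k\|_{L^3(B^\alpha_{3,\infty})}^{3-3\theta/\alpha}\to0$, since the first factor vanishes by hypothesis and the second is uniformly bounded. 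The $L^\infty((0,T);H^{\beta-\theta})$ convergence I would \emph{not} attempt to extract from the $L^3$ convergence by interpolation --- an $L^3$-in-time quantity gives no pointwise-in-time control --- but rather read off directly from the $C([0,T];H^{\beta-\theta})$ compactness of the previous step: every subsequential $C$-limit must agree with $u$ (being also an $L^3$ limit), so the whole subsequence $\{u^{\nu_k}\}$ converges to $u$ in $C([0,T];H^{\beta-\theta})\subset L^\infty((0,T);H^{\beta-\theta})$.

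It remains to identify $u$. Passing to the limit in the standard weak formulation \eqref{weakformulationeuler} for $u^{\nu_k}$, the dissipative term $\nu_k\Delta u^{\nu_k}$ vanishes because $\nu_k\to0$ while $u^{\nu_k}$ stays bounded in $H^\beta$, and the nonlinearity converges because $u^{\nu_k}\otimes u^{\nu_k}\to u\otimes u$ in the relevant space by the strong $L^3(B^{\alpha-\theta}_{3,\infty})$ convergence together with the bilinear paraproduct estimate of Lemma \ref{paraproductlemma}. Hence $u$ is a weak solution of the Euler equations with $u\in L^\infty((0,T);H^{1/2+})$, so by Proposition \ref{funcvorticityprop} it is a functional vorticity solution. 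Finally, weak-$*$ lower semicontinuity transfers the uniform bound to the limit, giving $u\in L^3((0,T);B^\alpha_{3,\infty})$ with $\alpha>\tfrac23$, i.e. $u\in L^3((0,T);B^{2/3+}_{3,\infty})$; conservation of the global helicity then follows immediately from Proposition \ref{conservationbesov}. I expect the main obstacle to be the compactness step --- in particular producing the uniform negative-regularity bound on $\partial_t u^\nu$ through the paraproduct calculus and verifying the hypotheses of Aubin--Lions--Simon in the Besov/Sobolev scale --- together with the fact that the $L^\infty(H^{\beta-\theta})$ convergence genuinely requires the $C([0,T];H^{\beta-\theta})$ compactness rather than interpolation alone.
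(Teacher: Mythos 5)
Your proposal is correct, and its skeleton coincides with the paper's: uniform bounds plus a time-derivative estimate and Aubin--Lions give a strongly convergent subsequence; strong $L^3$ convergence identifies any limit as a weak Euler solution; Proposition \ref{funcvorticityprop} upgrades it to a functional vorticity solution; and Proposition \ref{conservationbesov} (applicable since one may take $\theta$ with $\alpha-\theta>\frac{2}{3}$) yields helicity conservation. The one place you genuinely diverge is the helicity step: the paper's primary argument passes to the limit in the \emph{viscous} local helicity balance, showing the left-hand side converges in $W^{-1,1}((0,T);W^{-2,1}(\mathbb{T}^3))$ and thereby identifying the defect terms $D_1-\frac{1}{2}D_2$ of the limit as the distributional limit of the dissipation $4\nu\nabla u^\nu:\nabla\omega^\nu$ --- a ``dissipation anomaly'' interpretation --- before invoking Proposition \ref{conservationbesov} to show they vanish. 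You skip this and argue directly via Propositions \ref{funcvorticityprop} and \ref{conservationbesov}; the paper itself remarks that this shortcut is legitimate, so nothing is lost logically, only the extra information about the origin of the defect measure as a vanishing-viscosity limit. Conversely, your write-up is more explicit precisely where the paper is terse: you derive the uniform bound on $\partial_t u^\nu$ from the equation via the paraproduct estimates and invoke the Aubin--Lions--Simon lemma to obtain compactness in $C([0,T];H^{\beta-\theta}(\mathbb{T}^3))$, where the paper simply cites \cite{drivaseyink}; and you replace the paper's one-line ``arguing by contradiction'' for convergence of the full subsequence by the correct mechanism, namely the interpolation bound
\begin{equation*}
\lVert v_k\rVert_{B^{\alpha-\theta}_{3,\infty}} \lesssim \lVert v_k\rVert_{L^3}^{\theta/\alpha}\,\lVert v_k\rVert_{B^{\alpha}_{3,\infty}}^{1-\theta/\alpha}
\end{equation*}
followed by H\"older in time for the Besov part, and compactness plus uniqueness of limits for the $L^\infty((0,T);H^{\beta-\theta}(\mathbb{T}^3))$ part, where you rightly note that interpolation of an $L^3$-in-time quantity cannot by itself produce pointwise-in-time control. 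Both of these fill-ins are sound and consistent with what the paper intends.
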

\begin{proof}
First we prove that there exists at least one subsequence $\{ u^{\nu_k} \}$ converging strongly in $L^3 (\mathbb{T}^3 \times (0,T))$ to a limit $u \in \cap_{\theta > 0} L^3 ((0,T); B^{\alpha-\theta}_{3,\infty} (\mathbb{T}^3)) \cap L^\infty ((0,T); H^{\beta - \theta} (\mathbb{T}^3))$, for all $\theta > 0$. Similarly to \cite{drivaseyink}, one can show that $\frac{d u^{\nu_k}}{d t} \in L^3 ((0,T);B^{-6/3}_{3,\infty} (\mathbb{T}^3))$ (uniformly in $\nu_k$). Then we apply the Aubin-Lions lemma (see for example \cite{robinsonbook,boyer}) in order to obtain the strong compactness of a subsequence $\{ u^{\nu_k} \}$ in $L^3 ((0,T);B^{\alpha - \theta}_{3,\infty} (\mathbb{T}^3)) \cap L^\infty ((0,T); H^{\beta - \theta} (\mathbb{T}^3))$ for all $\theta > 0$ and in particular in $L^3 (\mathbb{T}^3 \times (0,T))$. 

In addition, we observe that for any strong limit $u \in L^3 (\mathbb{T}^3 \times (0,T)) \cap L^\infty ((0,T); L^2 (\mathbb{T}^3))$ of any subsequence $\{ u^{\nu_k} \}$ converging strongly in $L^3 (\mathbb{T}^3 \times (0,T))$, it holds that $u^{\nu_k} \rightarrow u$ in $\cap_{\theta > 0} L^3 ((0,T); B^{\alpha-\theta}_{3,\infty} (\mathbb{T}^3)) \cap L^\infty ((0,T); H^{\beta - \theta} (\mathbb{T}^3))$ and the strong convergence holds for any $\theta > 0$, which can be seen by arguing by contradiction. In addition, by using the boundedness of $\{ u^{\nu_k} \}$ in $L^\infty ((0,T); H^{\beta} (\mathbb{T}^3))$, we can extract a subsequence from the earlier subsequence, which converges weak-$*$ to $u \in L^\infty ((0,T); H^{\beta} (\mathbb{T}^3))$.

The strong convergence in $L^3 (\mathbb{T}^3 \times (0,T))$ is sufficient for the limit $u$ to be a weak solution of the Euler equations in the standard sense, such that it satisfies equation \eqref{weakformulationeuler} (cf. \cite{duchon,drivaseyink}). Next, we show that the limit solution $u$ obeys the local helicity balance \eqref{localhelicitybalance}. The sequence of viscous solutions $\{ u^\nu \}$ of the Navier-Stokes equations satisfies the following equation 
\begin{align*}
&\int_0^T \bigg[ - \bigg\langle 2 \omega^\nu u^\nu, \partial_t \varphi \bigg\rangle + \bigg\langle  2 \nabla \cdot (p^\nu \omega^\nu) + \nabla \cdot \big[ 2 (u^\nu \cdot \omega^\nu) u^\nu - \lvert u^\nu \rvert^2 \omega^\nu \big], \varphi \bigg\rangle \bigg] dt  \\
&= - \int_0^T \bigg\langle 4 \nu \nabla u^\nu : \nabla \omega^\nu ,  \varphi \bigg\rangle dt. 
\end{align*}
We observe that the left-hand side is uniformly bounded in $\nu$ in the space $W^{-1,1} ((0,T); W^{-2,1} (\mathbb{T}^3))$. Therefore, we know that the distributional limit $\lim_{\nu \rightarrow 0} \big( 4 \nu \nabla u^\nu : \nabla \omega^\nu \big)$ exists. We will call this limit $D_1 (u,\omega) - \frac{1}{2} D_2 (u, \omega)$. Moreover, due to the strong convergence of (a subsequence of) $\{ u^\nu \}$ in $L^3 ((0,T);B^{\alpha - \theta}_{3,\infty} (\mathbb{T}^3))$, we conclude that (by using Lemma \ref{paraproductlemma})
\begin{align*}
&\int_0^T \bigg[ - \bigg\langle 2 \omega^\nu u^\nu, \partial_t \varphi \bigg\rangle + \bigg\langle  2 \nabla \cdot (p^\nu \omega^\nu) + \nabla \cdot \big[ 2 (u^\nu \cdot \omega^\nu) u^\nu - \lvert u^\nu \rvert^2 \omega^\nu \big], \varphi \bigg\rangle \bigg] dt \\
&\xrightarrow[]{\nu \rightarrow 0} \int_0^T \bigg[ - \bigg\langle 2 \omega u, \partial_t \varphi \bigg\rangle + \bigg\langle  2 \nabla \cdot (p \omega) + \nabla \cdot \big[ 2 (u \cdot \omega) u - \lvert u \rvert^2 \omega \big], \varphi \bigg\rangle \bigg] dt.
\end{align*}
Therefore, we conclude that $u$ satisfies the local helicity balance \eqref{localhelicitybalance}. This conclusion could also have been obtained directly by observing that since $u$ is a weak solution of the Euler equations, by Proposition \ref{funcvorticityprop} it follows that $u$ is a functional vorticity solution of the Euler equations because $u \in L^3 ((0,T); B^{1/2+}_{3,\infty}) \cap L^\infty ((0,T); H^{1/2+} (\mathbb{T}^3))$. 

By Theorem \ref{helicitybalance} we then infer that $u$ obeys the equation of local helicity balance \eqref{localhelicitybalance}. However, the first approach taken is more informative, as we find that the resulting defect terms $D_1 (u,\omega) - \frac{1}{2} D_2 (u, \omega)$ arise as a distributional limit $\lim_{\nu \rightarrow 0} \big( 4 \nu \nabla u^\nu : \nabla \omega^\nu \big)$. If the limit is nonzero (which could potentially happen if $\frac{1}{2} < \alpha \leq \frac{2}{3}$ in the assumptions of the theorem, instead of $\alpha > \frac{2}{3}$), it can be regarded as some sort of `dissipation anomaly'. 

Then by Proposition \ref{conservationbesov} we conclude that the limiting solution $u$ has zero defect terms (i.e. $D_1 (u, \omega) = D_2 (u, \omega) = 0$), as one can choose $\theta$ sufficiently small such that $\alpha - \theta > \frac{2}{3}$. 

Finally, by proceeding like in the proof of Theorem \ref{conservationeuler}, we find that the solution $u$ has constant helicity. Note that we need that $u \in L^\infty ((0,T); H^{1/2} (\mathbb{T}^3))$, as otherwise we cannot make sense of the helicity pointwise. 
\end{proof}

\section{On triple correlation structure functions in helical turbulence} \label{triplecorrsection}
It is well known that in the Kolmogorov theory of turbulence, one can obtain statistical scaling laws for turbulent flows by using structure functions. In particular, the Kolmogorov-$\frac{4}{5}$ law provides an exact expression for the third-order longitudinal structure function in terms of the increment length and the energy dissipation rate \cite{frisch}. Similar results exist in helical turbulence, see for example \cite{alexakis} and references therein. 

Before we explicitly state several scaling laws for structure functions that are known in helical turbulence, we introduce some notation. We will use the subscripts $L$ and $P$ to denote the longitudinal and transversal parts of a difference increment, i.e. for a general vector field $\psi$ we define
\begin{equation}
\delta \psi_L (\xi;x,t) \coloneqq \big( \delta \psi (\xi;x,t) \cdot \widehat{\xi} \; \big) \xi, \quad \delta \psi_{P} (\xi;x,t) \coloneqq \delta \psi(\xi;x,t) - \delta \psi_L (\xi; x,t),
\end{equation}
where $\widehat{\xi}$ denotes the normalised unit vector in the direction of $\xi \in \mathbb{R}^3 \backslash \{ 0 \}$. 

In \cite{gomez} the following scaling law was derived, which will be our main concern in this section
\begin{equation} \label{scalinglaw}
\big\langle \delta u_L (\xi;\cdot,\cdot) \big( \delta u (\xi;\cdot,\cdot) \cdot \delta \omega (\xi;\cdot,\cdot) \big) \rangle - \frac{1}{2} \langle \delta \omega_L (\xi;\cdot,\cdot) \lvert \delta u(\xi;\cdot,\cdot) \rvert^2 \rangle = - \frac{4}{3} \tilde{\epsilon} \xi,
\end{equation}
where $\langle \cdot \rangle$ denotes the ensemble average (to which we will attach rigorous meaning later), while $\tilde{\epsilon} = \nu \big\langle \nabla u : \nabla \omega \rangle$ denotes the average rate of change of the (global) helicity. 

In this section, we combine the methods of \cite{duchon,eyinklocal} (see also \cite{nie,novackscaling}) together with the interpretation of the local helicity balance at low regularity by using paradifferential calculus, which was introduced in Section \ref{eulersection} of this paper. This will allow us to prove a version of equation \eqref{scalinglaw} for functional vorticity solutions.

There is an additional scaling law in helical turbulence that should be mentioned here, which is an analogue of the Kolmogorov $\frac{4}{5}$-law \cite{lvov,kurien,kurienreflection,chkhetiani} (see also \cite{ye})
\begin{equation}
\big\langle \delta u_L (\xi;\cdot,\cdot) \big[ u_P (\cdot+\xi,\cdot) \times u_P (\cdot,t) \big] \big\rangle = \frac{4}{15} \tilde{\epsilon} \lvert \xi \rvert^2,
\end{equation}
where as before, $\langle \cdot \rangle$ denotes the ensemble average (to which we will attach rigorous meaning later).
As was mentioned before, in this work we will focus on proving a version of relation \eqref{scalinglaw} for functional vorticity solutions of the Euler equations. We first introduce the following objects (where $r > 0$, $\varphi  \in \mathcal{D} (\mathbb{T}^3 \times (0,T); \mathbb{R}^3)$ and $d S (\xi)$ denotes the unit Haar measure on the two-dimensional sphere $\mathbb{S}^2$)
\begin{align}
\mathcal{S}_1 (u, \omega, \varphi, r) &\coloneqq \int_{\mathbb{S}^2} d S (\xi) \big\langle \big( \delta u (r \xi;\cdot,t) \cdot \xi \big) \big( \delta \omega (r \xi;\cdot,t) \cdot \delta u (r \xi;\cdot,t) \big), \varphi \big\rangle, \label{sphericalaverage1} \\
\mathcal{S}_2 (u, \omega, \varphi, r) &\coloneqq \int_{\mathbb{S}^2} d S (\xi) \big\langle \big( \delta \omega (r \xi;\cdot,t) \cdot \xi \big) \lvert \delta u (r \xi;\cdot,t) \rvert^2, \varphi \big\rangle. \label{sphericalaverage2}
\end{align} 
We would first like to make a comment on how the objects in equations \eqref{sphericalaverage1} and \eqref{sphericalaverage2} are defined. Similarly to Remark \ref{defectidentityremark}, one can rewrite the duality brackets as follows
\begin{align}
\mathcal{S}_1 (u, \omega, \varphi, r) &= \int_{\mathbb{S}^2} d S (\xi) \bigg\langle \omega, (\delta u (-r \xi ; \cdot , t) \cdot \varphi (\cdot - r \xi, t)) \delta u (- r \xi ; \cdot , t) \nonumber \\
&- (\delta u (r \xi ; \cdot, t) \cdot \varphi (\cdot, t)) \delta u (r \xi ; \cdot, t) \bigg\rangle, \label{dualsphericalaverage1} \\
\mathcal{S}_2 (u, \omega, \varphi, r) &= \int_{\mathbb{S}^2} d S (\xi) \bigg\langle \omega, \lvert \delta u \rvert^2 (-r \xi ; \cdot, t) \varphi (\cdot - r \xi , t) - \lvert \delta u \rvert^2 (r \xi ; \cdot, t) \varphi (\cdot, t) \bigg\rangle. \label{dualsphericalaverage2}
\end{align} 
As was noted in Remark \ref{defectidentityremark}, the integrands are bounded and continuous in $\xi$, and hence are measurable. Therefore the integrals over $\mathbb{S}^2$ are well-defined. 

There is also a different way to see that the spherical averages are well-defined. Let us introduce the following spherical averaging operator
\begin{equation}
A_r f (x,t) \coloneqq \int_{\mathbb{S}^2} f (x - r \xi, t ) d S (\xi), \quad x \in \mathbb{T}^3.
\end{equation}
We observe that for $f \in L^1 (\mathbb{T}^3)$ (where we ignore the time dependency for the moment), the object $A_r f$ is a measurable function which is finite for almost every $x \in \mathbb{T}^3$ and in fact $A_r f \in L^1 (\mathbb{T}^3)$. Moreover, the spherical averaging operator can be viewed as a Fourier multiplier, see \cite[~Section 6.5]{grafakos} for further details. This observation follows from rewriting $A_r f$ as a convolution of $f$ with the indicator function $1_{\mathbb{S}^2}$ on $\mathbb{R}^3$.

In the case of the whole space $\mathbb{R}^3$, the Fourier multiplier (i.e. the Fourier transform of $1_{\mathbb{S}^2}$ on $\mathbb{R}^3$) can be computed explicitly in terms of Bessel functions, see \cite[~Appendix B.4]{grafakos} for details. Then from this explicit representation one can deduce that the Fourier multiplier is $C^\infty$.
The fact that the spherical average is a Fourier multiplier allows us to make the following observation on the Littlewood-Paley blocks (see for example \cite{boutrosnonuniqueness,bahouri} for an introduction to Littlewood-Paley theory)
\begin{equation} \label{convolutionequality}
\Delta_j (A_r f) = A_r (\Delta_j f).
\end{equation}
We note that $A_r f$ can be understood as a distributional convolution, and one can check that the equality holds at the level of the Fourier coefficients.

We note that the operator $A_r$ is a bounded operator from $L^p (\mathbb{T}^3)$ to $L^p (\mathbb{T}^3)$ (for $1 \leq p \leq \infty$) with fixed $r > 0$, by using the Minkowski inequality. This implies the following inequality
\begin{equation} \label{Lpbound}
\lVert A_r \Delta_j f \rVert_{L^p} \leq C_r \lVert \Delta_j f \rVert_{L^p}.
\end{equation}
By using equations \eqref{convolutionequality} and \eqref{Lpbound} we then conclude that $A_r f$ has the same Besov regularity as $f$ (by using the definition of the Besov norms in terms of Littlewood-Paley blocks), so that
\begin{equation*}
\lVert A_r f \rVert_{B^s_{p,q}} \leq C_r \lVert f \rVert_{B^s_{p,q}},
\end{equation*}
for any $s \in \mathbb{R}$ and $1 \leq p, q \leq \infty$. Therefore we conclude that for $u \in L^3 ((0,T); W^{1/2+,3} (\mathbb{T}^3))$, we have the following regularity results
\begin{align}
&\left\lVert \int_{\mathbb{S}^2} d S (\xi) \big( \delta u (r \xi;x,t) \cdot \xi \big) \big( \delta \omega (r \xi;x,t) \cdot \delta u (r \xi;x,t) \big) \right\rVert_{L^1 (B^{-1/2}_{1,3} )} \leq C_r, \label{averageinclusion1} \\
&\bigg\lVert \int_{\mathbb{S}^2} d S (\xi)  \big( \delta \omega (r \xi;x,t) \cdot \xi \big) \lvert \delta u (r \xi;x,t) \rvert^2 \bigg\rVert_{L^1 (B^{-1/2}_{1,3} )} \leq C_r, \label{averageinclusion2}
\end{align}
and we emphasise that these Besov norms depend on $r > 0$. Similarly, we have that
\begin{align}
&\bigg\lVert \int_{\mathbb{S}^2} d S (\xi) \bigg[ (\delta u (-r \xi ; x , t) \cdot \varphi (x - r \xi, t)) \delta u (- r \xi ; x , t) - (\delta u (r \xi ; x, t) \cdot \varphi (x, t)) \delta u (r \xi ; x, t) \bigg] \bigg\rVert_{L^{3/2} (B^{1/2+}_{3/2,3} )} \nonumber \\
&\leq C_r, \label{dualinclusion1} \\
&\bigg\lVert \int_{\mathbb{S}^2} d S (\xi) \bigg[ \lvert \delta u \rvert^2 (-r \xi ; x, t) \varphi (x - r \xi , t) - \lvert \delta u \rvert^2 (r \xi ; x, t) \varphi (x, t) \bigg] \bigg\rVert_{L^{3/2} (B^{1/2+}_{3/2,3} )} \leq C_r. \label{dualinclusion2}
\end{align} 
Therefore, by an adaption of the proof of Proposition \ref{defectidentitylemma}, one can move the integral over $\xi$ inside the duality bracket in equations \eqref{sphericalaverage1}-\eqref{dualsphericalaverage2}. One first introduces a mollification of $\omega$, then applies the Fubini-Tonelli theorem and afterward uses the bounds on the operator $A_r$ that we have stated above. Therefore we will introduce the following notation
\begin{align}
S_1 (u, \omega, r) (x,t) &\coloneqq \int_{\mathbb{S}^2} d S (\xi) \big( \delta u (r \xi;x,t) \cdot \xi \big) \big( \delta \omega (r \xi;x,t) \cdot \delta u (r \xi;x,t) \big), \\
S_2 (u, \omega, r) (x,t) &\coloneqq \int_{\mathbb{S}^2} d S (\xi) \big( \delta \omega (r \xi;x,t) \cdot \xi \big) \lvert \delta u (r \xi;x,t) \rvert^2,
\end{align} 
where by equations \eqref{averageinclusion1} and \eqref{averageinclusion2} it follows that $S_1 (u, \omega, r)$ and $S_2 (u, \omega, r)$ belong to $L^1 ((0,T); B^{-1/2}_{1,3} (\mathbb{T}^3))$ (where the bounds of their norms depend on $r > 0$).

If one wishes to obtain bounds on the Besov norm of the spherical average which are uniform in $r$, then one has to use estimates on the spherical maximal function, which is defined by (due to the periodicity we can restrict the range of $r$)
\begin{equation}
M_S f \coloneqq \sup_{0 < r \leq 1} A_r f. 
\end{equation}
The $L^p (\mathbb{R}^d)$ to $L^p (\mathbb{R}^d)$ boundedness of this maximal function operator was originally proven in \cite{stein}, if $\frac{d}{d-1} < p \leq \infty$ and $d \geq 3$. The boundedness of the spherical maximal operator can be extended to the torus by a straightforward adaption of the proof for the case of the whole space $\mathbb{R}^d$. Therefore for $d = 3$ we have that
\begin{equation} \label{uniformLpbound}
\sup_{r > 0} \lVert A_r \Delta_j f \rVert_{L^p} \lesssim \lVert \Delta_j f \rVert_{L^p}, \quad \frac{3}{2} < p \leq \infty.
\end{equation}
If a function belongs to the space $B^{1/2+}_{3/2,3} (\mathbb{T}^3)$, by a Besov embedding (see for example \cite{bahouri}) there exists a suitably small $\gamma > 0$ such that it belongs to the space $B^{1/2+}_{3/2+\gamma,3} (\mathbb{T}^3)$. Therefore by equations \eqref{convolutionequality} and \eqref{uniformLpbound}, the spherical averages in equations \eqref{dualinclusion1} and \eqref{dualinclusion2} belong to the space $L^{3/2} ((0,T); B^{1/2+}_{3/2+\gamma,3} (\mathbb{T}^3))$ uniformly in $r$ (again for some suitably chosen small $\gamma$ which depends on the regularity exponent).

By a Besov embedding, one obtains that $B^{-1/2}_{1,3} (\mathbb{T}^3) \subset B^{-3/2 - 3 \gamma}_{3/2 ( 1 - 3/2 \gamma)^{-1}, 3} (\mathbb{T}^3)$ for a suitably small $\gamma > 0$. Then again by using \eqref{convolutionequality} and \eqref{uniformLpbound}, one finds that the spherical averages in equations \eqref{averageinclusion1} and \eqref{averageinclusion2} belong to the space $L^1 ( (0,T); B^{-3/2 - 3 \gamma}_{3/2 ( 1 - 3/2 \gamma)^{-1}, 3} (\mathbb{T}^3))$, uniformly in $r$. 

Now that the interpretation of the spherical averages for the functional vorticity solutions of the Euler equations has been clarified, we turn to the proof of the scaling law \eqref{scalinglaw}. In the proof we will adapt an argument from \cite{novackscaling}, which is the choice of a particular type of radial mollifier.
\begin{theorem} \label{scalingthm}
Let $(u,\omega)$ be a functional vorticity solution of the Euler equations such that $u \in L^3 ((0,T);B^{1/2+}_{3,\infty} (\mathbb{T}^3))$. Then the following distributional limit converges in $W^{-1,1} ((0,T); \linebreak W^{-2,1} (\mathbb{T}^3))$
\begin{align}
&\lim_{r \rightarrow 0} \bigg[ \frac{S_1 (u,\omega,r)}{r} - \frac{1}{2} \frac{S_2 (u,\omega,r)}{r} \bigg].
\end{align}
Moreover, the following distributional equation holds in $W^{-1,1} ((0,T); W^{-2,1} (\mathbb{T}^3))$ (for any $\varphi \in \mathcal{D} (\mathbb{T}^3 \times (0,T);\mathbb{R})$)
\begin{equation}
-\frac{4}{3} \cdot \frac{1}{4} \bigg\langle D_{1} (u, \omega) - \frac{1}{2} D_{2} (u, \omega) , \varphi \bigg\rangle_{x,t} = \bigg\langle \lim_{r \rightarrow 0} \bigg[ \frac{1}{4 \pi} \frac{S_1 (u,\omega,r)}{r} - \frac{1}{2} \cdot \frac{1}{4 \pi} \frac{S_2 (u,\omega,r)}{r} \bigg] , \varphi \bigg\rangle_{x,t},
\end{equation}
which is the inviscid version of the scaling law given in equation \eqref{scalinglaw}.
\end{theorem}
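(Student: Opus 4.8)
The plan is to exploit the mollifier-independence of the defect distribution $D_1 - \tfrac12 D_2$, established in Theorem \ref{helicitybalance}, in order to replace the generic smooth mollifier $\phi$ by the normalised indicator of the unit ball, $\phi = \tfrac{3}{4\pi}\mathbf{1}_{B_1}$ (for which $\int_{\mathbb{R}^3}\phi\,d\xi = 1$). For a radial mollifier the gradient is $\nabla\phi_\epsilon(\xi) = \epsilon^{-4}\phi'(|\xi|/\epsilon)\,\widehat{\xi}$, so writing $\xi = \rho\theta$ with $\rho = |\xi|$ and $\theta\in\mathbb{S}^2$, and using $d\xi = \rho^2\,dS(\theta)\,d\rho$, the regularised defect terms $D_{1,\epsilon}, D_{2,\epsilon}$ of \eqref{D1epsilon}--\eqref{D2epsilon} collapse onto the spherical structure functions: the angular integral of $(\widehat{\xi}\cdot\delta u)(\delta\omega\cdot\delta u)$ over the sphere of radius $\rho$ is exactly $S_1(u,\omega,\rho)$ (and similarly for $S_2$). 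After the rescaling $\rho = \epsilon s$ one obtains the representation
\begin{equation*}
\Big\langle D_{1,\epsilon}(u,\omega) - \tfrac12 D_{2,\epsilon}(u,\omega),\varphi\Big\rangle = \int_0^\infty \phi'(s)\,s^3\,\Big\langle \frac{S_1(u,\omega,\epsilon s)}{\epsilon s} - \tfrac12\frac{S_2(u,\omega,\epsilon s)}{\epsilon s},\varphi\Big\rangle\,ds,
\end{equation*}
valid for any radial mollifier, which already exhibits the quotients $S_i/r$.

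The decisive simplification comes from the indicator choice. Then $\nabla\phi_\epsilon$ is (minus) $\tfrac{3}{4\pi\epsilon^3}\,\widehat{\xi}$ times the surface measure on the sphere $\{|\xi| = \epsilon\}$, so the volume integrals defining $D_{i,\epsilon}$ reduce to a single surface integral and one finds the exact identity
\begin{equation*}
D_{1,\epsilon}(u,\omega) - \tfrac12 D_{2,\epsilon}(u,\omega) = -\frac{3}{4\pi}\left[\frac{S_1(u,\omega,\epsilon)}{\epsilon} - \tfrac12\frac{S_2(u,\omega,\epsilon)}{\epsilon}\right],
\end{equation*}
with no residual integration in $\rho$. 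Since the left-hand side converges in $W^{-1,1}((0,T);W^{-2,1}(\mathbb{T}^3))$ to $D_1 - \tfrac12 D_2$ by Theorem \ref{helicitybalance}, and since by mollifier-independence this limit is the same value obtained from the smooth mollifier, the bracketed quotient on the right must itself converge; this proves the first assertion of the theorem. Taking the limit and using $\int_0^\infty\phi(s)s^2\,ds = \tfrac{1}{4\pi}$ (equivalently $\int_0^\infty\phi'(s)s^3\,ds = -\tfrac{3}{4\pi}$), which follows from $\int_{\mathbb{R}^3}\phi\,d\xi = 1$, then yields $\lim_{r\to 0}[\tfrac{S_1}{r} - \tfrac12\tfrac{S_2}{r}] = -\tfrac{4\pi}{3}(D_1 - \tfrac12 D_2)$, and dividing by $4\pi$ gives precisely the stated identity, since $-\tfrac43\cdot\tfrac14 = -\tfrac13 = \tfrac{1}{4\pi}\cdot(-\tfrac{4\pi}{3})$. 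This simultaneously fixes existence of the limit and its value, following the Duchon--Robert and Eyink local strategy cited in the section.

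The main obstacle is that the indicator is not an admissible (smooth) mollifier, so the mollifier-independence of Theorem \ref{helicitybalance} does not apply to it verbatim; essentially all the analytic work is in justifying this replacement. I would approximate $\tfrac{3}{4\pi}\mathbf{1}_{B_1}$ by a sequence of smooth radial mollifiers $\phi^{(n)}$, for each of which the general representation above holds with the common limit $D_1 - \tfrac12 D_2$. As $n\to\infty$ the radial weights $\phi^{(n)\prime}(s)\,s^3$ converge to $-\tfrac{3}{4\pi}\delta_{s=1}$ in the sense of distributions, so that the smooth representation tends to $-\tfrac{3}{4\pi}G(\epsilon)$, where $G(r) := \tfrac{S_1(r)}{r} - \tfrac12\tfrac{S_2(r)}{r}$; the difficulty is to interchange the limits $n\to\infty$ and $\epsilon\to 0$. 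This is exactly where the uniform-in-$r$ control from the spherical maximal function estimate \eqref{uniformLpbound}, together with the Littlewood--Paley identity \eqref{convolutionequality}, is essential, as it dominates $S_i(\cdot,r)$ in a fixed negative Besov space independently of $r$ and thereby legitimises a dominated-convergence passage in the double limit. Equivalently, one shows that the defect computed with the mean-zero difference $\phi^{(n)} - \tfrac{3}{4\pi}\mathbf{1}_{B_1}$ vanishes as $n\to\infty$, uniformly in $\epsilon$, by re-running the paraproduct and commutator estimates from the proof of Theorem \ref{helicitybalance} under the hypothesis $u\in L^3((0,T);B^{1/2+}_{3,\infty}(\mathbb{T}^3))$. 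One must also keep track of the interchange of the surface integral over $\mathbb{S}^2$ with the spatial duality bracket, which is handled exactly as in Proposition \ref{defectidentitylemma} and the $A_r$-calculus preceding the theorem.
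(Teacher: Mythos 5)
Your skeleton is the same as the paper's (it is precisely the adaptation of the radial-mollifier trick cited at the start of Section \ref{triplecorrsection}): write the mollified defect terms in spherical coordinates so that the angular integrals produce $S_1$ and $S_2$, pass to the normalised indicator $\tfrac{3}{4\pi}\mathbf{1}_{B_1}$, for which the representation collapses to the exact identity $D_{1,\epsilon}-\tfrac12 D_{2,\epsilon} = -\tfrac{3}{4\pi}\bigl[\tfrac{S_1(u,\omega,\epsilon)}{\epsilon}-\tfrac12\tfrac{S_2(u,\omega,\epsilon)}{\epsilon}\bigr]$, and your constants ($\int_0^\infty\phi'(s)s^3\,ds=-\tfrac{3}{4\pi}$, and $-\tfrac43\cdot\tfrac14 = \tfrac{1}{4\pi}\cdot(-\tfrac{4\pi}{3})$) check out against the paper's equation \eqref{sphericalaverageexpression}. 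The genuine gap is in the step you yourself flag as carrying ``essentially all the analytic work'': your order of limits. You propose, for each smooth $\phi^{(n)}$, to let $\epsilon\to 0$ first (obtaining the common limit $D_1-\tfrac12 D_2$ by mollifier-independence) and then let $n\to\infty$; this Moore--Osgood interchange needs the $n$-limit to be uniform in $\epsilon$, and the mechanism you offer for that uniformity --- domination via the spherical maximal estimate \eqref{uniformLpbound} --- cannot deliver it. The maximal function controls $S_i(\cdot,r)$ uniformly in $r$, but what enters your representation is the quotient $S_i(\cdot,\epsilon s)/(\epsilon s)$. Under the hypothesis $u\in L^3((0,T);B^{1/2+}_{3,\infty}(\mathbb{T}^3))$ the best direct estimate (two velocity increments gaining $\lvert\xi\rvert^{1/2+}$ in total, since the product must be paired against $\delta\omega$ of regularity $-1/2+$, which gains nothing) is $\lVert S_i(\cdot,r)\rVert\lesssim r^{1/2+}$, i.e.\ $S_i(r)/r = O(r^{-1/2+})$, which diverges as $r\to 0$. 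There is no dominating function; likewise, the uniform-in-$\epsilon$ vanishing of the defect computed with $\phi^{(n)}-\tfrac{3}{4\pi}\mathbf{1}_{B_1}$ amounts to a uniform-in-$\epsilon$ modulus of continuity for $r\mapsto\langle S_i(r)/r,\varphi\rangle$ near the unit sphere, which is not accessible by re-running the paraproduct estimates: at this regularity the convergence of $S_i(r)/r$ is not a size statement at all, but a consequence of the PDE.

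The repair is to reverse the order of limits, which is exactly what the paper does with its family $\kappa_{\mu,\epsilon}$ (your $\phi^{(n)}$ with $n\leftrightarrow 1/\mu$). Fix $\epsilon$ and sharpen the mollifier first: on the defect side this requires only continuity of $s\mapsto\langle S_i(u,\omega,\epsilon s)/s,\varphi\rangle$ near $s=1$ at the fixed scale $\epsilon$ (continuity of translations in Besov spaces --- no uniformity in $\epsilon$ is needed), and on the balance side only $\kappa_{\mu,\epsilon}\to\epsilon^{-3}\mathbf{1}_{B_\epsilon(0)}$ in $L^p(\mathbb{R}^3)$ for fixed $\epsilon$. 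This yields your exact identity rigorously at every fixed $\epsilon$, which is the paper's equation \eqref{sphericalaverageexpression}. Then send $\epsilon\to 0$: the convergence is carried entirely by the left-hand (balance) side, which converges in $W^{-1,1}((0,T);W^{-2,1}(\mathbb{T}^3))$ to the left-hand side of \eqref{limithelicitybalance}, because the normalised indicator is still an $L^1$ approximate identity and the estimates of Lemma \ref{paraproductlemma} apply verbatim; this forces the convergence of $\tfrac{S_1(u,\omega,\epsilon)}{\epsilon}-\tfrac12\tfrac{S_2(u,\omega,\epsilon)}{\epsilon}$ and identifies its limit with the defect distribution, with no uniform-in-$\epsilon$ control of the structure functions ever being required.
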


\begin{proof}
Let us first define a new (nonincreasing) $C^\infty$ function $\kappa: \mathbb{R} \rightarrow \mathbb{R}$ by
\begin{equation*}
\kappa (s) = \begin{cases}
1 \quad \text{if } s \leq \frac{3}{4}, \\
0 \quad \text{if } s \geq \frac{5}{4}.
\end{cases}
\end{equation*}
We then define a $C^\infty$ function $\kappa_{\mu,\epsilon}: \mathbb{R}^3 \rightarrow \mathbb{R}$ (for $\mu, \epsilon > 0$)
\begin{equation*}
\widetilde{\kappa}_{\mu,\epsilon} (x) \coloneqq \frac{1}{\epsilon^3 } \kappa \left( 1 + \mu^{-1} \left(\frac{\lvert x \rvert}{\epsilon} -1 \right) \right).
\end{equation*}
Then we define the following smooth mollifier $\kappa_{\mu,\epsilon}$ as follows
\begin{equation*}
\kappa_{\mu,\epsilon} (x) \coloneqq \frac{1}{\lVert \widetilde{\kappa}_{\mu,1} \rVert_{L^1}} \widetilde{\kappa}_{\mu,\epsilon} (x).
\end{equation*}
Similarly to before, we will use the notation $u^{\mu,\epsilon} \coloneqq u * \kappa_{\mu,\epsilon}$. Next we observe that the derivation of equation \eqref{mollifiedbalance} (in the proof of the equation of local helicity balance in Theorem \ref{helicitybalance}) does not rely on any particular choice of mollifier, therefore equation \eqref{mollifiedbalance} also holds with $\phi_\epsilon$ replaced by $\kappa_{\mu,\epsilon}$, which leads to (for any $\varphi \in \mathcal{D} (\mathbb{T}^3 \times (0,T);\mathbb{R})$)
\begin{align}
&\int_0^T \bigg\langle \partial_t \big( \omega^{\mu,\epsilon} u + \omega u^{\mu,\epsilon} \big) + \nabla \cdot (p \omega^{\mu,\epsilon} + p^{\mu,\epsilon} \omega) + \nabla \cdot \big[ (u \cdot \omega^{\mu,\epsilon}) u + (\omega \cdot u^{\mu,\epsilon}) u - (u \cdot u^{\mu,\epsilon}) \omega \big], \varphi \bigg\rangle dt \nonumber \\
&+ \frac{1}{2} \int_0^T \bigg\langle \nabla \cdot \big[ 2((\omega \cdot u) u)^{\mu,\epsilon} - 2(\omega \cdot u)^{\mu,\epsilon} u - ( \lvert u \rvert^2 \omega)^{\mu,\epsilon} + \omega (\lvert u \rvert^2)^{\mu,\epsilon} \big], \varphi \bigg\rangle dt \nonumber \\
&= \int_0^T \bigg\langle - D_{1,\mu,\epsilon} (u, \omega) + \frac{1}{2} D_{2,\mu,\epsilon} (u, \omega) ,  \varphi \bigg\rangle dt, \label{mollifiedsphericalbalance}
\end{align}
where the defect terms are now given by
\begin{align*}
\bigg\langle D_{1,\mu,\epsilon} (u, \omega) - \frac{1}{2} D_{2,\mu,\epsilon} (u, \omega), \varphi \bigg\rangle &= \int_{\mathbb{R}^3} \nabla \kappa_{\mu,\epsilon} (\xi) \cdot \big\langle \delta u (\xi ; \cdot, t) (\delta \omega (\xi ; \cdot, t) \cdot \delta u (\xi ; \cdot, t) ), \varphi \big\rangle d \xi \\
&- \frac{1}{2} \int_{\mathbb{R}^3} \nabla \kappa_{\mu,\epsilon} (\xi) \cdot \big\langle \delta \omega (\xi ; \cdot, t) \lvert \delta u (\xi ; \cdot, t) \rvert^2, \varphi \big\rangle d \xi.
\end{align*}
Note that the defect terms $D_{1,\mu,\epsilon} (u, \omega)$ and $D_{2,\mu,\epsilon} (u, \omega)$ are elements of $L^1 ((0,T);B^{-1/2}_{1,\infty} (\mathbb{T}^3))$. Now because the mollifier $\kappa_{\mu,\epsilon}$ is radial, we will change to spherical coordinates and write (where we make the change of variable $r \xi' \coloneqq \frac{\xi}{\epsilon}$ where we choose $\lvert \xi' \rvert = 1$ and we will relabel $\xi'$ by $\xi$)
\begin{align*}
&\bigg\langle D_{1,\mu,\epsilon} (u, \omega) - \frac{1}{2} D_{2,\mu,\epsilon} (u, \omega), \varphi \bigg\rangle \\
&= \int_{0}^\infty dr \frac{1}{\lVert \widetilde{\kappa}_{\mu,1} \rVert_{L^1}} \kappa' ( 1 + \mu^{-1} (r -1) ) \frac{r^3}{\epsilon \mu r} \bigg[ \int_{\mathbb{S}^2} \big\langle \xi \cdot \delta u (\epsilon r \xi ; \cdot,t) \big(\delta \omega (\epsilon r \xi ;\cdot,t) \cdot \delta u (\epsilon r \xi ; \cdot, t) \big), \varphi \big\rangle d S (\xi) \\
&- \frac{1}{2} \int_{\mathbb{S}^2} \big\langle \xi \cdot \delta \omega (\epsilon r \xi ; \cdot,t) \lvert \delta u (\epsilon r \xi ;\cdot,t) \rvert^2, \varphi \big\rangle d S (\xi) \bigg] \\
&= \int_0^\infty dr \frac{1}{\lVert \widetilde{\kappa}_{\mu,1} \rVert_{L^1}} \kappa' ( 1 + \mu^{-1} (r -1) ) \frac{r^3}{\epsilon \mu} \bigg[ \frac{\langle S_1 (u,\omega, \epsilon r), \varphi \rangle - \frac{1}{2} \langle S_2 (u,\omega, \epsilon r), \varphi \rangle}{r} \bigg].
\end{align*}
We now show that the right-hand side converges to $- \frac{3}{4 \pi \epsilon} \langle S_1 (u,\omega, \epsilon), \varphi \rangle + \frac{3}{8 \pi \epsilon} \langle S_2 (u,\omega, \epsilon), \varphi \rangle$, subtracting these terms from the equation gives
\begin{align*}
&\bigg\lvert \int_0^\infty dr \frac{1}{\lVert \widetilde{\kappa}_{\mu,1} \rVert_{L^1}} \kappa' ( 1 + \mu^{-1} (r -1) ) \frac{r^3}{\epsilon \mu} \bigg[ \frac{\langle S_1 (u,\omega, \epsilon r), \varphi \rangle - \frac{1}{2} \langle S_2 (u,\omega, \epsilon r), \varphi \rangle}{r} - \langle S_1 (u,\omega, \epsilon ), \varphi \rangle \\
&+ \frac{1}{2} \langle S_2 (u,\omega, \epsilon ), \varphi \rangle \bigg] \bigg\rvert \\
&\lesssim \frac{1}{\epsilon \mu} \int_{1 - \frac{1}{4} \mu}^{1 + \frac{1}{4} \mu} r^3 \bigg\lvert \frac{\langle S_1 (u,\omega, \epsilon r), \varphi \rangle}{r} - \langle S_1 (u,\omega, \epsilon), \varphi \rangle - \frac{1}{2} \frac{\langle S_2 (u,\omega, \epsilon r), \varphi \rangle}{r} \\
&+ \frac{1}{2} \langle S_2 (u,\omega, \epsilon), \varphi \rangle \bigg\rvert dr \xrightarrow[]{\mu \rightarrow 0} 0,
\end{align*}
where we have used the continuity of the translation operator in Besov spaces. We observe that the left-hand side of equation \eqref{mollifiedsphericalbalance} converges in the sense of distributions as $\mu \rightarrow 0$. This is because $\lim_{\mu \rightarrow 0} \kappa_{\mu,\epsilon} = \epsilon^{-3} 1_{B_\epsilon (0)} \eqqcolon \kappa_{0,\epsilon}$ in $L^p (\mathbb{R}^3)$ for any $1 \leq p < \infty$ (for fixed $\epsilon > 0$), where $B_\epsilon (0)$ is the ball of radius $\epsilon$ around $0$. Therefore we find that
\begin{align}
&\int_0^T \bigg\langle \partial_t \big( \omega^{0,\epsilon} u + \omega u^{0,\epsilon} \big) + \nabla \cdot (p \omega^{0,\epsilon} + p^{0,\epsilon} \omega) + \nabla \cdot \big[ (u \cdot \omega^{0,\epsilon}) u + (\omega \cdot u^{0,\epsilon}) u - (u \cdot u^{0,\epsilon}) \omega \big], \varphi \bigg\rangle dt \nonumber \\
&+ \frac{1}{2} \int_0^T \bigg\langle \nabla \cdot \big[ 2((\omega \cdot u) u)^{0,\epsilon} - 2(\omega \cdot u)^{0,\epsilon} u - ( \lvert u \rvert^2 \omega)^{0,\epsilon} + \omega (\lvert u \rvert^2)^{0,\epsilon} \big], \varphi \bigg\rangle dt \nonumber \\
&= \frac{3}{4 \pi} \bigg\langle \frac{S_1 (u,\omega,\epsilon)}{\epsilon} - \frac{1}{2} \frac{S_2 (u,\omega,\epsilon)}{\epsilon} , \varphi \bigg\rangle_{x,t}. \label{sphericalaverageexpression}
\end{align}
We observe that as $\epsilon \rightarrow 0$ the left-hand side of equation \eqref{sphericalaverageexpression} converges in $W^{-1,1} ((0,T); W^{-2,1} (\mathbb{T}^3))$ to the left-hand side of equation \eqref{limithelicitybalance}. Therefore we can conclude that the limit of the spherical averages $\lim_{\epsilon \rightarrow 0} \bigg[ \frac{1}{\epsilon} S_1 (u,\omega,\epsilon) - \frac{1}{2 \epsilon} S_2 (u,\omega,\epsilon) \bigg]$ is well-defined as a distribution and is equal to the right-hand side of equation \eqref{limithelicitybalance}, which leads to the following equality
\begin{equation*}
\bigg\langle - D_{1} (u, \omega) + \frac{1}{2} D_{2} (u, \omega) ,  \varphi \bigg\rangle_{x,t} = \frac{3}{4 \pi} \bigg\langle \lim_{\epsilon \rightarrow 0} \bigg[ \frac{S_1 (u,\omega,\epsilon)}{\epsilon} - \frac{1}{2} \frac{S_2 (u,\omega,\epsilon)}{\epsilon} \bigg] , \varphi \bigg\rangle_{x,t}.
\end{equation*}
We observe that the left-hand side can be formally identified (up to a minus sign) with the average rate of change of the helicity in the vanishing viscosity limit if one divides by a factor of $4$. Moreover, we note that the factors $(4 \pi)^{-1}$ are part of the spherical averages. Therefore we can rewrite this relation as follows
\begin{equation}
-\frac{4}{3} \cdot \frac{1}{4} \bigg\langle D_1 (u, \omega) - \frac{1}{2} D_2 (u, \omega), \varphi \bigg\rangle_{x,t} = \bigg\langle \lim_{\epsilon \rightarrow 0} \bigg[ \frac{1}{4 \pi} \frac{S_1 (u,\omega,\epsilon)}{\epsilon} - \frac{1}{2} \cdot \frac{1}{4 \pi} \frac{S_2 (u,\omega,\epsilon)}{\epsilon} \bigg] , \varphi \bigg\rangle_{x,t},
\end{equation}
which holds in $W^{-1,1} ((0,T); W^{-2,1} (\mathbb{T}^3))$ and is in agreement with equation \eqref{scalinglaw}, and therefore concludes the proof.
\end{proof}

Therefore we can conclude that the scaling law derived in \cite{gomez} holds for functional vorticity solutions of the Euler equations with sufficient regularity. 
\begin{remark}
For the sake of completeness, we will now explain how we have interpreted the ensemble averages in equation \eqref{scalinglaw} in the proof of Theorem \ref{scalingthm}. For a function $f : \mathbb{T}^3 \times \mathbb{T}^3 \times [0,T] \rightarrow \mathbb{R}$ (we will denote the arguments by $z, x$ and $t$, respectively) we define (cf. \cite{duchon,eyinklocal})
\begin{equation} \label{ensembleaveragedefinition}
\langle f \rangle (r,x,t) \coloneqq \int_{\mathbb{S}^2} d S (\widehat{z}) f (r \widehat{z}, x,t),
\end{equation}
where we have written $z = r \widehat{z}$ such that $\lvert \widehat{z} \rvert = 1$ and $r \geq 0$. As has been mentioned before in this section, such objects can be defined even if $f$ has negative Besov regularity. Now we observe that Theorem \ref{scalingthm} establishes equation \eqref{scalinglaw} in the limit $\xi \rightarrow 0$. This means that we need to show that the following limit converges (in the sense of distributions)
\begin{equation*}
\lim_{r \rightarrow 0} \frac{\langle f \rangle (r,x,t)}{r}.
\end{equation*}
For smooth functions, such a limit can only converge if $f(0,x,t) \equiv 0$. In the case of Theorem \ref{scalingthm}, because $f$ consists of increments of the velocity and vorticity this condition is automatically satisfied. Then by using the (mollified) local helicity balance we show that this limit converges in the sense of distributions and show that it is related to the defect terms, which then concludes the proof of the scaling law \eqref{scalinglaw}.
\end{remark}

\section{On the conservation of magnetic helicity in the MHD equations} \label{MHDsection}
Now we turn our attention to the inviscid and irresistive MHD equations. In this section we will provide sufficient conditions for the conservation of magnetic helicity, which are different from those stated in \cite{kang}. We note that in simply connected domains the magnetic helicity as defined in this paper is gauge-independent. We refer to \cite{faraco} (and references therein) for gauge-invariant definitions of the magnetic helicity in general domains. 

As was already noted in the introduction, for weak solutions it need not be true that $\nabla \cdot B$ remains zero, if the magnetic field is initially divergence-free. This point will be addressed further in Section \ref{divergencesection}. In this section, however, we will assume throughout that the magnetic field is weakly divergence-free.

We now define a weak solution to the MHD equations \eqref{MHD1}-\eqref{MHD3}.
\begin{definition} \label{weaksolMHDdivergencefree}
We call a pair $(u,B) \in L^\infty ((0,T); L^2 (\mathbb{T}^3))$ a weak solution of the MHD equations \eqref{MHD1}-\eqref{MHD3} if for all divergence-free $\psi_1, \psi_2 \in \mathcal{D} (\mathbb{T}^3 \times (0,T);\mathbb{R}^3)$ it holds that
\begin{align}
&\int_0^T \int_{\mathbb{T}^3} \bigg[ u \partial_t \psi_1 + u \otimes u : \nabla \psi_1 - B \otimes B : \nabla \psi_1 \bigg] dx dt = 0, \label{mhdweakform1} \\
&\int_0^T \int_{\mathbb{T}^3} \bigg[ B \partial_t \psi_2 + u \otimes B : \nabla \psi_2 - B \otimes u : \nabla \psi_2 \bigg] dx dt = 0, \label{mhdweakform2}
\end{align}
and $B$ and $u$ is weakly divergence-free. 
\end{definition}
We now prove the following sufficient condition for conservation of the magnetic helicity.
\begin{theorem} \label{maghelicityconservation}
Let $(u,B)$ be a weak solution (in the sense of Definition \ref{weaksolMHDdivergencefree}) of the MHD equations \eqref{MHD1}-\eqref{MHD3}. Moreover, let $0 < \alpha_1 < \alpha_2 < 1$  such that $u \in L^2 ((0,T); B^{\alpha_2}_{4,\infty} (\mathbb{T}^3) )$ and $B \in L^2 ((0,T); B^{-\alpha_1}_{4,\infty} (\mathbb{T}^3))$. Then the magnetic helicity (as given in equation \eqref{magnetichelicity}) is conserved.
\end{theorem}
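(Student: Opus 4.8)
The plan is to mirror the strategy of Section \ref{eulersection}: pass to the potential formulation, mollify the induction equation, derive a balance law for a regularised magnetic helicity density whose right-hand side is a commutator flux, and then show that this flux vanishes under the stated Besov regularity. First I would work with the vector potential $A = \nabla\times(-\Delta)^{-1}B$ from \eqref{potentialequation}, so that $\nabla\times A = B$, $\nabla\cdot A = 0$, and $A\in L^2((0,T);B^{1-\alpha_1}_{4,\infty}(\mathbb{T}^3))$ by elliptic regularity. Mollifying the induction equation \eqref{MHD2} (written as $\partial_t B = \nabla\times(u\times B)$) gives $\partial_t B^\epsilon = \nabla\times(u\times B)^\epsilon$, and since $A^\epsilon = \nabla\times(-\Delta)^{-1}B^\epsilon$ this yields $\partial_t A^\epsilon = \mathbb{P}\big((u\times B)^\epsilon\big)$, where $\mathbb{P}$ is the Leray projection and the discarded gradient is exactly the mollified gauge term $\nabla\tilde\phi^\epsilon$ of \eqref{mhdpotential1}. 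Because $\nabla\times A^\epsilon = B^\epsilon$, integration by parts gives the clean identity
\[
\frac{d}{dt}\int_{\mathbb{T}^3} A^\epsilon\cdot B^\epsilon \, dx = 2\int_{\mathbb{T}^3} \partial_t A^\epsilon\cdot B^\epsilon \, dx = 2\int_{\mathbb{T}^3} (u\times B)^\epsilon\cdot B^\epsilon \, dx,
\]
the gauge contribution dropping out by $\nabla\cdot B^\epsilon = 0$.

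The heart of the argument is to show that the flux on the right converges to zero. Here the decisive structural fact is the pointwise cancellation $B\times B = 0$: since $(u^\epsilon\times B^\epsilon)\cdot B^\epsilon = u^\epsilon\cdot(B^\epsilon\times B^\epsilon)=0$, I may subtract it and, after moving the mollifications by self-adjointness, rewrite the flux as a trilinear commutator of the schematic form $\int_{\mathbb{T}^3} u\cdot(B\times B^\epsilon) \, dx = \int_{\mathbb{T}^3} u\cdot\big(B\times(B^\epsilon - B)\big) \, dx$. Expanding the mollification as an average over increments turns this into $\int\!\!\int \phi_\epsilon(\xi)\,u(x)\cdot\big(B(x)\times\delta B(\xi;x)\big)\,d\xi\,dx$, in which the cancellation has traded one of the two factors of $B$ for a single increment; after a Duchon--Robert symmetrisation it can be recast as an expression weighted by $\nabla\phi_\epsilon$ with increments of both $u$ and $B$, analogous to the defect of Theorem \ref{helicitybalance}. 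I would then estimate this object by the Bony paraproduct decomposition exactly as in Proposition \ref{sufficientconditioneuler} and Lemma \ref{paraproductlemma}, distributing the regularity so that $u$ supplies a factor $\lvert\xi\rvert^{\alpha_2}$ while the magnetic field contributes $\lvert\xi\rvert^{-\alpha_1}$, the cross-product structure being precisely what renders the remaining trilinear form well defined. Since the net scaling is then $\lvert\xi\rvert^{\alpha_2-\alpha_1}$ with $\alpha_2>\alpha_1$, and $u\times B\in L^1((0,T);B^{-\alpha_1}_{2,\infty}(\mathbb{T}^3))$ because $\alpha_2-\alpha_1>0$, a change of variables $\xi=\epsilon z$ followed by dominated convergence (as in Proposition \ref{sufficientconditioneuler}) forces the flux to vanish as $\epsilon\to0$.

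Finally I would pass to the limit in the integrated balance. Integrating the displayed identity over $(t_1,t_2)$ and letting $\epsilon\to0$, the left-hand side converges to $\mathcal{H}_{B,B}(t_2)-\mathcal{H}_{B,B}(t_1)$: the pairing $\int_{\mathbb{T}^3} A^\epsilon\cdot B^\epsilon \, dx = \langle\nabla\times(-\Delta)^{-1}B^\epsilon,\,B^\epsilon\rangle$ is continuous in the relevant Besov regularity, with the product and duality justified by Lemma \ref{paraproductlemma}, and $B^\epsilon\to B$ in $L^2((0,T);B^{-\alpha_1}_{4,\infty}(\mathbb{T}^3))$. Using the $L^2$-in-time integrability to make sense of $\mathcal{H}_{B,B}$ at almost every time (as in the proof of Theorem \ref{conservationeuler}), together with the vanishing of the flux, I conclude $\mathcal{H}_{B,B}(t_1)=\mathcal{H}_{B,B}(t_2)$ for almost every $t_1,t_2\in(0,T)$, which is the assertion of \eqref{magnetichelicity}.

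The main obstacle is the flux estimate of the second paragraph. A naive Hölder bound on $(u\times B)^\epsilon\cdot B^\epsilon$ treating the commutator $r_\epsilon(u,B)$ and $B^\epsilon$ separately gives only $\|r_\epsilon(u,B)\|_{L^2}\,\|B^\epsilon\|_{L^2}\lesssim\epsilon^{\alpha_2-\alpha_1}\cdot\epsilon^{-\alpha_1}=\epsilon^{\alpha_2-2\alpha_1}$, and would therefore demand the stronger hypothesis $\alpha_2>2\alpha_1$. Extracting the sharp condition $\alpha_2>\alpha_1$ depends entirely on genuinely exploiting the antisymmetry $B\times B=0$ \emph{before} estimating, so that only a single increment of $B$ rather than two must be controlled; arranging the paradifferential calculus to capture this cancellation, rather than bounding the two magnetic factors independently, is the delicate point of the proof.
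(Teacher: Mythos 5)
Your first and last paragraphs do match the paper's proof: the paper likewise derives the regularised balance $\frac{d}{dt}\int_{\mathbb{T}^3}A^\epsilon\cdot B^\epsilon\,dx=2\int_{\mathbb{T}^3}B^\epsilon\cdot(u\times B)^\epsilon\,dx$ and passes to the limit at the end. The gap is in your second paragraph, i.e.\ precisely the step you call the heart of the argument. After your rearrangement the flux reads $\int_{\mathbb{T}^3}u\cdot\big(B\times(B^{\epsilon\epsilon}-B)\big)\,dx$, and now \emph{all} of the smallness must come from the factor $B^{\epsilon\epsilon}-B$, because $u$ and the middle $B$ appear raw, with no increments. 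But $B$ has \emph{negative} regularity: the only available gain is inequality \eqref{mollificationbesovineq2}, $\lVert B^{\epsilon\epsilon}-B\rVert_{B^{-\alpha_2}_{4,1}}\lesssim\epsilon^{\alpha_2-\alpha_1}\lVert B\rVert_{B^{-\alpha_1}_{4,\infty}}$, which lands in a space of regularity $-\alpha_2<0$; to use it you must pair against $u\times B$, which by Lemma \ref{paraproductlemma} lies only in $B^{-\alpha_1}_{2,\infty}(\mathbb{T}^3)$ --- again negative regularity --- so the duality pairing is not even defined and no estimate closes. Your fallback, a Duchon--Robert symmetrisation ``weighted by $\nabla\phi_\epsilon$'', makes things worse: the gradient of the mollifier costs a factor $\epsilon^{-1}$, and since $\delta B$ supplies no positive power of $\lvert\xi\rvert$, the best conceivable rate is $\epsilon^{\alpha_2-\alpha_1-1}$, which diverges. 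There is also a Fubini obstruction in your intermediate expression $\int\!\!\int\phi_\epsilon(\xi)\,u\cdot(B\times\delta B(\xi;\cdot))\,d\xi\,dx$: at fixed $\xi\neq0$ the $x$-integrand is a product of factors in $L^4$, $L^2$, $L^2$ (H\"older exponents summing to $5/4>1$), hence not absolutely integrable, so you cannot estimate $\xi$ by $\xi$; only the $\xi$-integrated quantity, with the mollified and hence bounded factor, makes sense.

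Moreover, the diagnosis in your final paragraph --- that the commutator route ``treating $r_\epsilon(u,B)$ and $B^\epsilon$ separately'' yields only $\epsilon^{\alpha_2-2\alpha_1}$ and hence needs $\alpha_2>2\alpha_1$ --- is incorrect, and this is exactly where the paper's proof lives. The paper uses the commutator identity \eqref{curlcommutator}; the cancellation $B^\epsilon\cdot(u^\epsilon\times B^\epsilon)=0$ (the same antisymmetry you invoke) kills the only term with no increments, and each of the two remaining terms is estimated by H\"older with exponents $(4,4,2)$: the mollified field $B^\epsilon$ goes in $L^4$ at cost $\epsilon^{-\alpha_1}$ by \eqref{mollificationbesovineq3}, the $u$-increment ($\delta u$, respectively $u-u^\epsilon$) goes in $L^4$ with gain $\epsilon^{\alpha_2}$, and the $B$-increment ($\delta B$, respectively $B-B^\epsilon$) goes in $L^2$ with \emph{no gain at all}, using only $B\in L^\infty((0,T);L^2(\mathbb{T}^3))$ from Definition \ref{weaksolMHDdivergencefree}. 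The net rate is $\epsilon^{\alpha_2-\alpha_1}$, so $\alpha_2>\alpha_1$ suffices. In other words, the sharp condition does not require exploiting $B\times B=0$ ``before estimating''; it requires allocating the H\"older exponents so that the loss $\epsilon^{-\alpha_1}$ is paid exactly once (on the mollified $B$) and the gain comes from $u$, the one factor with positive regularity. Your rearrangement inverts this allocation, and the proof cannot be repaired along the lines you describe.
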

The proof will use a commutator estimate, of the type first introduced in \cite{constantin} (see also \cite{titi2018}).
\begin{proof}
We first recall the following commutator identity from \cite{kang} (see also \cite{titi2018})
\begin{equation} \label{curlcommutator}
(u \times B)^\epsilon = u^\epsilon \times B^\epsilon + r_\epsilon (u,B) - (u - u^\epsilon) \times (B - B^\epsilon),
\end{equation}
where we have defined
\begin{equation} \label{repsilondef}
r_\epsilon (u,B) = \int_{\mathbb{R}^3} \phi_\epsilon (\xi) \big( \delta u (\xi ; x,t) \times \delta B (\xi ; x,t) \big) d \xi. 
\end{equation}
We observe that identity \eqref{curlcommutator} holds in the space $L^1 (\mathbb{T}^3 \times (0,T))$. Then by proceeding as in \cite{kang}, we find that the difference in magnetic helicity in time can be bounded as follows (for $0 < t_1 < t_2 < T$)
\begin{align*}
&\bigg\lvert \int_{\mathbb{T}^3} A^\epsilon \cdot B^\epsilon (x,t_2) dx - \int_{\mathbb{T}^3} A^\epsilon \cdot B^\epsilon (x,t_1) dx \bigg\rvert = \left\lvert \int_{t_1}^{t_2} \int_{\mathbb{T}^3} \bigg[ A^\epsilon \cdot (\nabla \times (u \times B)^\epsilon) + B^\epsilon \cdot (u \times B)^\epsilon \bigg] dx dt \right\rvert \\
&= 2 \bigg\lvert \int_{t_1}^{t_2} \int_{\mathbb{T}^3}  B^\epsilon \cdot (u \times B)^\epsilon dx dt \bigg\rvert = 2 \bigg\lvert \int_{t_1}^{t_2} \int_{\mathbb{T}^3} B^\epsilon \cdot \big[ u^\epsilon \times B^\epsilon + r_\epsilon (u,B) - (u - u^\epsilon) \times (B - B^\epsilon) \big] dx dt \bigg\rvert \\
&\leq 2 \bigg\lvert \int_{t_1}^{t_2} \int_{\mathbb{T}^3} B^\epsilon \cdot  r_\epsilon (u,B)  dx dt \bigg\rvert + 2 \bigg\lvert \int_{t_1}^{t_2} \int_{\mathbb{T}^3} B^\epsilon \cdot \big( (u - u^\epsilon) \times (B - B^\epsilon) \big) dx dt \bigg\rvert \eqqcolon I_1 + I_2,
\end{align*}
where in the first line we have used the regularisation of the MHD equations as given in equations \eqref{MHD1}-\eqref{MHD3} and \eqref{mhdpotential1}-\eqref{mhdpotential3} respectively (as well as the divergence-free property of the magnetic field), and in the second line we have used identity \eqref{curlcommutator}. Next we estimate the two integral terms. We find
\begin{align*}
I_2 &\leq \int_{t_1}^{t_2} \lVert B^\epsilon \rVert_{L^4} \lVert u - u^\epsilon \rVert_{L^4} \lVert B - B^\epsilon \rVert_{L^2} dt \leq \epsilon^{-\alpha_1} \lVert B \rVert_{L^2 (B^{-\alpha_1}_{4,\infty})} \cdot \epsilon^{\alpha_2} \lVert u \rVert_{L^2 (B^{\alpha_2}_{4,\infty})} \lVert B \rVert_{L^\infty (L^2)} \\
&\leq \epsilon^{\alpha_2 - \alpha_1} \lVert B \rVert_{L^2 (B^{-\alpha_1}_{4,\infty})} \lVert u \rVert_{L^2 (B^{\alpha_2}_{4,\infty})} \lVert B \rVert_{L^\infty (L^2)}, 
\end{align*}
where we have used inequality \eqref{mollificationbesovineq3}. The estimate of the term $I_1$ proceeds in a similar fashion. Therefore we can infer that
\begin{align*}
&\bigg\lvert \int_{\mathbb{T}^3} A^\epsilon \cdot B^\epsilon (x,t_2) dx - \int_{\mathbb{T}^3} A^\epsilon \cdot B^\epsilon (x,t_1) dx \bigg\rvert \leq C \epsilon^{\alpha_2 - \alpha_1} \lVert B \rVert_{L^2 (B^{-\alpha_1}_{4,\infty})} \lVert u \rVert_{L^2 (B^{\alpha_2}_{4,\infty})} \lVert B \rVert_{L^\infty (L^2)}.
\end{align*}
Therefore, as $\epsilon \rightarrow 0$, the above together with the assumptions of the theorem imply that
\begin{equation}
\int_{\mathbb{T}^3} A \cdot B (x,t_1) dx = \int_{\mathbb{T}^3} A \cdot B (x,t_2) dx,
\end{equation}
for almost every time $t_1, t_2 \in (0,T)$.
Hence we conclude that the magnetic helicity is conserved for almost all times in $(0,T)$. 

\end{proof}

\begin{remark}
We observe that the sufficient criterion for conservation of magnetic helicity that we prove in Theorem \ref{maghelicityconservation}, differs from the criterion proven in \cite{kang}. Indeed, without further spatial $L^p$ regularity assumptions on $B$ (for $p > 2$), one would have to require that $u \in L^1 ((0,T); L^\infty (\mathbb{T}^3))$ to bound integrals of the form $\int_{\mathbb{T}^3} B^\epsilon \cdot \big( (u - u^\epsilon) \times (B - B^\epsilon) \big) dx$. These integrals appear as part of the commutator estimates. However, we observe that the Besov space $B^{\alpha}_{4,\infty} (\mathbb{T}^3)$ embeds into $L^\infty (\mathbb{T}^3)$ only for $\alpha > \frac{3}{4}$. Hence our results are new in the range $0 < \alpha \leq \frac{3}{4}$.  \\
In addition, one can easily see that the proof of Theorem \ref{maghelicityconservation} can be adapted to show that conservation of magnetic helicity still holds if $u \in L^2 ((0,T); B^{\alpha_2}_{p,\infty} (\mathbb{T}^3))$ and $B \in L^2 ((0,T); \linebreak B^{-\alpha_1}_{q,\infty} (\mathbb{T}^3))$ for $\frac{1}{p} + \frac{1}{q} = \frac{1}{2}$ and $0 < \alpha_1 < \alpha_2 < 1$. 
\end{remark}

\section{Remarks on the kinematic dynamo model} \label{dynamosection}
If the effects of the magnetic field on the fluid can be neglected (i.e., the Lorentz force is small), one can take the velocity field to be given and consider the kinematic dynamo model (i.e., restrict to the induction equation in the MHD equations \eqref{MHD1}-\eqref{MHD3}) \cite{davidson}
\begin{equation} \label{inductioneq}
\partial_t B + \nabla \times (B \times u) = 0, \quad \nabla \cdot B = 0,
\end{equation}
where the divergence-free velocity field $u$ is given.
In the viscous case this equation was studied in \cite{friedlander}. We note that solutions of this equation still conserve magnetic helicity. However, unlike the full MHD system, equation \eqref{inductioneq} is linear (in the magnetic field). This makes it possible to introduce an analogue of the functional vorticity solutions to the kinematic dynamo equation, which we call functional weak solutions.
\begin{definition} \label{functionalsolutiondef}
Let $\alpha > 0$ and let $u \in L^\infty ((0,T);H^{\alpha+} (\mathbb{T}^3))$ be given. We call $B \in L^\infty ((0,T);H^{-\alpha} (\mathbb{T}^3))$ a functional weak solution of the kinematic dynamo equation if for all $\psi \in \mathcal{D} (\mathbb{T}^3 \times (0,T);\mathbb{R}^3)$ it satisfies
\begin{equation}
\int_0^T \langle B , \partial_t \psi \rangle dt - \int_0^T \langle B \times u, \nabla \times \psi \rangle dt = 0,
\end{equation}
where $\langle \cdot, \cdot \rangle$ denote the spatial distributional duality brackets as before. Moreover, the magnetic field $B$ is divergence-free in the sense of distributions, i.e. for all $\varphi \in \mathcal{D} (\mathbb{T}^3 \times (0,T); \mathbb{R})$ we have
\begin{equation}
\int_0^T \langle B, \nabla \varphi \rangle dt = 0.
\end{equation}
\end{definition}
\begin{remark}
We observe that it is also possible to define functional weak solutions in terms of the weak formulation of the induction equation in potential form (i.e. equation \eqref{mhdpotential1}), which is
\begin{equation}
\int_0^T \int_{\mathbb{T}^3} A \partial_t \psi dx dt - \int_0^T \langle B \times u, \psi \rangle dt + \int_0^T \langle \tilde{\phi}, \nabla \cdot \psi \rangle dt = 0. 
\end{equation}
Moreover, we note that the $L^\infty$ integrability in time in Definition \ref{functionalsolutiondef} is not strictly necessary, as $L^2$ temporal integrability for $u$ and $B$ is sufficient to make sense of the nonlinear terms. However, to study sufficient conditions for conservation of the magnetic helicity, $L^\infty$ regularity in time is essential. 
\end{remark}
We now prove a sufficient condition for functional weak solutions to conserve magnetic helicity.
\begin{theorem}
Let $B$ be a functional weak solution of the kinematic dynamo equation \eqref{inductioneq} for a given velocity field $u \in L^3 ((0,T);B^{\beta}_{3,\infty} (\mathbb{T}^3))$ such that $B \in L^3 ((0,T);B^{-\alpha}_{3,\infty} (\mathbb{T}^3))$ with $0 < 3\alpha < \beta < 1$. Moreover, let $A$ be the corresponding magnetic potential satisfying equation \eqref{potentialequation}. Then the functional weak solution conserves magnetic helicity, that is
\begin{equation}
\langle B, A \rangle (t_1) = \langle B, A \rangle (t_2),
\end{equation}
for almost all times $t_1, t_2 \in (0,T)$. 
\end{theorem}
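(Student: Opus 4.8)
The plan is to adapt the mollification-and-commutator argument used in the proof of Theorem~\ref{maghelicityconservation}, the essential new difficulty being that here the magnetic field $B$ lies in a \emph{negative} regularity space, so that $B$ and $B-B^\epsilon$ are genuine distributions rather than $L^p$ functions. First I would mollify and work with the smooth quantity $\int_{\mathbb{T}^3} A^\epsilon\cdot B^\epsilon\,dx$, where $A^\epsilon=A*\phi_\epsilon$ and $B^\epsilon=B*\phi_\epsilon$; since mollification commutes with $\nabla\times(-\Delta)^{-1}$ we retain $\nabla\times A^\epsilon=B^\epsilon$ and the divergence-free property of $B^\epsilon$. The goal is to show that $\int A^\epsilon\cdot B^\epsilon\,dx$ has a time derivative that tends to $0$ in $L^1((t_1,t_2))$ as $\epsilon\to 0$, while $\int A^\epsilon\cdot B^\epsilon\,dx\to\langle B,A\rangle$ pointwise in $t$, so that $\langle B,A\rangle(t_2)-\langle B,A\rangle(t_1)$ vanishes for almost every $t_1,t_2$.

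For the time derivative I would use the potential form (with $u$ given), namely $\partial_t A^\epsilon=-(B\times u)^\epsilon-\nabla\tilde{\phi}^\epsilon$ together with $\partial_t B^\epsilon=-\nabla\times(B\times u)^\epsilon$, cf. \eqref{mhdpotential1} and \eqref{inductioneq}. Integrating by parts the gauge term drops by $\nabla\cdot B^\epsilon=0$, and using $\int A^\epsilon\cdot\nabla\times(B\times u)^\epsilon\,dx=\int B^\epsilon\cdot(B\times u)^\epsilon\,dx$ the two contributions combine to $\frac{d}{dt}\int A^\epsilon\cdot B^\epsilon\,dx=-2\int_{\mathbb{T}^3}B^\epsilon\cdot(B\times u)^\epsilon\,dx$. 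I would then insert the commutator identity \eqref{curlcommutator} (with $r_\epsilon$ as in \eqref{repsilondef}) applied to $B\times u$; the leading term $\int B^\epsilon\cdot(B^\epsilon\times u^\epsilon)\,dx$ vanishes pointwise since $B^\epsilon\perp(B^\epsilon\times u^\epsilon)$, leaving only $J_1:=\int B^\epsilon\cdot\big(\int_{\mathbb{R}^3}\phi_\epsilon(\xi)\,\delta B(\xi)\times\delta u(\xi)\,d\xi\big)\,dx$ and $J_2:=\int B^\epsilon\cdot\big((B-B^\epsilon)\times(u-u^\epsilon)\big)\,dx$.

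The heart of the argument is the estimation of $J_1$ and $J_2$, and I expect this to be the main obstacle. Since $B-B^\epsilon$ is not a function, the triple-H\"older estimate of Theorem~\ref{maghelicityconservation} is unavailable; instead I would use the scalar triple-product identity to rewrite $J_2=\langle B-B^\epsilon,\,(u-u^\epsilon)\times B^\epsilon\rangle$ as a duality pairing of $B^{-\alpha}_{3,\infty}$ with its predual $B^{\alpha}_{3/2,1}$, and likewise $J_1=\int_{\mathbb{R}^3}\phi_\epsilon(\xi)\,\langle\delta B(\xi),\,\delta u(\xi)\times B^\epsilon\rangle\,d\xi$, where $\delta B$ is defined by duality as in \eqref{dualityincrement} and the $\xi$-integral is justified by first mollifying $B$ and applying Fubini, exactly as in Proposition~\ref{defectidentitylemma} and Remark~\ref{defectidentityremark}. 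The factors $\|B-B^\epsilon\|_{B^{-\alpha}_{3,\infty}}$ and $\|\delta B(\xi)\|_{B^{-\alpha}_{3,\infty}}$ remain of order one, so everything rests on estimating $(u-u^\epsilon)\times B^\epsilon$ and $\delta u(\xi)\times B^\epsilon$ in $B^{\alpha}_{3/2,1}$. By the paraproduct inequalities \eqref{paradiffineq1}--\eqref{paradiffineq3}, a product of factors in $B^{s_1}_{3,\infty}$ and $B^{s_2}_{3,\infty}$ with $\min(s_1,s_2)>\alpha$ embeds into $B^{\alpha}_{3/2,1}$; choosing $s_1,s_2$ slightly above $\alpha$ and inserting the scalings $\|B^\epsilon\|_{B^{s_2}_{3,\infty}}\lesssim\epsilon^{-(s_2+\alpha)}\|B\|_{B^{-\alpha}_{3,\infty}}$, $\|u-u^\epsilon\|_{B^{s_1}_{3,\infty}}\lesssim\epsilon^{\beta-s_1}\|u\|_{B^{\beta}_{3,\infty}}$, and $\|\delta u(\xi)\|_{B^{s_1}_{3,\infty}}\lesssim|\xi|^{\beta-s_1}\|u\|_{B^{\beta}_{3,\infty}}$ (with $|\xi|\lesssim\epsilon$ on $\supp\phi_\epsilon$) yields in both cases a bound of order $\epsilon^{\beta-3\alpha}$, up to an arbitrarily small loss in the exponent. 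This is precisely where the hypothesis $3\alpha<\beta$ is consumed, forcing $J_1,J_2\to 0$; integrating in $t$ and applying H\"older against the $L^3_t$ norms of $u$ and $B$ then kills the time-integrated commutator.

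Finally, to identify the limit with the helicity itself, note that $A=\nabla\times(-\Delta)^{-1}B$ gains one derivative, so $A\in B^{1-\alpha}_{3,\infty}$ with $1-\alpha>\alpha$ (as $\alpha<\tfrac{1}{3}$); hence $A$ embeds into the predual $B^{\alpha}_{3/2,1}$ and $\langle B,A\rangle$ is well defined pointwise in $t$ using the $L^\infty$-in-time regularity built into Definition~\ref{functionalsolutiondef}. Moving both mollifiers onto $A$ via the radiality of $\phi$, one has $\int A^\epsilon\cdot B^\epsilon\,dx=\langle B,\,A*(\phi_\epsilon*\phi_\epsilon)\rangle$, and since the mollification of $A\in B^{\alpha}_{3/2,1}$ converges strongly in $B^{\alpha}_{3/2,1}$ (the third index being finite), the pairing against $B\in B^{-\alpha}_{3,\infty}$ converges to $\langle B,A\rangle$ for almost every $t$. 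Combining the three ingredients gives $\langle B,A\rangle(t_1)=\langle B,A\rangle(t_2)$ for almost every $t_1,t_2\in(0,T)$, with the subsidiary technical point being the rigorous justification of the $\xi$-integral representation of $J_1$ for the distributional increment $\delta B$.
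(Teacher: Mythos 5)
Your proposal is correct and follows essentially the same route as the paper's proof: mollify, derive $\tfrac{d}{dt}\int_{\mathbb{T}^3} A^\epsilon\cdot B^\epsilon\,dx=-2\int_{\mathbb{T}^3} B^\epsilon\cdot(B\times u)^\epsilon\,dx$, insert the commutator identity \eqref{curlcommutator}, and show both error terms vanish at rate $\epsilon^{\beta-3\alpha-\theta}$ using the paraproduct, difference-quotient and mollification estimates of Lemmas \ref{paraproductlemma}, \ref{diffquotientlemma} and \ref{mollificationbesovlemma}, exactly as the paper does. The only (cosmetic) difference is the placement of the duality: the paper keeps the product $(u-u^\epsilon)\times(B-B^\epsilon)$ together in $B^{-\alpha}_{3/2,\infty}$ and pairs it against $B^\epsilon\in B^{\alpha}_{3,1}$, whereas you rearrange the triple product so that the distributional factor $B-B^\epsilon$ (resp. $\delta B$) stands alone in $B^{-\alpha}_{3,\infty}$ against $(u-u^\epsilon)\times B^\epsilon\in B^{\alpha}_{3/2,1}$; the exponent bookkeeping and the use of $3\alpha<\beta$ are identical.
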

\begin{proof}
By proceeding in a similar fashion as in the proof of Theorem \ref{maghelicityconservation}, we find (for almost every $t_1, t_2 \in (0,T)$ with $t_1 < t_2$)
\begin{align*}
&\bigg\lvert \int_{\mathbb{T}^3} A^\epsilon \cdot B^\epsilon (x,t_2) dx - \int_{\mathbb{T}^3} A^\epsilon \cdot B^\epsilon (x,t_1) dx \bigg\rvert \\
&\leq 2 \bigg\lvert \int_{t_1}^{t_2} \big\langle  r_\epsilon (u,B) , B^\epsilon  \rangle dt \bigg\rvert + 2 \bigg\lvert \int_{t_1}^{t_2}\big\langle (u - u^\epsilon) \times (B - B^\epsilon) , B^\epsilon \big\rangle dt \bigg\rvert \eqqcolon I_1 + I_2,
\end{align*}
where $r_\epsilon (u,B)$ was defined in equation \eqref{repsilondef}. By an argument similar to the proof of Proposition \ref{defectidentitylemma}, we obtain that
\begin{equation*}
\int_{t_1}^{t_2} \big\langle  r_\epsilon (u,B) , B^\epsilon  \rangle dt = \int_{t_1}^{t_2} \int_{\mathbb{R}^3} \phi_\epsilon (\xi) \big\langle  \big( \delta u (\xi ; x,t) \times \delta B (\xi ; x,t) \big) , B^\epsilon  \rangle d \xi dt.
\end{equation*}
Using this identity, we can estimate $I_1$ in the following manner (where we fix $\theta > 0$ sufficiently small such that $\beta - 3 \alpha - \theta > 0$)
\begin{align*}
I_1 &\lesssim \epsilon^{-2\alpha} \int_{t_1}^{t_2} \bigg[ \int_{\mathbb{R}^3} \phi_\epsilon (\xi) \lVert \delta u (\xi; \cdot, t) \times \delta B (\xi;\cdot,t) \rVert_{B^{-\alpha}_{3/2,\infty}} d \xi \bigg] \lVert B \rVert_{B^{-\alpha}_{3,\infty}} dt \\
&\lesssim \epsilon^{-2\alpha} \int_{t_1}^{t_2} \bigg[ \int_{\mathbb{R}^3} \phi_\epsilon (\xi) \lVert \delta u (\xi; \cdot, t) \rVert_{B^{\alpha + \theta}_{3,\infty}} d \xi \bigg] \lVert B \rVert_{B^{-\alpha}_{3,\infty}}^2 dt \\
&\lesssim \epsilon^{\beta-3\alpha-\theta} \int_{t_1}^{t_2} \lVert u  \rVert_{B^{\beta}_{3,\infty}} \lVert B \rVert_{B^{-\alpha}_{3,\infty}}^2 dt \xrightarrow[]{\epsilon \rightarrow 0} 0,
\end{align*}
where we have used Lemmas \ref{paraproductlemma}, \ref{diffquotientlemma} and \ref{mollificationbesovlemma}. Then the integral $I_2$ can be estimated as follows 
\begin{align*}
I_2 &\leq 2 \int_{t_1}^{t_2} \lVert (u - u^\epsilon) \times (B - B^\epsilon) \rVert_{B^{-\alpha}_{3/2,\infty}} \lVert B^\epsilon \rVert_{B^\alpha_{3,1}} dt \\
&\lesssim \epsilon^{-2\alpha} \int_{t_1}^{t_2} \lVert u - u^\epsilon \rVert_{B^{\alpha+\theta}_{3,\infty}} \lVert B - B^\epsilon \rVert_{B^{-\alpha}_{3,\infty}} \lVert B \rVert_{B^{-\alpha}_{3,\infty}} dt \\
&\lesssim \epsilon^{\beta-3\alpha-\theta} \int_{t_1}^{t_2} \lVert u  \rVert_{B^{\beta}_{3,\infty}} \lVert B \rVert_{B^{-\alpha}_{3,\infty}}^2 dt \xrightarrow[]{\epsilon \rightarrow 0} 0. 
\end{align*}
In the above we have used the estimates from Lemmas \ref{paraproductlemma} and \ref{mollificationbesovlemma}. As both $I_1$ and $I_2$ go to zero as $\epsilon \rightarrow 0$, we conclude therefore that the functional weak solution conserves magnetic helicity. 
\end{proof}

\section{On the divergence-free property of the magnetic field} \label{divergencesection}
For classical solutions of the MHD equations, by taking the divergence of equation \eqref{MHD2}, one can observe that the divergence of the magnetic field satisfies the following equation
\begin{equation} \label{divtransportequation}
\partial_t \Div B + u \cdot \nabla \Div B = 0.
\end{equation}
Therefore, if classical solutions of the MHD equations have a magnetic field which is divergence-free at the initial time, the magnetic field will remain divergence-free. However, if we consider weak solutions and hence the vector field in the transport equation \eqref{divtransportequation} has low regularity, the uniqueness of weak solutions to equation \eqref{divtransportequation} is not guaranteed. Examples of such nonuniqueness results can be found in \cite{modena,modena2019}.

Therefore it is not guaranteed that $\Div B$ will remain zero in the context of weak solutions if it is initially so. To the best knowledge of the authors, this point has been overlooked in previous work concerning low-regularity weak solutions of the MHD equations. Moreover, the derivation of the conservation laws of the MHD equations relies on the assumption of the magnetic field being divergence-free. Therefore, any sufficient condition for these conservation laws to hold must be accompanied with a (weakly) divergence-free assumption on the magnetic field. 

As has been mentioned before, the divergence-free condition for the magnetic field implies the absence of magnetic monopoles in the solution and therefore is physically significant. In Corollary \ref{viscositylimitcorollary} we show that weak solutions of the ideal MHD equations which are weak-$*$ limits of Leray-Hopf solutions of the viscous MHD system satisfy the divergence-free condition for all time. This result could therefore be interpreted as a (partial) selection criterion for physically relevant weak solutions of the ideal MHD system.

We first introduce a new weak formulation of the MHD equations in which potentially $\Div B \neq 0$.
\begin{definition} \label{newweaksolMHDdef}
We call a pair $(u,B)$ a weak solution of the ideal MHD equations if $u, B, \Div B \in L^\infty ((0,T); L^2 (\mathbb{T}^3))$ and for all divergence-free $\psi_1, \psi_2 \in \mathcal{D} (\mathbb{T}^3 \times (0,T);\mathbb{R}^3)$ and $\varphi \in \mathcal{D} (\mathbb{T}^3 \times (0,T);\mathbb{R})$ the following equations hold
\begin{align}
&\int_0^T \int_{\mathbb{T}^3} \bigg[ u \partial_t \psi_1 + u \otimes u : \nabla \psi_1 - B \otimes B : \nabla \psi_1 - (\Div B ) B \psi_1 \bigg] dx dt = 0, \label{newweakformMHD1} \\
&\int_0^T \int_{\mathbb{T}^3} \bigg[ B \partial_t \psi_2 + u \otimes B : \nabla \psi_2 - B \otimes u : \nabla \psi_2 - u (\Div B) \psi_2 \bigg] dx dt = 0, \label{newweakformMHD2} \\
&\int_0^T \int_{\mathbb{T}^3} u \cdot \nabla \varphi dx dt = 0. \label{newweakformMHD3}
\end{align}
\end{definition}
\begin{remark} \label{divtransportremark}
For weak solutions of the ideal MHD equations in the sense of Definition \ref{newweaksolMHDdef}, one can derive the evolution equation for $\Div B$ by taking $\psi_2 = \nabla \Phi$ in equation \eqref{newweakformMHD2}, where $\Phi \in \mathcal{D} (\mathbb{T}^3 \times (0,T);\mathbb{R})$. This yields
\begin{equation} \label{divBweakformulation}
\int_0^T \int_{\mathbb{T}^3} \bigg[ (\Div B) \partial_t \Phi + u (\Div B) \cdot \nabla \Phi \bigg] dx dt = 0.
\end{equation}
We note that equation \eqref{divBweakformulation} is the weak formulation of equation \eqref{divtransportequation}.
\end{remark}
\begin{remark}
We also observe that different regularity assumptions in Definition \ref{newweaksolMHDdef} would also ensure that the terms $u \Div B$, $B \Div B$ in equations \eqref{newweakformMHD1} and \eqref{newweakformMHD2} are well-defined. In particular, let $u \in L^{p_1} ((0,T); B^{\alpha_1}_{p_2,\infty} (\mathbb{T}^3))$ and $B \in L^{q_1} ((0,T); B^{\alpha_2}_{q_2,\infty} (\mathbb{T}^3))$ with
\begin{equation*}
\alpha_2 > \frac{1}{2}, \quad q_1, q_2 \geq 2, \quad \alpha_1 + \alpha_2 > 1, \quad \frac{1}{p_1} + \frac{1}{q_1} = \frac{1}{p_2} + \frac{1}{q_2} = 1.
\end{equation*}
Under these regularity assumptions (as well as $u, B \in L^\infty ((0,T); L^2 (\mathbb{T}^3))$), equations \eqref{MHD1} and \eqref{MHD2} have the following weak formulation
\begin{align}
&\int_0^T \int_{\mathbb{T}^3} \bigg[ u \partial_t \psi_1 + u \otimes u : \nabla \psi_1 - B \otimes B : \nabla \psi_1 \bigg] dx dt - \int_0^T \langle (\Div B ) B , \psi_1 \rangle dt = 0, \\
&\int_0^T \int_{\mathbb{T}^3} \bigg[ B \partial_t \psi_2 + u \otimes B : \nabla \psi_2 - B \otimes u : \nabla \psi_2  \bigg] dx dt - \int_0^T \langle u \Div B , \psi_2 \rangle dt = 0,
\end{align}
where we recall that $\langle \cdot, \cdot \rangle$ are the spatial distributional duality brackets. Note that it follows from Lemma \ref{paraproductlemma} that in this case $(\Div B) B$ and $u \Div B$ are well-defined as distributions.
\end{remark}
As was noted before, it is conceivable that there exist weak solutions of the ideal MHD equations such that the magnetic field is divergence-free initially, but it does not remain divergence-free. This is because solutions to the transport equation \eqref{divtransportequation} need not to be unique for $u \in L^\infty ((0,T); L^2 (\mathbb{T}^3))$ (see for example \cite{huysmans}). Consequently, one can make the following elementary observation.
\begin{proposition}
Let $(u,B)$ be a weak solution of the ideal MHD equations in the sense of Definition \ref{newweaksolMHDdef}, such that $\Div B \lvert_{t = 0} = 0$ and $u \in L^1 ((0,T); W^{1,1} (\mathbb{T}^3))$. Then $\Div B = 0$ for all time $t > 0$.
\end{proposition}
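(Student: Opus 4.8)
The plan is to recognise $\rho := \Div B$ as a weak solution of a \emph{linear} transport equation driven by the divergence-free field $u$, and then to deduce $\rho \equiv 0$ from its vanishing initial datum via a DiPerna--Lions-type uniqueness argument. Indeed, Remark \ref{divtransportremark} already records that any weak solution in the sense of Definition \ref{newweaksolMHDdef} satisfies \eqref{divBweakformulation}, i.e.
\[
\int_0^T \int_{\mathbb{T}^3} \big[ \rho\, \partial_t \Phi + u\, \rho \cdot \nabla \Phi \big] \dxdt = 0 \quad \text{for all } \Phi \in \mathcal{D}(\mathbb{T}^3 \times (0,T);\mathbb{R}),
\]
which is the weak form of \eqref{divtransportequation}, with $\rho \in L^\infty((0,T); L^2(\mathbb{T}^3))$ and $\rho|_{t=0} = 0$. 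By \eqref{newweakformMHD3} the transporting velocity $u$ is weakly divergence-free, and by hypothesis $u \in L^1((0,T); W^{1,1}(\mathbb{T}^3))$ lies in the DiPerna--Lions regularity class, so the strategy is to establish the renormalisation property and then exploit $\Div u = 0$ to propagate the vanishing $L^2$ norm of $\rho$.

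First I would mollify in space. Writing $\rho^\epsilon = \rho * \phi_\epsilon$ and using $\Div u = 0$, one obtains
\[
\partial_t \rho^\epsilon + u \cdot \nabla \rho^\epsilon = r_\epsilon, \qquad r_\epsilon(x,t) = \int_{\mathbb{R}^3} \big[ u(y,t) - u(x,t) \big] \cdot \nabla \phi_\epsilon(x-y)\, \rho(y,t) \, dy,
\]
which is the classical commutator of the transport operator with mollification. Next I would renormalise: for a smooth, bounded, globally Lipschitz $\beta$ with $\beta(0)=0$, $\beta \ge 0$ and $\beta > 0$ away from the origin (a smoothed truncation of $s \mapsto s^2$), multiplying by $\beta'(\rho^\epsilon)$ gives $\partial_t \beta(\rho^\epsilon) + u \cdot \nabla \beta(\rho^\epsilon) = \beta'(\rho^\epsilon)\, r_\epsilon$. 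Integrating over $\mathbb{T}^3$ and using $\Div u = 0$ to annihilate the transport term, $\int_{\mathbb{T}^3} u \cdot \nabla \beta(\rho^\epsilon) \dx = 0$, leaves
\[
\frac{d}{dt} \int_{\mathbb{T}^3} \beta(\rho^\epsilon) \dx = \int_{\mathbb{T}^3} \beta'(\rho^\epsilon)\, r_\epsilon \dx.
\]
Granting that the right-hand side vanishes as $\epsilon \to 0$, integration in time together with $\rho^\epsilon|_{t=0} = \rho_0 * \phi_\epsilon \equiv 0$ forces $\int_{\mathbb{T}^3} \beta(\rho(\cdot,t)) \dx = 0$ for a.e. $t$, whence $\Div B \equiv 0$.

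The hard part will be the convergence of the commutator contribution $\int_{\mathbb{T}^3} \beta'(\rho^\epsilon)\, r_\epsilon \dx$, and here the exponents are genuinely borderline: the standard estimate $\lVert r_\epsilon \rVert_{L^s} \lesssim \lVert \nabla u \rVert_{L^p} \lVert \rho \rVert_{L^q}$ with $\tfrac1s = \tfrac1p + \tfrac1q$ only yields $L^1$ control of $r_\epsilon$ when $\tfrac1p + \tfrac1q \le 1$, which is not met by $u \in W^{1,1}$ and $\rho \in L^2$. The way I would handle this is through the DiPerna--Lions commutator lemma, exploiting that the difference quotients of $u$ converge in $L^1$ because $u \in W^{1,1}$ and that $\beta'$ is bounded; should the raw $L^2$ integrability of $\rho$ prove insufficient, I would additionally invoke the bound $u \in L^\infty((0,T); L^2(\mathbb{T}^3))$ from Definition \ref{newweaksolMHDdef}, or pass to the regular Lagrangian flow framework where uniqueness of the flow immediately yields uniqueness of the transported density. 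In any case the most economical route, since the statement is elementary, is to quote the DiPerna--Lions uniqueness theorem for the transport equation directly: as $\rho = \Div B$ solves \eqref{divBweakformulation} with a weakly divergence-free field $u \in L^1((0,T); W^{1,1}(\mathbb{T}^3))$ and with zero initial datum, uniqueness in this class gives $\Div B \equiv 0$ for all $t > 0$.
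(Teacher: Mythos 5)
Your concluding argument is, in substance, the paper's proof: identify $\Div B$ as a weak solution of the transport equation \eqref{divtransportequation} via Remark \ref{divtransportremark} and equation \eqref{divBweakformulation}, note that $u$ is weakly divergence-free and lies in $L^1((0,T);W^{1,1}(\mathbb{T}^3))$, and quote the DiPerna--Lions uniqueness theory \cite{diperna} to conclude that $\Div B$ must coincide with the zero solution. So your ``most economical route'' reproduces the paper's one-line proof exactly.

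The more important point is the obstruction you flagged in your renormalisation sketch: it is genuine, and it is resolved neither by your proposed patches nor by the paper's citation. The DiPerna--Lions renormalisation property (and hence uniqueness) for a field in $L^1_t W^{1,p}$ holds for densities in $L^\infty_t L^q$ under the duality condition $\frac{1}{p}+\frac{1}{q}\le 1$, which is precisely what makes the commutator $r_\epsilon$ converge in $L^1_{loc}$. Here $p=1$ and $q=2$, so $\frac{1}{p}+\frac{1}{q}=\frac{3}{2}>1$: for a $W^{1,1}$ field, the classical theory covers only \emph{bounded} densities, whereas $\Div B$ is merely in $L^\infty_t L^2_x$. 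Your two patches do not close this gap: (i) the extra bound $u\in L^\infty_t L^2_x$ never enters the commutator estimate --- what is needed is integrability of the product $\nabla u\,\Div B$, i.e.\ better integrability of $\nabla u$ or of $\Div B$; (ii) uniqueness of the regular Lagrangian flow yields uniqueness of \emph{Lagrangian} solutions, but to conclude you would still have to show that the given distributional solution $\Div B$ is Lagrangian (equivalently, renormalised), which is exactly the commutator problem again. Nor is this a removable technicality: the nonuniqueness results the paper itself cites \cite{modena,modena2019} construct, for divergence-free fields in $C_tW^{1,p}$, nonzero densities in $C_tL^q$ emanating from zero initial data once $\frac{1}{p}+\frac{1}{q}$ exceeds $1+\frac{1}{d-1}$ (later refined to $1+\frac{1}{d}$); in $d=3$ the pair $(p,q)=(1,2)$ sits exactly at the first threshold and beyond the second, so an argument using only the transport structure of $\Div B$ at this integrability cannot succeed in general. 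A complete proof would need either stronger hypotheses (e.g.\ $u\in L^1_tW^{1,2}$, or $\Div B\in L^\infty_{x,t}$) or genuinely additional structure of the MHD system. In short: your proposal coincides with the paper's proof, and your own commutator analysis correctly pinpoints a gap that the paper's proof shares.
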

\begin{proof}
As the transporting vector field $u$ in equation \eqref{divtransportequation} falls within the DiPerna-Lions class, therefore there exists a unique renormalised solution to equation \eqref{divtransportequation} \cite{diperna}. The unique solution therefore has to coincide with $\Div B$ and as $\Div B \equiv 0$ is a solution of the equation, it follows that if the magnetic field is divergence-free initially, it will remain divergence-free. 
\end{proof}

However, what we will prove in this section is that weak solutions of the ideal MHD equations that arise as vanishing viscosity and resistivity limits of Leray-Hopf weak solutions of the viscous and resistive MHD equations, have a divergence-free magnetic field provided it is initially divergence-free. We recall that the viscous and resistive MHD equations are given by
\begin{align}
&\partial_t u - \nu_1 \Delta u + u \cdot \nabla u - B \cdot \nabla B + \nabla p = 0, \label{viscousMHD1} \\ 
&\partial_t B - \nu_2 \Delta B + u \cdot \nabla B - B \cdot \nabla u = 0, \label{viscousMHD2} \\
&\nabla \cdot u = 0, \label{viscousMHD3}
\end{align}
where $\nu_1 > 0$ is the viscosity and $\nu_2 > 0$ is the resistivity, supplemented by the divergence-free initial data $u \lvert_{t = 0} = u_0$ and $B \lvert_{t = 0} = B_0$. Note that also here we have omitted the divergence-free condition for the magnetic field. The existence of Leray-Hopf weak solutions to the MHD equations under the imposed assumption that the magnetic field is divergence-free is well-known, see for example \cite{sermange,wu}. Like we did in the ideal MHD case, we first introduce a more general weak formulation for the viscous and resistive MHD equations \eqref{viscousMHD1}-\eqref{viscousMHD3} in which potentially $\Div B \neq 0$.
\begin{definition} \label{viscoussolutiondef}
A pair $(u,B)$ is called a Leray-Hopf weak solutions of the viscous and resistive MHD equations if $u, B, \Div B \in L^\infty ((0,T); L^2 (\mathbb{T}^3)) \cap L^2 ((0,T);H^1(\mathbb{T}^3))$ and the following equations are satisfied
\begin{align}
&\int_0^T \int_{\mathbb{T}^3} \bigg[ u \partial_t \psi_1 - \nu_1 \nabla u : \nabla \psi_1 + u \otimes u : \nabla \psi_1 - B \otimes B : \nabla \psi_1 - (\Div B ) B \psi_1 \bigg] dx dt = 0, \label{viscousmhdweakform1} \\
&\int_0^T \int_{\mathbb{T}^3} \bigg[ B \partial_t \psi_2 - \nu_2 \nabla B : \nabla \psi_2 + u \otimes B : \nabla \psi_2 - B \otimes u : \nabla \psi_2 - u (\Div B) \psi_2 \bigg] dx dt = 0, \label{viscousmhdweakform2} \\
&\int_0^T \int_{\mathbb{T}^3} u \cdot \nabla \varphi dx dt = 0, \label{viscousmhdweakform3}
\end{align}
for all divergence-free $\psi_1, \psi_2 \in \mathcal{D} (\mathbb{T}^3 \times (0,T); \mathbb{R}^3)$ and $\varphi \in \mathcal{D} (\mathbb{T}^3 \times (0,T);\mathbb{R})$. Moreover, we assume that the solution attains the initial data $u_0, B_0 \in L^2 (\mathbb{T}^3)$ strongly in $L^2 (\mathbb{T}^3)$, where we also assume that $u_0$ is weakly divergence-free.
\end{definition}
We now prove the global existence of Leray-Hopf weak solutions for general initial vector fields $B_0 \in L^2 (\mathbb{T}^3)$, which are not necessarily divergence-free. Note that we always assume that $u_0$ is weakly divergence-free.
\begin{theorem}
Let $u_0, B_0 \in L^2 (\mathbb{T}^3)$ such that $\Div B_0 \in L^2 (\mathbb{T}^3)$ and $\Div u_0 = 0$. Then there exists a Leray-Hopf weak solution (in the sense of Definition \ref{viscoussolutiondef}) on the time interval $[0,T]$ to the viscous and resistive MHD equations \eqref{viscousMHD1}-\eqref{viscousMHD3}. 
\end{theorem}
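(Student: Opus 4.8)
The plan is to construct the solution by a Fourier--Galerkin scheme and to recover the weak formulation by a compactness argument, the only genuinely new ingredient compared with the classical existence theory (cf.\ \cite{sermange,wu}) being the treatment of the terms involving $\Div B$. I would approximate $u$ by the truncation $u_n = P_n u$ onto the divergence-free trigonometric polynomials of degree at most $n$ (so that the pressure is automatically eliminated by the projection), and $B$ by the truncation $B_n = P_n B$ onto \emph{all} trigonometric polynomials of degree at most $n$; crucially, since $B_0$ is not assumed solenoidal we must not restrict $B_n$ to divergence-free modes. Substituting these into the weak formulations \eqref{viscousmhdweakform1}--\eqref{viscousmhdweakform2} yields a finite system of ODEs with locally Lipschitz right-hand side, so the Cauchy--Lipschitz theorem gives a unique local solution, which extends to all of $[0,T]$ once the uniform a priori bounds below are in place.

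The a priori estimates are of two kinds. First, because $P_n$ is a Fourier truncation it commutes with $\Div$, and a direct computation using $\Div u_n = 0$ together with the index symmetry $(\partial_i u_j)(\partial_j B_i) = (\partial_i B_j)(\partial_j u_i)$ shows that the scalar $\Div B_n$ solves the advection--diffusion equation $\partial_t \Div B_n = P_n\big[\nu_2 \Delta \Div B_n - u_n \cdot \nabla \Div B_n\big]$. Testing this against $\Div B_n$ (which lies in the range of $P_n$) makes the transport term drop by incompressibility, giving $\tfrac12 \tfrac{d}{dt}\|\Div B_n\|_{L^2}^2 + \nu_2 \|\nabla \Div B_n\|_{L^2}^2 = 0$, so that
\begin{equation*}
\|\Div B_n(t)\|_{L^2} \le \|P_n \Div B_0\|_{L^2} \le \|\Div B_0\|_{L^2} =: M
\end{equation*}
uniformly in $n$ and $t$, with $\Div B_n$ also bounded in $L^2((0,T);H^1(\mathbb{T}^3))$. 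Second, testing the momentum and induction equations against $u_n$ and $B_n$ respectively, the self-advection terms vanish but the magnetic stretching and tension terms no longer cancel; after one integration by parts they combine into a single indefinite term, producing the energy identity
\begin{equation*}
\tfrac12 \tfrac{d}{dt}\big(\|u_n\|_{L^2}^2 + \|B_n\|_{L^2}^2\big) + \nu_1 \|\nabla u_n\|_{L^2}^2 + \nu_2 \|\nabla B_n\|_{L^2}^2 = -\int_{\mathbb{T}^3} (\Div B_n)(B_n \cdot u_n) \dx.
\end{equation*}

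The main obstacle is precisely this cross term, which is identically zero in the classical divergence-free theory but here must be controlled. I would bound it using the uniform $L^\infty((0,T);L^2)$ bound $M$ on $\Div B_n$ together with the three-dimensional Gagliardo--Nirenberg inequality $\|f\|_{L^4}^2 \lesssim \|f\|_{L^2}^{1/2}\|\nabla f\|_{L^2}^{3/2} + \|f\|_{L^2}^2$:
\begin{equation*}
\Big| \int_{\mathbb{T}^3} (\Div B_n)(B_n \cdot u_n) \dx \Big| \le M\,\|B_n\|_{L^4}\|u_n\|_{L^4} \le \tfrac{\nu_1}{2}\|\nabla u_n\|_{L^2}^2 + \tfrac{\nu_2}{2}\|\nabla B_n\|_{L^2}^2 + C(M,\nu_1,\nu_2)\big(\|u_n\|_{L^2}^2 + \|B_n\|_{L^2}^2\big),
\end{equation*}
where the combined gradient factor has total homogeneity $\tfrac32 < 2$ and is therefore absorbed into the dissipation by Young's inequality. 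Grönwall's inequality then yields uniform bounds on $u_n, B_n$ in $L^\infty((0,T);L^2(\mathbb{T}^3)) \cap L^2((0,T);H^1(\mathbb{T}^3))$.

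With these bounds the passage to the limit is routine. Reading off the time derivatives from the Galerkin equations, and using the Ladyzhenskaya interpolation $L^2(H^1) \cap L^\infty(L^2) \hookrightarrow L^{10/3}(L^{10/3})$ to control the quadratic terms (including the new $(\Div B_n)B_n$ and $(\Div B_n)u_n$), one bounds $\partial_t u_n$, $\partial_t B_n$, and $\partial_t \Div B_n$ in a negative-regularity space such as $L^{5/3}((0,T);W^{-1,5/3}(\mathbb{T}^3))$. The Aubin--Lions lemma (see \cite{robinsonbook,boyer}) then provides a subsequence along which $u_n$, $B_n$ and $\Div B_n$ converge strongly in $L^2(\mathbb{T}^3 \times (0,T))$, weakly in $L^2((0,T);H^1)$, and weak-$*$ in $L^\infty((0,T);L^2)$. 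This strong $L^2$ convergence lets one pass to the limit in every quadratic nonlinearity: for the new terms, both factors converge strongly in $L^2(\mathbb{T}^3\times(0,T))$, so their product converges in $L^1$ by Cauchy--Schwarz, which suffices when tested against $\psi_1, \psi_2 \in \mathcal{D}(\mathbb{T}^3\times(0,T);\mathbb{R}^3)$. The limit $(u,B)$ thus satisfies \eqref{viscousmhdweakform1}--\eqref{viscousmhdweakform3} with the regularity $u, B, \Div B \in L^\infty((0,T);L^2) \cap L^2((0,T);H^1)$ demanded by Definition \ref{viscoussolutiondef}, and attains the initial data strongly in $L^2$ by the standard argument combining weak continuity in time with the energy inequality obtained from weak lower semicontinuity of the norms.
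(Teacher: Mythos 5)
Your proposal is correct and follows the same overall route as the paper: a Fourier--Galerkin scheme with divergence-free truncation for $u_n$ and unconstrained truncation for $B_n$, a decoupled parabolic energy estimate giving $\|\Div B_n(t)\|_{L^2}\le\|\Div B_0\|_{L^2}$ uniformly, an energy identity whose only new obstruction is the cross term $\int_{\mathbb{T}^3}(\Div B_n)(u_n\cdot B_n)\,dx$, Gr\"onwall, and Aubin--Lions compactness. Where you genuinely deviate is in the treatment of that cross term: the paper splits it as $\|u_n\|_{L^3}\|B_n\|_{L^3}\|\Div B_n\|_{L^6}$, so its Gr\"onwall coefficient involves $\|\Div B_n\|_{L^6}^2$ and is only integrable in time thanks to the $L^2_t H^1_x$ dissipation of $\Div B_n$; you instead split it as $\|\Div B_n\|_{L^2}\|u_n\|_{L^4}\|B_n\|_{L^4}$, use only the uniform $L^\infty_t L^2_x$ bound $M$ on $\Div B_n$, and absorb the gradients via Gagliardo--Nirenberg and Young, exploiting that the total gradient homogeneity $3/2$ is subcritical. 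This buys a constant (time-independent) Gr\"onwall coefficient and does not require the parabolic smoothing of $\Div B_n$ at this stage, which is arguably cleaner; the paper's version keeps the dependence on $\nu_2$ more explicit through the $\Div B_n$ dissipation. One point needs repair: bounding the time derivatives in $L^{5/3}((0,T);W^{-1,5/3}(\mathbb{T}^3))$ is problematic, because the Galerkin equations place the spherical Fourier truncation $P_n$ in front of every nonlinear term, and $P_n$ is \emph{not} uniformly bounded on $L^p$-based spaces for $p\neq 2$ in dimension three (Fefferman's ball-multiplier theorem, transferred to the torus). You should instead state these bounds in an $L^2$-based negative Sobolev space, where $P_n$ is an orthogonal projection of norm one: either the paper's $L^{4/3}((0,T);H^{-1}(\mathbb{T}^3))$, or, keeping your Ladyzhenskaya interpolation, $L^{5/3}((0,T);H^{-s}(\mathbb{T}^3))$ for $s$ large enough that $L^{5/3}(\mathbb{T}^3)\hookrightarrow H^{-s+1}(\mathbb{T}^3)$. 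With that substitution your compactness and limit-passage arguments go through exactly as written.
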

\begin{proof}
This result can be established by using the Galerkin method. We will only provide a brief sketch of the proof, as the method is standard. In particular, we will only highlight the aspects which differ from the usual proof for the MHD equations under the imposed divergence-free condition for the magnetic field (see for example \cite[~Theorem 2.2]{wu}). We define $P_n$ to be the $L^2$ orthogonal projection onto the first $n$ Fourier modes. Note that we include the zero mode as part of the projection as the mean is not preserved by the equations. We find that there exists a sequence $\{ (u_n, B_n, p_n) \}_{n=1}^\infty$ satisfying the Galerkin approximation of the equations
\begin{align}
&\partial_t u_n - \nu_1 \Delta u_n + P_n \big[ u_n \cdot \nabla u_n \big] + \nabla p_n - P_n \big[ B_n \cdot \nabla B_n \big] = 0, \label{galerkin1} \\
&\partial_t B_n - \nu_2 \Delta B_n + P_n \big[ u_n \cdot \nabla B_n \big] - P_n \big[ B_n \cdot \nabla u_n \big] = 0, \label{galerkin2} \\
&\nabla \cdot u_n = 0. \label{galerkin3}
\end{align}
We note that in fact $u_n$ lies in a subset of the space spanned by the first $n$ Fourier modes. This subset is spanned by eigenfunctions of the Stokes operator (as $u_n$ is divergence-free). Next we define $u_{0,n} \coloneqq P_n u_0$ and $B_{0,n} \coloneqq P_n B_0$, which are the initial data.  We observe that system \eqref{galerkin1}-\eqref{galerkin3} is a system of ODEs with quadratic nonlinearities, and hence one deduces the local existence and uniqueness of a solution.

By taking the divergence of equation \eqref{galerkin2} and using equation \eqref{galerkin3}, we find that (by recalling that $P_n$ commutes with derivatives)
\begin{equation*}
\partial_t \Div B_n - \nu_2 \Delta \Div B_n + P_n \big[ u_n \cdot \nabla \Div B_n \big] = 0.
\end{equation*}
Then by performing standard energy estimates, we find the following identities 
\begin{align}
&\frac{1}{2} \frac{d}{dt} \lVert u_n (\cdot, t) \rVert_{L^2}^2 + \frac{1}{2} \frac{d}{d t} \lVert B_n (\cdot, t) \rVert_{L^2}^2 + \nu_1 \lVert \nabla u_n (\cdot, t) \rVert_{L^2}^2 + \nu_2 \lVert \nabla B_n (\cdot, t) \rVert_{L^2}^2 = - \int_{\mathbb{T}^3} (u_n \cdot B_n) \Div B_n  dx, \label{Lerayhopfestimate} \\
&\frac{1}{2} \frac{d}{d t} \lVert \Div B_n (\cdot, t) \rVert_{L^2}^2 + \nu_2 \lVert \nabla \Div B_n \rVert_{L^2}^2 = 0. \label{Lerayhopfestimate2}
\end{align}
From \eqref{Lerayhopfestimate2} we obtain (for all $t \in (0,T)$)
\begin{equation} \label{divBenergyestimate}
\lVert \Div B_n ( \cdot, t) \rVert_{L^2}^2 + 2 \nu_2 \int_0^t \lVert \nabla \Div B_n (\cdot, t') \rVert_{L^2}^2 dt' \leq \lVert \Div B_{0,n} \rVert_{L^2}^2 \leq \lVert \Div B_0 \rVert_{L^2}^2.
\end{equation}
We recall the following interpolation inequality
\begin{equation} \label{interpolationinequality}
\lVert f \rVert_{L^4 (L^3)} \leq \lVert f \rVert_{L^\infty (L^2)}^{1/2} \lVert f \rVert_{L^2 (L^6)}^{1/2}.
\end{equation}
Next we derive the following bound (by using the Sobolev embedding theorem)
\begin{align*}
&\int_{\mathbb{T}^3} \left\lvert (u_n \cdot B_n) \Div B_n \right\rvert  dx \leq \lVert u_n \rVert_{L^3} \lVert B_n \rVert_{L^3} \lVert \Div B_n \rVert_{L^6} \\
&\leq \lVert u_n \rVert_{L^2}^{1/2} \lVert u_n \rVert_{L^6}^{1/2} \lVert B_n \rVert_{L^2}^{1/2} \lVert B_n \rVert_{L^6}^{1/2} \lVert \Div B_n \rVert_{L^6} \\
&\leq \frac{\nu_1}{4} \lVert \nabla u_n \rVert_{L^2}^2 + \frac{\nu_2}{4} \lVert \nabla B_n \rVert_{L^2}^2 + \frac{1}{4} \bigg[ \max \bigg\{ \frac{1}{\nu_1}, \frac{1}{\nu_2} \bigg\} \lVert \Div B_n \rVert_{L^6}^2 + \max \{ \nu_1, \nu_2 \} \bigg] \bigg( \lVert u_n \rVert_{L^2}^2 + \lVert B_n \rVert_{L^2}^2 \bigg).
\end{align*}
Combining this bound with \eqref{Lerayhopfestimate} yields (for all $t \in (0,T)$)
\begin{align}
&\frac{d}{dt} \bigg( \lVert u_n (\cdot, t) \rVert_{L^2}^2 + \lVert B_n (\cdot, t) \rVert_{L^2}^2 \bigg) + \frac{3}{2} \nu_1 \lVert \nabla u_n (\cdot, t) \rVert_{L^2}^2 + \frac{3}{2} \nu_2 \lVert \nabla B_n (\cdot, t) \rVert_{L^2}^2 \nonumber \\
&\leq \frac{1}{2} \bigg[ \max \bigg\{ \frac{1}{\nu_1}, \frac{1}{\nu_2} \bigg\} \lVert \Div B_n \rVert_{L^6}^2 + \max \{ \nu_1, \nu_2 \} \bigg] \bigg( \lVert u_n \rVert_{L^2}^2 + \lVert B_n \rVert_{L^2}^2 \bigg). \label{L2estimate}
\end{align}
Thus by using Gronwall's inequality, we find that (by using Poincaré's inequality)
\begin{align*}
&\lVert u_n (\cdot, t) \rVert_{L^2}^2 + \lVert B_n (\cdot, t) \rVert_{L^2}^2 \\ 
&\leq \exp \bigg( \frac{1}{2} \max \bigg\{ \frac{1}{\nu_1}, \frac{1}{\nu_2} \bigg\} \int_0^t \lVert \nabla \Div B_n (\cdot, t') \rVert_{L^2}^2 dt' + \frac{1}{2} \max \{ \nu_1, \nu_2 \} t \bigg) \bigg[ \lVert u_{n,0} \rVert_{L^2}^2 + \lVert B_{n,0} \rVert_{L^2}^2 \bigg] \\
&\leq \exp \bigg( \frac{1}{2 \nu_2} \max \bigg\{ \frac{1}{\nu_1}, \frac{1}{\nu_2} \bigg\} \lVert \Div B_{0} \rVert_{L^2}^2 + \frac{1}{2} \max \{ \nu_1, \nu_2 \} T \bigg) \bigg[ \lVert u_{0} \rVert_{L^2}^2 + \lVert B_{0} \rVert_{L^2}^2 \bigg],
\end{align*}
where we have again used the Sobolev inequality $\lVert f \rVert_{L^6} \leq C \lVert f \rVert_{H^1}$ as well as estimate \eqref{divBenergyestimate}. As a result, we find that combining this estimate with \eqref{L2estimate} yields that
\begin{align*}
&\frac{d}{dt} \bigg( \lVert u_n (\cdot, t) \rVert_{L^2}^2 + \lVert B_n (\cdot, t) \rVert_{L^2}^2 \bigg) + \frac{3}{2} \nu_1 \lVert \nabla u_n (\cdot, t) \rVert_{L^2}^2 + \frac{3}{2} \nu_2 \lVert \nabla B_n (\cdot, t) \rVert_{L^2}^2 \nonumber \\
&\leq \frac{1}{2} \bigg[ \max \bigg\{ \frac{1}{\nu_1}, \frac{1}{\nu_2} \bigg\} \lVert \Div B_n \rVert_{L^6}^2 + \max \{ \nu_1, \nu_2 \} \bigg] \\
&\cdot \exp \bigg( \frac{1}{2 \nu_2} \max \bigg\{ \frac{1}{\nu_1}, \frac{1}{\nu_2} \bigg\} \lVert \Div B_{0} \rVert_{L^2}^2 + \frac{1}{2} \max \{ \nu_1, \nu_2 \} T \bigg) \bigg[ \lVert u_{0} \rVert_{L^2}^2 + \lVert B_{0} \rVert_{L^2}^2 \bigg].
\end{align*}
Therefore after integrating in time, we find that
\begin{align*}
&\lVert u_n (\cdot, t) \rVert_{L^2}^2 + \lVert B_n (\cdot, t) \rVert_{L^2}^2 + \frac{3}{2} \int_0^t \bigg[ \nu_1 \lVert \nabla u_n (\cdot, t') \rVert_{L^2}^2 + \nu_2 \lVert \nabla B_n (\cdot, t') \rVert_{L^2}^2 \bigg] dt' \leq \lVert u_0 \rVert_{L^2}^2 + \lVert B_0 \rVert_{L^2}^2 \\
&+\frac{1}{2} \bigg[ \max \bigg\{ \frac{1}{\nu_1}, \frac{1}{\nu_2} \bigg\} \int_0^t \lVert \nabla \Div B_n (\cdot, t') \rVert_{L^2}^2 dt' + \max \{ \nu_1, \nu_2 \} t \bigg] \\
&\times \exp \bigg( \frac{1}{2 \nu_2} \max \bigg\{ \frac{1}{\nu_1}, \frac{1}{\nu_2} \bigg\} \lVert \Div B_{0} \rVert_{L^2}^2 + \frac{1}{2} \max \{ \nu_1, \nu_2 \} T \bigg) \bigg[ \lVert u_{0} \rVert_{L^2}^2 + \lVert B_{0} \rVert_{L^2}^2 \bigg].
\end{align*}
Then again by using estimate \eqref{divBenergyestimate}, we find (for all $t \in (0,T)$)
\begin{align}
&\big( \lVert u_n (\cdot, t) \rVert_{L^2}^2 + \lVert B_n (\cdot, t) \rVert_{L^2}^2 \big) + \frac{3}{2} \int_0^t \bigg[ \nu_1 \lVert \nabla u_n (\cdot, t') \rVert_{L^2}^2 + \nu_2 \lVert \nabla B_n (\cdot, t') \rVert_{L^2}^2 \bigg] dt' \nonumber \\
&\leq\bigg[ \bigg(\frac{1}{2 \nu_2} \max \bigg\{ \frac{1}{\nu_1}, \frac{1}{\nu_2} \bigg\} \lVert \Div B_{0} \rVert_{L^2}^2 + \frac{1}{2} \max \{ \nu_1, \nu_2 \} t \bigg) \nonumber \\
&\times \exp \bigg( \frac{1}{2 \nu_2} \max \bigg\{ \frac{1}{\nu_1}, \frac{1}{\nu_2} \bigg\} \lVert \Div B_{0} \rVert_{L^2}^2 + \frac{1}{2} \max \{ \nu_1, \nu_2 \} T \bigg) + 1\bigg] \bigg[ \lVert u_{0} \rVert_{L^2}^2 + \lVert B_{0} \rVert_{L^2}^2 \bigg]. \label{H1estimate}
\end{align}
From this estimate we then conclude that the Galerkin system \eqref{galerkin1}-\eqref{galerkin3} in fact has a unique solution on any finite time interval $[0,T]$. 

Now we proceed to construct a solution of the original system. By estimate \eqref{H1estimate} we conclude that there exist subsequences $\{ u_n \}$ and $\{ B_n \}$ which converge weak-$*$ to limits $u$ and $B$ in $L^\infty ((0,T); L^2 (\mathbb{T}^3))$ and weakly to the same limits in $L^2 ((0,T); H^1 (\mathbb{T}^3))$. Moreover, by using estimate \eqref{divBenergyestimate}, we conclude that there exists a subsequence $\{ \Div B_n \}$ converging weakly to $\Div B$ in $L^2 ((0,T); H^1 (\mathbb{T}^3))$ and weak-$*$ in $L^\infty ((0,T); L^2 (\mathbb{T}^3))$. We first observe that the weak(-$*$) limit $\Div B$ coincides with the weak divergence of the weak(-$*$) limit $B$. This can be seen by noticing that for any $n \in \mathbb{N}$ we have that
\begin{equation*}
\int_0^T \int_{\mathbb{T}^3} \Div B_n \Phi dx dt = - \int_0^T \int_{\mathbb{T}^3} B_n \cdot \nabla \Phi dx dt,
\end{equation*}
for all $\Phi \in \mathcal{D} (\mathbb{T}^3 \times (0,T))$. Then by passing to the weak limits in the integrals as $n \rightarrow \infty$, we conclude that $\Div B$ is the weak divergence of $B$.

Next we will use the Aubin-Lions lemma to obtain strong convergence. To this end, we will show that $\partial_t u_n , \partial_t B_n , \partial_t \Div B_n \in L^{4/3} ((0,T); H^{-1} (\mathbb{T}^3))$. We will only estimate the terms which are different compared to the standard proof for the MHD equations, which can be found for example in \cite[~Theorem 2.2]{wu}. We have that
\begin{align*}
\lVert B_n \cdot \nabla B_n \rVert_{H^{-1}} &\leq \lVert B_n \cdot \nabla B_n \rVert_{L^{6/5}} \leq \lVert B_n \rVert_{L^3} \lVert \nabla B_n \rVert_{L^2}.
\end{align*}
Hence we obtain by using Hölder's inequality and inequality \eqref{interpolationinequality}
\begin{align*}
\lVert B_n \cdot \nabla B_n \rVert_{L^{4/3} (H^{-1})} &\leq \bigg( \int_0^T \lVert B_n \rVert_{L^3}^{4/3} \lVert \nabla B_n \rVert_{L^2}^{4/3} dt \bigg)^{3/4} \leq \bigg( \int_0^T \lVert \nabla B_n \rVert_{L^2}^2 dt \bigg)^{1/2} \bigg( \int_0^T \lVert B_n \rVert_{L^3}^4 dt \bigg)^{1/4} \\
&\leq \lVert B_n \rVert_{L^2 (H^1)}^{3/2} \lVert B_n \rVert_{L^\infty (L^2)}^{1/2} \leq \lVert B_0 \rVert_{L^2}^2.
\end{align*}
Similarly, one can show that $B_n \cdot \nabla u_n, u_n \cdot \nabla \Div B_n \in L^{4/3} ((0,T); H^{-1} (\mathbb{T}^3))$. Therefore we conclude that $\partial_t u_n, \partial_t B_n, \partial_t \Div B_n \in L^{4/3} ((0,T); H^{-1} (\mathbb{T}^3))$. Therefore by applying the Aubin-Lions lemma, we obtain (potentially by passing to a different subsequence) that $u_n \rightarrow u$, $B_n \rightarrow B$ and $\Div B_n \rightarrow \Div B$ in $L^2 (\mathbb{T}^3 \times (0,T))$.

We can then conclude that $(u, B, \Div B)$ is a Leray-Hopf weak solution in the sense of Definition \ref{viscoussolutiondef} by passing to the limit for all terms in the weak formulation in equations \eqref{viscousmhdweakform1}-\eqref{viscousmhdweakform3}, where we use the strong convergence in $L^2 (\mathbb{T}^3 \times (0,T))$ to conclude that the nonlinear terms converge. 
\end{proof}
Next we show that the magnetic field of Leray-Hopf weak solutions with divergence-free initial data, will remain divergence-free.
\begin{theorem} \label{lerayhopfdivergence}
Any Leray-Hopf weak solution (in the sense of Definition \ref{viscoussolutiondef}) with the property that $\Div B_0 = 0$ satisfies $\Div B \equiv 0$.
\end{theorem}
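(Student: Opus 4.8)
The plan is to reduce the statement to a uniqueness result for the scalar advection--diffusion equation satisfied by $\Div B$. Taking the distributional divergence of the magnetic field equation \eqref{viscousMHD2} and using $\nabla \cdot u = 0$, the two mixed second-order terms $\Div[(u \cdot \nabla) B]$ and $\Div[(B \cdot \nabla) u]$ cancel (they are equal after relabelling the summation indices), so that $w \coloneqq \Div B$ formally solves
\begin{equation*}
\partial_t w - \nu_2 \Delta w + u \cdot \nabla w = 0, \qquad w \lvert_{t=0} = \Div B_0 = 0 .
\end{equation*}
At the level of the weak formulation this is obtained by inserting $\psi_2 = \nabla \Phi$ into \eqref{viscousmhdweakform2}: since the induction equation carries no pressure (Lagrange multiplier), its weak formulation extends from divergence-free test fields to arbitrary ones, exactly as exploited in Remark \ref{divtransportremark}, yielding
\begin{equation*}
\int_0^T \int_{\mathbb{T}^3} \big[ w\, \partial_t \Phi - \nu_2 \nabla w \cdot \nabla \Phi + w\, (u \cdot \nabla \Phi) \big] \, dx \, dt = 0
\end{equation*}
for all $\Phi \in \mathcal{D}(\mathbb{T}^3 \times (0,T); \mathbb{R})$, where I have used $\nabla \cdot u = 0$ to write $u \cdot \nabla w = \Div(w u)$. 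By Definition \ref{viscoussolutiondef} one has $w \in L^\infty((0,T); L^2(\mathbb{T}^3)) \cap L^2((0,T); H^1(\mathbb{T}^3))$, and the same computation as in the existence proof gives $\partial_t w \in L^{4/3}((0,T); H^{-1}(\mathbb{T}^3))$.

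Next I would establish the energy identity for $w$. The natural step is to test the equation against $w$ itself: the advection term then vanishes, because for almost every $t$ one has $\int_{\mathbb{T}^3} (u \cdot \nabla w)\, w \, dx = \tfrac12 \int_{\mathbb{T}^3} u \cdot \nabla (w^2) \, dx = -\tfrac12 \int_{\mathbb{T}^3} (\Div u)\, w^2 \, dx = 0$, while the diffusion term supplies the favourable sign $-\nu_2 \lVert \nabla w \rVert_{L^2}^2$. This would produce
\begin{equation*}
\tfrac12 \lVert w(\cdot,t) \rVert_{L^2}^2 + \nu_2 \int_0^t \lVert \nabla w \rVert_{L^2}^2 \, ds \leq \tfrac12 \lVert \Div B_0 \rVert_{L^2}^2 = 0 ,
\end{equation*}
which forces $w \equiv 0$, i.e. $\Div B \equiv 0$, as claimed.

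The main obstacle is that $w$ is not itself an admissible test function: one only controls $\partial_t w \in L^{4/3}((0,T); H^{-1})$ rather than the conjugate exponent $L^2((0,T); H^{-1})$, so the Lions--Magenes integration-by-parts formula $\frac{d}{dt} \lVert w \rVert_{L^2}^2 = 2 \langle \partial_t w, w \rangle$ cannot be quoted verbatim. I would circumvent this by a Friedrichs / DiPerna--Lions regularisation: mollifying in space, $w^\delta = w * \phi_\delta$ satisfies $\partial_t w^\delta - \nu_2 \Delta w^\delta + u \cdot \nabla w^\delta = -r_\delta$ with commutator $r_\delta = (u \cdot \nabla w) * \phi_\delta - u \cdot \nabla w^\delta$. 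For fixed $\delta$ the field $w^\delta$ is admissible, so the exact energy identity holds and, using $\int_{\mathbb{T}^3} (u \cdot \nabla w^\delta) w^\delta \, dx = 0$ together with $w^\delta \lvert_{t=0} = (\Div B_0) * \phi_\delta = 0$, everything reduces to controlling $\int_0^t \int_{\mathbb{T}^3} r_\delta\, w^\delta \, dx\, ds$. The crucial input is $u \in L^2((0,T); H^1)$, which by the commutator lemma forces $r_\delta \to 0$; combined with $w \in L^6$ (from the embedding $H^1 \hookrightarrow L^6$) this lets me pass to the limit $\delta \to 0$ and recover the energy identity above. Alternatively, since $u \in L^2((0,T); H^1(\mathbb{T}^3)) \subset L^1((0,T); W^{1,1}(\mathbb{T}^3))$, I could simply invoke uniqueness of weak solutions of the linear advection--diffusion equation in the DiPerna--Lions class, exactly as in the preceding proposition, the strictly positive resistivity $\nu_2 > 0$ only rendering the uniqueness easier. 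Either route yields $\Div B \equiv 0$.
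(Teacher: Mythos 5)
Your reduction of the statement to a uniqueness question for the advection--diffusion equation satisfied by $w=\Div B$ (inserting $\psi_2=\nabla\Phi$ into \eqref{viscousmhdweakform2}) is exactly the paper's first step. The difference lies in how uniqueness is then obtained, and this is where your proposal has a genuine gap. The paper never attempts the energy identity: it concludes by citing Theorem 2.7 of \cite{bonicatto2023}, which asserts uniqueness of \emph{parabolic} solutions (solutions lying in $L^\infty((0,T);L^2)\cap L^2((0,T);H^1)$, as $w$ does here) for divergence-free drifts that are merely $L^2$ in space--time, and it stresses that the $L^2((0,T);H^1)$ regularity of $\Div B$ is what makes that theorem applicable. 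Your Route A cannot be completed as described: the limit $\int_0^t\!\int r_\delta\,w^\delta\,dx\,ds\to 0$ is not justified by the available integrability. Writing $L^p_tL^q_x$ for $L^p((0,T);L^q(\mathbb{T}^3))$, the function $w$ (hence $w^\delta$, uniformly in $\delta$) lies in $L^{p}_tL^{q}_x$ only for exponents with $\tfrac2p+\tfrac3q\ge\tfrac32$, while the commutator estimate with $\nabla u\in L^2_{t,x}$ and $w\in L^{p_1}_tL^{q_1}_x$ gives $r_\delta\to0$ in $L^{s}_tL^{r}_x$ with $\tfrac1s=\tfrac12+\tfrac1{p_1}$, $\tfrac1r=\tfrac12+\tfrac1{q_1}$. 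H\"older for the pairing then demands $\tfrac1{p_1}+\tfrac1{p_2}\le\tfrac12$ and $\tfrac1{q_1}+\tfrac1{q_2}\le\tfrac12$, i.e. $\bigl(\tfrac2{p_1}+\tfrac3{q_1}\bigr)+\bigl(\tfrac2{p_2}+\tfrac3{q_2}\bigr)\le\tfrac52$, contradicting the requirement that each bracket be at least $\tfrac32$. Equivalently: the trilinear quantity $\int_0^T\!\int|u|\,|\nabla w|\,|w|\,dx\,dt$ need not be finite when $u$ and $w$ are only in the three-dimensional energy class --- the same obstruction that blocks the energy equality for 3D Navier--Stokes --- so ``testing with $w$'' cannot be rescued by mollification; your phrase ``combined with $w\in L^6$'' overlooks that $w$ is $L^6$ in space only with $L^2$ integrability in time.

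Route B does not close this gap either: the preceding proposition and \cite{diperna} concern the pure transport equation, and the claim that $\nu_2>0$ ``only renders the uniqueness easier'' is precisely the assertion that has to be proven; it is true, but not by quoting a transport theorem. The repair is small but essential: renormalise instead of testing with $w^\delta$. Multiply the mollified equation by $\beta'(w^\delta)$ with $\beta$ convex, nonnegative, bounded with bounded derivative and vanishing only at $0$ (e.g. $\beta(s)=\sqrt{1+s^2}-1$). Since $\beta'$ is bounded, the commutator term is handled by $r_\delta\to0$ in $L^1_{t,x}$ alone, and this convergence holds because \emph{both} $(u\cdot\nabla w)*\phi_\delta$ and $u\cdot\nabla w^\delta$ converge to $u\cdot\nabla w$ in $L^1_{t,x}$ using only $u\in L^2_{t,x}$ and $\nabla w\in L^2_{t,x}$ --- no derivatives of $u$ are needed, which is exactly the mechanism behind the theorem the paper cites. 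The diffusion term contributes $\nu_2\Delta\beta(w^\delta)-\nu_2\beta''(w^\delta)|\nabla w^\delta|^2$, whose second part has a favourable sign by convexity; integrating over $\mathbb{T}^3$ and in time and letting $\delta\to0$ gives $\int_{\mathbb{T}^3}\beta(w(\cdot,t))\,dx\le\int_{\mathbb{T}^3}\beta(\Div B_0)\,dx=0$, hence $w\equiv0$. With this substitution (bounded renormalisation in place of the energy identity) your strategy becomes a correct, self-contained alternative to the paper's citation of \cite{bonicatto2023}.
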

\begin{proof}
We first observe that for a Leray-Hopf weak solution (in the sense of Definition \ref{viscoussolutiondef}) $\Div B$ satisfies the following advection-diffusion equation (in a weak sense)
\begin{equation} \label{advectiondiffdivB}
\partial_t \Div B - \nu_2 \Delta \Div B + u \cdot \nabla \Div B = 0.
\end{equation}
This can be seen by proceeding as in Remark \ref{divtransportremark}, i.e. by taking a test function $\nabla \Phi$ (for $\Phi \in \mathcal{D} (\mathbb{T}^3 \times (0,T); \mathbb{R})$) in equation \eqref{viscousmhdweakform2} and integrating by parts. This gives the following equation
\begin{equation*}
\int_0^T \int_{\mathbb{T}^3} \bigg[ \Div B \partial_t \Phi - \nu_2 \nabla \Div B \cdot \nabla \Phi + (u \Div B) \cdot \nabla \Phi \bigg] dx dt = 0,
\end{equation*}
which is indeed a weak formulation of the advection-diffusion equation. Note that the manipulations are justified because $\Div B \in L^\infty ((0,T);L^2 (\mathbb{T}^3)) \cap L^2 ((0,T);H^1(\mathbb{T}^3))$. 

In the terminology of \cite{bonicatto2023}, $\Div B \equiv 0$ is a parabolic solution (i.e. it belongs $L^\infty ((0,T);L^2 (\mathbb{T}^3)) \cap L^2 ((0,T);H^1(\mathbb{T}^3))$) of equation \eqref{advectiondiffdivB}. By Theorem 2.7 in \cite{bonicatto2023}, there exists a unique parabolic solution to equation \eqref{advectiondiffdivB} if $u \in L^2 (\mathbb{T}^3 \times (0,T))$. As this condition is satisfied for Leray-Hopf weak solutions, we conclude that the magnetic field must remain divergence-free if it is divergence-free initially. Notice that it is crucial for this argument that $\Div B \in L^2 ((0,T); H^1 (\mathbb{T}^3))$.
\end{proof}
\begin{remark}
We note that Theorem \ref{lerayhopfdivergence} can be extended to include a forcing in the induction equation of the viscous and resistive MHD equations, but it is crucial that this forcing is divergence-free. Otherwise, even for data with divergence-free initial magnetic field for which a smooth solution exists, the magnetic field need not remain divergence-free over time. Moreover, we also remark that once one establishes that $\Div B = 0$ for any given Leray-Hopf weak solution, then one can conclude that the spatial means of $u$ and $B$ are invariant.
\end{remark}
\begin{remark}
By using the same approach as the proof of Theorem 2.7 in \cite{bonicatto2023}, one finds that for Leray-Hopf weak solutions (in the sense of Definition \ref{viscoussolutiondef}) with $\Div B_0 \neq 0$ that $\Div B$ satisfies the energy equality (for almost all $t \in (0,T)$)
\begin{equation}
\frac{1}{2} \lVert \Div B (\cdot, t) \rVert_{L^2}^2 + \nu_2 \int_0^t \lVert \nabla \Div B (\cdot, t') \rVert_{L^2}^2 dt' = \frac{1}{2} \lVert \Div B_0 \rVert_{L^2}^2.
\end{equation}
\end{remark}
Finally, we show that weak solutions of the ideal MHD equations that arise as weak-$*$ limits of Leray-Hopf weak solutions, have the property that the divergence-free condition for $B$ is preserved over time.
\begin{corollary} \label{viscositylimitcorollary}
Any weak solution $(u,B)$ of the ideal MHD equations (in the sense of Definition \ref{newweaksolMHDdef}) which satisfies $\Div u_0 = \Div B_0 = 0$ and arises as a vanishing viscosity and resistivity weak-$*$ limit in $L^\infty ((0,T); L^2 (\mathbb{T}^3))$ of a sequence of Leray-Hopf weak solutions with divergence-free initial data for the magnetic field, will have the property that $\nabla \cdot B = 0$ in a weak sense. 
\end{corollary}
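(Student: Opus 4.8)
The plan is to combine Theorem \ref{lerayhopfdivergence} with the weak-$*$ convergence that defines the limiting solution; no delicate estimates are needed. By hypothesis, $(u,B)$ arises as a weak-$*$ limit in $L^\infty ((0,T); L^2 (\mathbb{T}^3))$ of a sequence $\{ (u^k, B^k) \}_k$ of Leray-Hopf weak solutions of the viscous and resistive MHD equations (in the sense of Definition \ref{viscoussolutiondef}), with viscosities and resistivities $\nu_1^k, \nu_2^k \rightarrow 0$ and with divergence-free initial data $\Div B_0 = 0$. The first step is to apply Theorem \ref{lerayhopfdivergence} to each member of the sequence, which gives $\Div B^k \equiv 0$ for every $k$. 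Writing this divergence-free property in weak form, for every $\Phi \in \mathcal{D} (\mathbb{T}^3 \times (0,T))$ we have
\begin{equation*}
\int_0^T \int_{\mathbb{T}^3} B^k \cdot \nabla \Phi \, dx \, dt = 0.
\end{equation*}

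The second step is to pass to the limit along the subsequence realising the weak-$*$ convergence. Since $\Phi$ is smooth with compact support in time, its gradient $\nabla \Phi$ belongs to $L^1 ((0,T); L^2 (\mathbb{T}^3))$, which is exactly the predual of $L^\infty ((0,T); L^2 (\mathbb{T}^3))$. Hence $\nabla \Phi$ is an admissible test object for the weak-$*$ convergence $B^k \rightharpoonup B$, and we obtain
\begin{equation*}
\int_0^T \int_{\mathbb{T}^3} B \cdot \nabla \Phi \, dx \, dt = \lim_{k \rightarrow \infty} \int_0^T \int_{\mathbb{T}^3} B^k \cdot \nabla \Phi \, dx \, dt = 0.
\end{equation*}
As this holds for all $\Phi \in \mathcal{D} (\mathbb{T}^3 \times (0,T))$, we conclude that $\Div B = 0$ in the weak sense, which is precisely the assertion of the corollary.

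There is essentially no analytical obstacle at this stage, since the argument reduces to interchanging a fixed smooth test function with a weak-$*$ limit; the only point to check is that $\nabla \Phi$ lies in the correct predual, which is immediate from the smoothness and compact support of $\Phi$. I would emphasise that the substance of the result is really carried by Theorem \ref{lerayhopfdivergence}, whose proof exploits the extra parabolic regularity $\Div B \in L^2 ((0,T); H^1 (\mathbb{T}^3))$ supplied by the resistive term — regularity that is lost in the inviscid and irresistive limit. The content of the corollary is thus that, although a generic weak solution of the ideal system need not preserve the divergence-free constraint (the transport equation \eqref{divtransportequation} having non-unique solutions at this regularity), the constraint nonetheless survives the passage to the vanishing viscosity and resistivity limit, so that this limiting class of solutions is automatically selected to be divergence-free.
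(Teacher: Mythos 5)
Your proposal is correct and follows essentially the same route as the paper: apply Theorem \ref{lerayhopfdivergence} to each Leray--Hopf solution in the approximating sequence, express $\Div B^\nu = 0$ in weak form against a fixed test function, and pass to the weak-$*$ limit. The only difference is that you spell out the predual justification ($\nabla \Phi \in L^1((0,T);L^2(\mathbb{T}^3))$), which the paper leaves implicit.
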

\begin{proof}
Let $\{ (u^\nu, B^\nu, \Div B^\nu ) \}$ be a sequence of Leray-Hopf weak solutions with corresponding viscosities and resistivities $\nu \rightarrow 0$, which converges weak-$*$ to $(u, B, \Div B)$ in $L^\infty ((0,T); L^2 (\mathbb{T}^3))$. Note that $L^\infty ((0,T); L^2 (\mathbb{T}^3))$ is the natural function space to consider weak-$*$ convergence, as the conservation laws (for the energy) imply bounds on weak solutions at this regularity level. For any $\nu > 0$ and for all $\Phi \in \mathcal{D} (\mathbb{T}^3 \times (0,T))$, by Theorem \ref{lerayhopfdivergence} it holds that
\begin{equation*}
\int_0^T \int_{\mathbb{T}^3} B^\nu \cdot \nabla \Phi dx dt = 0.
\end{equation*}
Then by passing to the weak-$*$ limit in this integral we conclude that
\begin{equation*}
\int_0^T \int_{\mathbb{T}^3} B \cdot \nabla \Phi dx dt = 0,
\end{equation*}
and hence we conclude that $B$ is weakly divergence-free. 
\end{proof}
\section{Conclusion} \label{conclusion}
In this work, we have studied the conservation of helicity. To establish sufficient conditions for helicity conservation we have extended a new definition of weak solutions that was originally introduced in \cite{boutroshydrostatic} to the hydrostatic Euler equations. The solutions introduced in Definition \ref{functionalvorticity} in this paper have been called functional vorticity solutions. For weak solutions of this type the multiplication between the velocity and vorticity fields in the nonlinear terms in the vorticity equation is interpreted as a paraproduct when the vorticity is only a functional (with respect to the spatial variables). 

This approach has made it possible to establish an equation of local helicity balance for the Euler equations. To the best knowledge of the authors, this allows for the first rigorous definition of local helicity fluxes and densities at this low level of regularity. Moreover, a sufficient condition for helicity conservation was obtained which is more general than many of the existing criteria in the literature. 

Based on the local helicity balance, a scaling law for a third-order structure function was proven. In addition, a sufficient condition for helicity conservation in the inviscid limit was derived. It would be interesting to see if there are further connections between the approach taken in this paper of using functional vorticity solutions and existing results in the study of helical turbulence. 

In addition, we have studied the MHD equations and the kinematic dynamo model. Using paradifferential calculus, we have established new sufficient criteria for weak solutions to conserve magnetic helicity. We have also introduced an analogue of functional vorticity solutions for the kinematic dynamo model. 

Finally, we have studied the problem of whether the magnetic field remains divergence-free under the evolution of the MHD equations. We have proven the global existence of Leray-Hopf weak solutions to include the case when the magnetic field is not divergence-free. We then proved that for Leray-Hopf weak solutions for which the initial data of the magnetic field is divergence-free, the magnetic field will remain divergence-free. Then we used this property to conclude that weak solutions of the ideal MHD equations which arise as vanishing viscosity and resistivity limits, will have the feature that the divergence-free property of the magnetic field is preserved in time.
\section*{Acknowledgements}
D.W.B. acknowledges support from the Cambridge Trust, the Cantab Capital Institute for Mathematics of Information and the Prince Bernhard Culture fund. The authors have benefited from the inspiring environment of the CRC
1114 ``Scaling Cascades in Complex Systems'', Project Number 235221301,
Project C06, funded by Deutsche Forschungsgemeinschaft (DFG). Moreover,
this work was also supported in part by the DFG Research Unit FOR 5528 on
Geophysical Flows.
\begin{appendices}
\section{Paradifferential calculus inequalities} \label{paradifferentialappendix}
For the convenience of the reader, we briefly recall some inequalities from Bony's paradifferential calculus. Further details can be found in \cite{bahouri,boutroshydrostatic}.
\begin{lemma} \label{paraproductlemma}
Let $1 \leq p_1, p_2, p, q_1, q_2, q \leq \infty$, $\beta < 0 < \alpha$, $\theta > 0$ and
\begin{equation*}
\frac{1}{p} = \frac{1}{p_1} + \frac{1}{p_2}.
\end{equation*}
Then we have the following inequalities:
\begin{itemize}
    \item If $f \in B^\alpha_{p_1,q_1} (\mathbb{T}^3)$, $g \in B^\beta_{p_2,q_2} (\mathbb{T}^3)$ and $\alpha + \beta > 0$ then
    \begin{equation} \label{paradiffineq1}
        \lVert fg \rVert_{B^\beta_{p,q_2}} \lesssim \lVert f \rVert_{B^\alpha_{p_1,q_1}} \lVert g \rVert_{B^\beta_{p_2,q_2}}.
    \end{equation}
    \item For $f \in B^\alpha_{p_1,q} (\mathbb{T}^3)$ and $g \in B^\alpha_{p_2,q} (\mathbb{T}^3)$ we have that
    \begin{equation} \label{paradiffineq2}
        \lVert fg \rVert_{B^\alpha_{p,q}} \lesssim \lVert f \rVert_{B^\alpha_{p_1,q}} \lVert g \rVert_{B^\alpha_{p_2,q}}.
    \end{equation}
    \item For $f \in B^\theta_{p_1,q_1} (\mathbb{T}^3) \cap B^{\alpha + \theta}_{p_2,q_2} (\mathbb{T}^3)$
    \begin{equation} \label{paradiffineq3}
        \lVert f^2 \rVert_{B^\alpha_{p,q}} \lesssim \lVert f \rVert_{B^\theta_{p_1,q_1}} \lVert f \rVert_{B^{\alpha+\theta}_{p_2,q_2}}.
    \end{equation}
\end{itemize}
\end{lemma}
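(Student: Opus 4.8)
The plan is to prove all three inequalities by means of Bony's paraproduct decomposition
\[
fg = T_f g + T_g f + R(f,g),
\]
where $T_f g = \sum_j S_{j-1} f \, \Delta_j g$ with $S_{j-1} = \sum_{k \le j-2}\Delta_k$ the low-frequency cutoff, and $R(f,g) = \sum_{|j-k|\le 1}\Delta_j f\,\Delta_k g$ is the high-high remainder. The two building blocks I would invoke (and, where needed, establish by a one-line Littlewood--Paley computation) are: first, the paraproduct bounds, that for a low-frequency factor of negative regularity $t<0$ one has $\|T_f g\|_{B^{s+t}_{p,q}}\lesssim \|f\|_{B^t_{p_1,\infty}}\|g\|_{B^s_{p_2,q}}$, while for $f\in L^{p_1}$ one has $\|T_f g\|_{B^s_{p,q}}\lesssim \|f\|_{L^{p_1}}\|g\|_{B^s_{p_2,q}}$; and second, the remainder bound, that whenever $s_1+s_2>0$ one has $\|R(f,g)\|_{B^{s_1+s_2}_{p,q}}\lesssim \|f\|_{B^{s_1}_{p_1,q_1}}\|g\|_{B^{s_2}_{p_2,q_2}}$. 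Both follow from the almost-orthogonality of the frequency supports together with Bernstein's inequality and Hölder in $x$, followed by the $\ell^q$ summation in the dyadic index, the constraint $\frac1p=\frac1{p_1}+\frac1{p_2}$ being exactly what Hölder requires. I would also use the elementary Besov embeddings $B^{s'}_{p,r'}\hookrightarrow B^{s}_{p,r}$ for arbitrary third indices $r,r'$ whenever $s'>s$, and $B^\alpha_{p,q}\hookrightarrow L^p$ for $\alpha>0$.

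For \eqref{paradiffineq1}, with $\beta<0<\alpha$ and $\alpha+\beta>0$, I would estimate the three pieces separately. Since $\alpha>0$ gives $f\in B^\alpha_{p_1,q_1}\hookrightarrow L^{p_1}$, the first paraproduct obeys $\|T_f g\|_{B^\beta_{p,q_2}}\lesssim \|f\|_{L^{p_1}}\|g\|_{B^\beta_{p_2,q_2}}$, and this is the term that produces exactly the target smoothness $\beta$ and third index $q_2$ (both inherited from $g$). The second paraproduct places the negative-regularity factor $g$ at low frequencies, so that $\|S_{j-1}g\|_{L^{p_2}}\lesssim 2^{-j\beta}\|g\|_{B^\beta_{p_2,\infty}}$ (the geometric series $\sum_{k\le j}2^{-k\beta}$ being dominated by its top term because $\beta<0$), yielding $T_g f\in B^{\alpha+\beta}_{p,q_1}$; since $\alpha+\beta>\beta$ this embeds into $B^\beta_{p,q_2}$. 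Finally, the hypothesis $\alpha+\beta>0$ is precisely what makes the remainder converge, giving $R(f,g)\in B^{\alpha+\beta}_{p,\cdot}\hookrightarrow B^\beta_{p,q_2}$. Summing the three contributions proves \eqref{paradiffineq1}.

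Inequality \eqref{paradiffineq2} is the product law in the algebra regime $\alpha>0$ and follows the same template: both paraproducts are controlled via $\|T_f g\|_{B^\alpha_{p,q}}\lesssim\|f\|_{L^{p_1}}\|g\|_{B^\alpha_{p_2,q}}$ and its symmetric counterpart (using $B^\alpha_{p_i,q}\hookrightarrow L^{p_i}$), while the remainder lands in $B^{2\alpha}_{p,\cdot}\hookrightarrow B^\alpha_{p,q}$ because $2\alpha>0$. For \eqref{paradiffineq3} I write $f^2 = 2T_f f + R(f,f)$ and exploit the freedom to send the two copies of $f$ into the two distinct spaces: in $T_f f$ I put the low-frequency factor $S_{j-1}f$ in $L^{p_1}$ (via $B^\theta_{p_1,q_1}\hookrightarrow L^{p_1}$, using $\theta>0$) and the high-frequency factor $\Delta_j f$ in $B^{\alpha+\theta}_{p_2,q_2}$; the weight bookkeeping $2^{j\alpha}\cdot 2^{-j(\alpha+\theta)}=2^{-j\theta}$ then leaves a factor summable in $j$ because $\theta>0$, giving $\|T_f f\|_{B^\alpha_{p,q}}\lesssim\|f\|_{B^\theta_{p_1,q_1}}\|f\|_{B^{\alpha+\theta}_{p_2,q_2}}$ for any third index $q$. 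The remainder is handled by the remainder bound with smoothness indices $\theta$ and $\alpha+\theta$, landing in $B^{\alpha+2\theta}_{p,\cdot}\hookrightarrow B^\alpha_{p,q}$.

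The routine verifications are the dyadic summations and Bernstein applications; the only genuinely delicate points are the index bookkeeping in the negative-regularity paraproduct $T_g f$ and in the remainder $R$. Here I must check that the geometric series with $\beta<0$ is dominated by its top term, and — crucially — that the resulting higher-smoothness spaces $B^{\alpha+\beta}$, $B^{2\alpha}$, $B^{\alpha+2\theta}$ embed back into the stated target. Because the embedding $B^{s'}_{p,r'}\hookrightarrow B^s_{p,r}$ for $s'>s$ holds with \emph{arbitrary} third indices, no compatibility condition on $q_1,q_2,q$ beyond those hypothesised is needed: only the leading-order paraproduct, which alone determines the output smoothness, must match the target third index, and in each case it does.
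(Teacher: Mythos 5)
Your proof is correct, and it follows the standard route: the paper itself states this lemma without proof, simply citing \cite{bahouri,boutroshydrostatic}, and those references establish exactly these estimates via the Bony decomposition $fg = T_f g + T_g f + R(f,g)$ with the same three ingredients you use (the $L^{p_1}$-paraproduct bound, the negative-regularity paraproduct bound, and the remainder bound under $s_1+s_2>0$). Your index bookkeeping is sound throughout -- in particular the observation that every non-leading term lands in a space of strictly higher smoothness, so the embedding $B^{s'}_{p,r'}\hookrightarrow B^{s}_{p,r}$ for $s'>s$ absorbs all third-index mismatches -- so nothing further is needed.
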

In addition, we recall the following lemma.
\begin{lemma}[Lemma 4.13 in \cite{boutroshydrostatic}] \label{diffquotientlemma}
Let $s, \epsilon > 0$ such that $s + \epsilon < 1$, and let $1 \leq p \leq \infty$. Then for all $f \in B^{s+\epsilon}_{p,\infty} (\mathbb{T}^3)$ and $\xi \in \mathbb{R}^3 \backslash \{ 0 \}$ it holds that
\begin{equation}
\lVert \delta f (\xi;\cdot) \rVert_{B^s_{p,\infty}} \lesssim \lvert \xi \rvert^\epsilon \lVert f \rVert_{B^{s+\epsilon}_{p,\infty}}.
\end{equation}
\end{lemma}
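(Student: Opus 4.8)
The plan is to argue directly from the Littlewood--Paley characterisation of the Besov norm, namely $\lVert g \rVert_{B^s_{p,\infty}} = \sup_{j \geq -1} 2^{js} \lVert \Delta_j g \rVert_{L^p}$, together with the elementary observation that the translation operator commutes with each Littlewood--Paley projection $\Delta_j$ (both being Fourier multipliers). Consequently $\Delta_j (\delta f (\xi;\cdot)) = \delta (\Delta_j f) (\xi;\cdot)$, and it suffices to estimate $\lVert \delta (\Delta_j f)(\xi;\cdot) \rVert_{L^p}$ for each frequency block. First I would record two complementary bounds. On the one hand, by the triangle inequality and translation invariance of the $L^p$ norm,
\begin{equation*}
\lVert \delta (\Delta_j f)(\xi;\cdot) \rVert_{L^p} \leq 2 \lVert \Delta_j f \rVert_{L^p}.
\end{equation*}
On the other hand, writing $\delta (\Delta_j f)(\xi;x) = \int_0^1 \xi \cdot \nabla (\Delta_j f)(x + t\xi)\,dt$ and applying Minkowski's inequality followed by Bernstein's inequality $\lVert \nabla \Delta_j f \rVert_{L^p} \lesssim 2^j \lVert \Delta_j f \rVert_{L^p}$, one obtains
\begin{equation*}
\lVert \delta (\Delta_j f)(\xi;\cdot) \rVert_{L^p} \leq \lvert \xi \rvert \lVert \nabla \Delta_j f \rVert_{L^p} \lesssim \lvert \xi \rvert\, 2^j \lVert \Delta_j f \rVert_{L^p}.
\end{equation*}
Combining the two gives, up to an absolute constant, $\lVert \Delta_j (\delta f) \rVert_{L^p} \lesssim \min(1, \lvert \xi \rvert 2^j)\, \lVert \Delta_j f \rVert_{L^p}$.

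Next I would insert the regularity assumption on $f$ in the form $\lVert \Delta_j f \rVert_{L^p} \leq 2^{-j(s+\epsilon)} \lVert f \rVert_{B^{s+\epsilon}_{p,\infty}}$, which is immediate from the definition of the $B^{s+\epsilon}_{p,\infty}$ norm. Multiplying the combined bound by $2^{js}$ then yields
\begin{equation*}
2^{js} \lVert \Delta_j (\delta f) \rVert_{L^p} \lesssim \min(1, \lvert \xi \rvert 2^j)\, 2^{-j\epsilon} \lVert f \rVert_{B^{s+\epsilon}_{p,\infty}}.
\end{equation*}
The result follows from the elementary optimisation $\sup_{j} \min(1, \lvert \xi \rvert 2^j)\, 2^{-j\epsilon} \lesssim \lvert \xi \rvert^\epsilon$. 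To see this I would split the supremum at the frequency $2^j \approx \lvert \xi \rvert^{-1}$: in the low-frequency regime $\lvert \xi \rvert 2^j \leq 1$ the quantity equals $\lvert \xi \rvert\, 2^{j(1-\epsilon)}$, which (since $\epsilon < 1$) increases in $j$ up to the value $\lvert \xi \rvert^\epsilon$ at the crossover; in the high-frequency regime $\lvert \xi \rvert 2^j \geq 1$ it equals $2^{-j\epsilon}$, which decreases in $j$ from the same value $\lvert \xi \rvert^\epsilon$. Taking the supremum over $j \geq -1$ and recalling the definition of the $B^s_{p,\infty}$ norm of $\delta f$ then gives exactly the claimed estimate.

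The low-frequency block $\Delta_{-1}$ requires no separate treatment, since both building-block bounds hold for it as well (Bernstein's inequality at $j = -1$ contributes only a harmless constant), so it is subsumed in the supremum over $j \geq -1$. The argument is essentially a frequency-localised interpolation between the crude translation bound and the smoothness gained from $\epsilon$ derivatives, and there is no genuine obstacle: the only point demanding mild care is the uniformity in $\xi$ of the constant in the optimisation step, which is secured by the hypothesis $\epsilon \in (0,1)$ guaranteeing that both pieces of $\min(1, \lvert \xi \rvert 2^j)\, 2^{-j\epsilon}$ peak at the common value $\lvert \xi \rvert^\epsilon$. For $\lvert \xi \rvert \gtrsim 1$ the estimate is in any case immediate from the crude bound together with the embedding $B^{s+\epsilon}_{p,\infty} \hookrightarrow B^{s}_{p,\infty}$.
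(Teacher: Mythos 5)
Your proof is correct and complete. Note that the paper itself gives no argument for this lemma --- it is imported verbatim as Lemma 4.13 of \cite{boutroshydrostatic} --- so there is no internal proof to compare against; what you have written is the standard frequency-localised proof of exactly this statement: commute the translation with each projection $\Delta_j$ (both are Fourier multipliers), interpolate between the crude bound $2 \lVert \Delta_j f \rVert_{L^p}$ and the mean-value/Bernstein bound $\lvert \xi \rvert \, 2^j \lVert \Delta_j f \rVert_{L^p}$, and optimise $\min(1, \lvert \xi \rvert 2^j) \, 2^{-j\epsilon} \lesssim \lvert \xi \rvert^\epsilon$ at the crossover $2^j \approx \lvert \xi \rvert^{-1}$. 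The points that require care are all handled: the commutation step, the $\Delta_{-1}$ block (where Bernstein costs only a constant), uniformity of the constant for $\lvert \xi \rvert \gtrsim 1$, and the fact that $\epsilon < 1$ is what makes the low-frequency piece $\lvert \xi \rvert \, 2^{j(1-\epsilon)}$ peak at the crossover value $\lvert \xi \rvert^\epsilon$. One small observation worth recording: your argument uses only $0 < \epsilon < 1$ and places no restriction on $s$; the hypothesis $s + \epsilon < 1$ in the statement is never invoked, so you have in fact proved a slightly more general result than the one quoted.
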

Now we prove mollification estimates for the case of negative Besov spaces.
\begin{lemma} \label{mollificationbesovlemma}
Let $0 < \alpha_1 < \alpha_2 < 1$ and let $1 \leq p, q < \infty$. Then if $f \in B^{-\alpha_1}_{p,q} (\mathbb{T}^3)$ it satisfies (where we recall that $f^\epsilon$ is the mollification of $f$)
\begin{equation} \label{mollificationbesovineq1}
\lim_{\epsilon \rightarrow 0} \lVert f^\epsilon - f  \rVert_{B^{-\alpha_1}_{p,q}} = 0.
\end{equation}
Moreover, the function $f$ also satisfies the following inequality
\begin{equation} \label{mollificationbesovineq2}
\lVert f^\epsilon - f  \rVert_{B^{-\alpha_2}_{p,1}} \lesssim \epsilon^{\alpha_2 - \alpha_1} \lVert f \rVert_{B^{-\alpha_1}_{p,\infty}}.
\end{equation}
In addition, if $p \in (1,\infty)$ it holds that
\begin{equation} \label{mollificationbesovineq3}
\lVert f^\epsilon \rVert_{L^p} \lesssim \epsilon^{-\alpha_1} \lVert f \rVert_{B^{-\alpha_1}_{p,\infty}}.
\end{equation}
Finally, for any $0 < \alpha_3 < 1 - \alpha_1$ the following inequality also holds
\begin{equation} \label{mollificationbesovineq4}
\lVert f^\epsilon \rVert_{B^{\alpha_3}_{p,1}} \lesssim \epsilon^{-\alpha_1 - \alpha_3} \lVert f \rVert_{B^{-\alpha_1}_{p,\infty}}.
\end{equation}
\end{lemma}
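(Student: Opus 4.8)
The plan is to carry out every estimate on the Littlewood--Paley side, exploiting that the mollification $f \mapsto f^\epsilon = \phi_\epsilon * f$ is the Fourier multiplier with symbol $m_\epsilon(\xi) = \widehat{\phi}(\epsilon \xi)$, where $\widehat{\phi}$ is Schwartz, radial, and normalised so that $\widehat{\phi}(0) = 1$. Since $\Delta_j$ commutes with convolution, one has $\Delta_j f^\epsilon = \phi_\epsilon * \Delta_j f$, so the whole proof reduces to two per-block estimates together with the natural frequency split at $2^j \sim \epsilon^{-1}$. First I would establish, for every $1 \le p \le \infty$: (a) the low-frequency bounds
$$\lVert \Delta_j f^\epsilon \rVert_{L^p} \lesssim \lVert \Delta_j f \rVert_{L^p}, \qquad \lVert \Delta_j (f^\epsilon - f) \rVert_{L^p} \lesssim \epsilon 2^j \, \lVert \Delta_j f \rVert_{L^p},$$
the first by Young's inequality (as $\lVert \phi_\epsilon \rVert_{L^1} = 1$) and the second from $\lvert m_\epsilon(\xi) - 1 \rvert \lesssim \epsilon \lvert \xi \rvert$ on the support of $\Delta_j$; and (b) the high-frequency rapid decay, valid for every $N$ when $\epsilon 2^j \gtrsim 1$,
$$\lVert \Delta_j f^\epsilon \rVert_{L^p} \lesssim_N (\epsilon 2^j)^{-N} \lVert \Delta_j f \rVert_{L^p},$$
coming from the fast decay of $\widehat{\phi}$ away from the origin. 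These block estimates are obtained by writing $\phi_\epsilon * \Delta_j f = (\phi_\epsilon * \widetilde{\psi}_j) * \Delta_j f$ with $\widetilde{\psi}_j$ a fattened block kernel and bounding $\lVert \phi_\epsilon * \widetilde{\psi}_j \rVert_{L^1}$ by rescaling and integration by parts.

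Granting (a)--(b), inequalities \eqref{mollificationbesovineq2} and \eqref{mollificationbesovineq4} follow from elementary geometric-series bookkeeping. For \eqref{mollificationbesovineq2} I would split $\lVert f^\epsilon - f \rVert_{B^{-\alpha_2}_{p,1}} = \sum_j 2^{-j\alpha_2}\lVert \Delta_j(f^\epsilon - f)\rVert_{L^p}$, use the low-frequency gain $\epsilon 2^j$ for $2^j \le \epsilon^{-1}$ and the crude bound $\lVert \Delta_j(f^\epsilon - f)\rVert_{L^p} \lesssim \lVert \Delta_j f \rVert_{L^p}$ for $2^j > \epsilon^{-1}$, inserting $\lVert \Delta_j f \rVert_{L^p} \le 2^{j\alpha_1}\lVert f \rVert_{B^{-\alpha_1}_{p,\infty}}$ throughout; the two sums are dominated by their endpoint terms at $2^j \sim \epsilon^{-1}$ (using $\alpha_2 - \alpha_1 < 1$ for the low part and $\alpha_2 > \alpha_1$ for the high part), each contributing exactly $\epsilon^{\alpha_2 - \alpha_1}$. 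Inequality \eqref{mollificationbesovineq4} is identical in spirit: the low-frequency sum $\sum_{2^j \le \epsilon^{-1}} 2^{j\alpha_3}\lVert \Delta_j f^\epsilon \rVert_{L^p}$ is dominated by its top term $\sim \epsilon^{-(\alpha_1 + \alpha_3)}$, while the rapid-decay bound (b) makes the high-frequency sum convergent and of the same order once $N > \alpha_1 + \alpha_3$.

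For \eqref{mollificationbesovineq1} I would argue by dominated convergence for series: writing the norm as $\sum_j 2^{-jq\alpha_1}\lVert \Delta_j(f^\epsilon - f)\rVert_{L^p}^q$, each summand tends to $0$ as $\epsilon \to 0$ because $\Delta_j f \in L^p$ and mollification converges in $L^p$, while the summable majorant $2^q \, 2^{-jq\alpha_1}\lVert \Delta_j f \rVert_{L^p}^q = 2^q \lVert f \rVert_{B^{-\alpha_1}_{p,q}}^q$ is furnished by (a); here the hypothesis $q < \infty$ is essential. Finally, for \eqref{mollificationbesovineq3} I would bound $\lVert f^\epsilon \rVert_{L^p}$ through the embedding $B^0_{p,1} \hookrightarrow L^p$ (or, equivalently for $1 < p < \infty$, the Littlewood--Paley square-function characterisation, which is exactly where the restriction $p \in (1,\infty)$ enters), reducing it to $\sum_j \lVert \Delta_j f^\epsilon \rVert_{L^p}$; the low-frequency part sums to $\sim \epsilon^{-\alpha_1}$ and the high-frequency part is again controlled by the rapid decay (b).

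The hard part will be the clean justification of the per-block multiplier estimates (a)--(b), and in particular the high-frequency rapid decay: one must convert the pointwise symbol bounds on $m_\epsilon$ into $L^p$-operator bounds uniformly in $j$. The slickest route invokes the Mikhlin--H\"ormander theorem or the square-function equivalence, both of which require $1 < p < \infty$; an alternative that works for all $p$ is the convolution-kernel estimate sketched above, at the cost of a short scaling-and-integration-by-parts computation. Everything else is routine summation, so the real content is packaging these two block estimates in a form uniform in $\epsilon$ and $j$.
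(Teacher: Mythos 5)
Your proposal is correct, and for three of the four inequalities it takes a genuinely different route from the paper. For \eqref{mollificationbesovineq1} you and the paper do exactly the same thing: commute the mollification with the Littlewood--Paley blocks, use the uniform bound $\lVert \Delta_j f^\epsilon \rVert_{L^p} \lesssim \lVert \Delta_j f \rVert_{L^p}$ and $L^p$-convergence of mollification blockwise, and conclude by dominated convergence for series (this is where $q<\infty$ enters in both arguments). For \eqref{mollificationbesovineq2}--\eqref{mollificationbesovineq4}, however, the paper never works on the frequency side: it invokes the duality characterisation of negative Besov norms (Proposition 2.76 in Bahouri--Chemin--Danchin) to throw the mollifier onto a smooth test function $\rho$, proves the positive-regularity estimate $\lVert \rho^\epsilon - \rho \rVert_{B^{\alpha_1}_{p',1}} \lesssim \epsilon^{\alpha_2-\alpha_1}\lVert \rho \rVert_{B^{\alpha_2}_{p',\infty}}$ by interpolation (Theorem 2.80 there), uses the $L^p$--$L^{p'}$ duality pairing for \eqref{mollificationbesovineq3} (which is precisely where its restriction $p \in (1,\infty)$ comes from), and deduces \eqref{mollificationbesovineq4} from \eqref{mollificationbesovineq3} via the lifting operator $(I+(-\Delta)^{1/2})^{\alpha_3+\theta}$ and another interpolation, which is where the hypothesis $\alpha_3 < 1-\alpha_1$ is consumed. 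Your route replaces all of this by two per-block multiplier bounds (the low-frequency gain $\epsilon 2^j$ and the high-frequency rapid decay $(\epsilon 2^j)^{-N}$) plus dyadic summation split at $2^j \sim \epsilon^{-1}$. What this buys is uniformity and, in fact, slightly stronger conclusions: your argument proves \eqref{mollificationbesovineq4} for \emph{every} $\alpha_3 > 0$ with no upper restriction, and your primary route to \eqref{mollificationbesovineq3} via $B^0_{p,1} \hookrightarrow L^p$ (triangle inequality on the block decomposition) needs no restriction on $p$ at all --- your aside attributing the restriction $p\in(1,\infty)$ to the square-function characterisation is a harmless misattribution, since you do not actually need that characterisation. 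What the paper's route buys is that every step is a citation to a standard reference, whereas the burden of your proof sits in the high-frequency kernel bound (b): as you correctly flag, the pointwise decay of $\widehat{\phi}$ alone is not an $L^p$ operator bound, and one genuinely needs the $L^1$ estimate on $\phi_\epsilon * \widetilde{\psi}_j$, obtainable either by integration by parts in the Fourier integral (using that derivatives of the symbol cost at most $\epsilon$ and the block has volume $2^{3j}$) or, perhaps most cleanly, by Taylor-expanding $\phi_\epsilon$ and using that the fattened block kernel $\widetilde{\psi}_j$ ($j \ge 0$) has all vanishing moments, giving $\lVert \phi_\epsilon * \widetilde{\psi}_j \rVert_{L^1} \lesssim_N (\epsilon 2^j)^{-N}$. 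With that lemma written out (and the trivial $\Delta_{-1}$ block noted), your summation bookkeeping is correct as stated, including the use of $\alpha_2 - \alpha_1 < 1$ for the low-frequency part of \eqref{mollificationbesovineq2} and $\alpha_2 > \alpha_1$ for the high-frequency part.
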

\begin{proof}
We recall that for $q < \infty$ the Besov norm is given by (where $\Delta_j f$ are the Littlewood-Paley blocks of $f$, see \cite{bahouri,boutroshydrostatic} for further details)
\begin{equation*}
\lVert f \rVert_{B^{s}_{p,q}} \coloneqq \lVert \Delta_{-1} f \rVert_{L^p} + \bigg( \sum_{j=0}^\infty 2^{s j q} \lVert \Delta_j f \rVert_{L^p}^q \bigg)^{1/q}.
\end{equation*}
We then find the following
\begin{equation*}
\lVert f^\epsilon - f \rVert_{B^{s}_{p,q}} = \lVert (\Delta_{-1} f)^\epsilon - \Delta_{-1} f \rVert_{L^p} + \bigg( \sum_{j=0}^\infty 2^{s j q} \lVert (\Delta_j f)^\epsilon - \Delta_j f \rVert_{L^p}^q \bigg)^{1/q}.
\end{equation*}
We then recall the following properties for a function $g \in L^p (\mathbb{T}^3)$ with $p < \infty$
\begin{align}
&\lVert g^\epsilon \rVert_{L^p} \lesssim \lVert g \rVert_{L^p}, \label{Lpestimate} \\
&\lim_{\epsilon \rightarrow 0} \, \lVert g^\epsilon - g \rVert_{L^p} = 0. \label{Lpconvergence}
\end{align}
Using inequality \eqref{Lpestimate} we find that
\begin{equation*}
\lVert f^\epsilon - f \rVert_{B^{s}_{p,q}} \lesssim 2 \lVert f \rVert_{B^s_{p,q}}.
\end{equation*}
Therefore we can apply the Lebesgue dominated convergence theorem and find by using \eqref{Lpconvergence}
\begin{align*}
\lim_{\epsilon \rightarrow 0} \lVert f^\epsilon - f \rVert_{B^{s}_{p,q}} &= \lim_{\epsilon \rightarrow 0} \lVert (\Delta_{-1} f)^\epsilon - \Delta_{-1} f \rVert_{L^p} + \lim_{\epsilon \rightarrow 0} \bigg( \sum_{j=0}^\infty 2^{s j q} \lVert (\Delta_j f)^\epsilon - \Delta_j f \rVert_{L^p}^q \bigg)^{1/q} \\
&= \bigg( \sum_{j=0}^\infty \big[ \lim_{\epsilon \rightarrow 0} \big(2^{s j q} \lVert (\Delta_j f)^\epsilon - \Delta_j f \rVert_{L^p}^q \big) \big] \bigg)^{1/q} = 0,
\end{align*}
which proves \eqref{mollificationbesovineq1} for any $\alpha_1 \in \mathbb{R}$.

In order to prove inequality \eqref{mollificationbesovineq2}, we recall that by Proposition 2.76 in \cite{bahouri} we have the following bound on the Besov norm (where $Q^{\alpha}_{p,q}$ is the space of functions $\rho \in \mathcal{D} (\mathbb{T}^3)$ such that $\lVert \rho \rVert_{B^{\alpha}_{p,q}} \leq 1$)
\begin{equation} \label{besovnormbound}
\lVert f \rVert_{B^{-\alpha_2}_{p,q}} \lesssim \sup_{\rho \in Q^{\alpha_2}_{p',q'}} ( f , \rho )_{B^{-\alpha_2}_{p,q} \times B^{\alpha_2}_{p',q'}},
\end{equation}
where $p'$ and $q'$ are the Hölder conjugates of $p$ and $q$. The duality bracket $(\cdot, \cdot)$ is defined as follows 
\begin{equation}
(f,\rho)_{B^{-\alpha_2}_{p,q} \times B^{\alpha_2}_{p',q'}} \coloneqq \sum_{\lvert j - j' \rvert \leq 1} \int_{\mathbb{T}^3} \Delta_j f \Delta_{j'} \rho dx. 
\end{equation}
We also recall that in \cite[~Proposition 2.76]{bahouri} it is shown that $(\cdot, \cdot)$ is a continuous bilinear functional. In order to prove inequality \eqref{mollificationbesovineq2}, we first need to show that for all $g \in B^{\alpha_2}_{p',\infty} (\mathbb{T}^3)$ we have
\begin{equation} \label{mollificationbesov}
\lVert g^\epsilon - g \rVert_{B^{\alpha_1}_{p',1}} \lesssim \epsilon^{\alpha_2 - \alpha_1} \lVert g \rVert_{B^{\alpha_2}_{p',\infty}}.
\end{equation}
This inequality can be established by using Theorem 2.80 in \cite{bahouri} and hence find
\begin{align*}
\lVert g^\epsilon - g \rVert_{B^{\alpha_1}_{p',1}} \lesssim \lVert g^\epsilon - g \rVert_{B^0_{p',\infty}}^{1 - \frac{\alpha_1}{\alpha_2}} \lVert g^\epsilon - g \rVert_{B^{\alpha_2}_{p',\infty}}^{\frac{\alpha_1}{\alpha_2}} \lesssim \epsilon^{\alpha_2 - \alpha_1} \lVert g \rVert_{B^{\alpha_2}_{p',\infty}},
\end{align*}
where we have used Lemma \ref{diffquotientlemma}, or alternatively inequality (7) from \cite{constantin}, as well as Proposition 17.12 from \cite{leoni}. Now by using inequalities \eqref{besovnormbound} and \eqref{mollificationbesov} we find
\begin{align*}
\lVert f^\epsilon - f  \rVert_{B^{-\alpha_2}_{p,1}} &\lesssim \sup_{\rho \in Q^{\alpha_2}_{p',\infty}} ( f^\epsilon - f , \rho )_{B^{-\alpha_2}_{p,1} \times B^{\alpha_2}_{p',\infty}} = \sup_{\rho \in Q^{\alpha_2}_{p',\infty}} ( f, \rho^\epsilon - \rho )_{B^{-\alpha_1}_{p,\infty} \times B^{\alpha_1}_{p',1}} \\
&\lesssim \lVert f \rVert_{B^{-\alpha_1}_{p,\infty}} \sup_{\rho \in Q^{\alpha_2}_{p',\infty}} \lVert \rho^\epsilon - \rho \rVert_{B^{\alpha_1}_{p',1}} \lesssim \epsilon^{\alpha_2 - \alpha_1} \lVert f \rVert_{B^{-\alpha_1}_{p,\infty}} \sup_{\rho \in Q^{\alpha_2}_{p',\infty}} \lVert \rho \rVert_{B^{\alpha_2}_{p',\infty}} \lesssim \epsilon^{\alpha_2 - \alpha_1} \lVert f \rVert_{B^{-\alpha_1}_{p,\infty}},
\end{align*}
which concludes the proof of inequality \eqref{mollificationbesovineq2}. Now we establish inequality \eqref{mollificationbesovineq3}. We define $Q^p$ to be the space of functions $\rho \in L^p (\mathbb{T}^3)$ such that $\lVert \rho \rVert_{L^p} \leq 1$. By the duality characterisation of $L^p (\mathbb{T}^3)$ for $p \in (1,\infty)$ we find (where we use Lemma A.3 in \cite{boutrosnonuniqueness}, Theorem C.16 in \cite{leoni} and also Theorem 2.80 in \cite{bahouri})
\begin{align*}
\lVert f^\epsilon \rVert_{L^p} &= \sup_{\rho \in Q^{p'}} (f^\epsilon, \rho)_{L^p \times L^{p'}} = \sup_{\rho \in Q^{p'}} (f, \rho^\epsilon)_{B^{-\alpha_1}_{p,\infty} \times B^{\alpha_1}_{p',1}} \lesssim \sup_{\rho \in Q^{p'}} \big[ \lVert f \rVert_{B^{-\alpha_1}_{p,\infty}} \lVert \rho^\epsilon \rVert_{B^{\alpha_1}_{p',1}} \big] \\
&\lesssim \sup_{\rho \in Q^{p'}} \big[ \lVert f \rVert_{B^{-\alpha_1}_{p,\infty}} \lVert \rho^\epsilon \rVert_{B^0_{p',\infty}}^{1-\alpha_1} \lVert \rho^\epsilon \rVert_{B^1_{p',\infty}}^{\alpha_1} \big] \lesssim \sup_{\rho \in Q^{p'}} \big[ \lVert f \rVert_{B^{-\alpha_1}_{p,\infty}} \lVert \rho^\epsilon \rVert_{L^{p'}}^{1-\alpha_1} \lVert \rho^\epsilon \rVert_{W^{1,p'}}^{\alpha_1} \big] \\
&\lesssim \epsilon^{-\alpha_1} \lVert f \rVert_{B^{-\alpha_1}_{p,\infty}} \sup_{\rho \in Q^{p'}} \lVert \rho^\epsilon \rVert_{L^{p'}} \lesssim \epsilon^{-\alpha_1} \lVert f \rVert_{B^{-\alpha_1}_{p,\infty}},
\end{align*}
which is what we had to show. Finally, we turn to the proof of inequality \eqref{mollificationbesovineq4}. We first choose $\theta \in (0, \alpha_1)$ such that $\theta + \alpha_1 + \alpha_3 < 1$. Now by using equation (A.8) in \cite{boutrosfractional} we have that
\begin{align*}
\lVert f^\epsilon \rVert_{B^{\alpha_3}_{p,1}} &\simeq \lVert (I + (- \Delta)^{1/2})^{\alpha_3 + \theta} f^\epsilon \rVert_{B^{-\theta}_{p,1}} \\
&\lesssim \lVert (I + (- \Delta)^{1/2})^{\alpha_3 + \theta} f^\epsilon \rVert_{B^0_{p,\infty}}^{1 - \frac{\theta}{\alpha_1+\alpha_3+\theta}} \lVert (I + (- \Delta)^{1/2})^{\alpha_3 + \theta} f^\epsilon \rVert_{B^{-\alpha_1-\alpha_3-\theta}_{p,\infty}}^{\frac{\theta}{\alpha_1+\alpha_3+\theta}} \\
&\lesssim \epsilon^{-(\alpha_1+\alpha_3+\theta) \cdot \left( 1 - \frac{\theta}{\alpha_1+\alpha_3+\theta} \right)} \lVert (I + (- \Delta)^{1/2})^{\alpha_3 + \theta} f \rVert_{B^{-\alpha_1-\alpha_3-\theta}_{p,\infty}} \simeq \epsilon^{-\alpha_1 - \alpha_3} \lVert f \rVert_{B^{-\alpha_1}_{p,\infty}},
\end{align*}
where we have used Theorem 2.80 in \cite{bahouri}, the embedding $L^p (\mathbb{T}^3) \subset B^0_{p,\infty} (\mathbb{T}^3)$ as well as inequality \eqref{mollificationbesovineq3}.
\end{proof}
\end{appendices}
\footnotesize
\bibliographystyle{acm}
\bibliography{On_the_conservation_of_helicity}

\end{document}